\newtheorem{thm}{Theorem}[section]
\newtheorem{lemma}[thm]{Lemma}
\newtheorem{prop}[thm]{Proposition}
\newtheorem{coro}[thm]{Corollary}
\newtheorem{conj}[thm]{Conjecture}
\newtheorem{class}[thm]{Classification}
\theoremstyle{definition}
\newtheorem{defi}[thm]{Definition}
\newtheorem{rem}[thm]{Remark}
\newtheorem{con}[thm]{Construction}
\newtheorem{algo}[thm]{Algorithm}
\DeclareMathOperator{\Z}{\mathds{Z}}
\DeclareMathOperator{\Q}{\mathds{Q}}
\DeclareMathOperator{\R}{\mathds{R}}
\DeclareMathOperator{\Vol}{\mathrm{Vol}}
\DeclareMathOperator{\lw}{\mathrm{lw}}
\DeclareMathOperator{\ehr}{\mathrm{ehr}}
\DeclareMathOperator{\ldist}{\mathrm{ldist}}
\DeclareMathOperator{\length}{\mathrm{length}}
\DeclareMathOperator{\denom}{\mathrm{denom}}
\DeclareMathOperator{\width}{\mathrm{width}}
\DeclareMathOperator{\hnf}{\mathrm{hnf}}
\DeclareMathOperator{\unf}{\mathrm{unf}}
\DeclareMathOperator{\anf}{\mathrm{anf}}
\DeclareMathOperator{\sur}{\mathrm{sur}}
\DeclareMathOperator{\convhull}{\mathrm{convhull}}
\DeclareMathOperator{\GL}{\mathrm{GL}}
\DeclareMathOperator{\hilb}{\mathcal{H}}
\DeclareMathOperator{\ldp}{\mathrm{ldp}}
\DeclareMathOperator{\all}{\mathrm{all}}
\newcommand{\conv}[1]{\mathrm{convhull}\left(#1\right)}
\newcommand{\cone}{\mathrm{cone}}
\newcommand{\usim}{\sim^{\mathrm{u}}}
\newcommand{\asim}{\sim^{\mathrm{a}}}
\def\<{\langle}
\def\>{\rangle}
\begin{document}

\title[Rational polygons]{Classifying rational polygons with small denominator
	and few interior lattice points}

\author[Martin Bohnert]{Martin Bohnert}
\address{Mathematisches Institut, Universit\"at T\"ubingen,
	Auf der Morgenstelle 10, 72076 T\"ubingen, Germany}
\email{martin.bohnert@uni-tuebingen.de}

\author[Justus Springer]{Justus Springer}
\address{Mathematisches Institut, Universit\"at T\"ubingen,
	Auf der Morgenstelle 10, 72076 T\"ubingen, Germany}
\email{justus.springer@uni-tuebingen.de}

\begin{abstract}
	We present algorithms for classifying rational polygons with fixed
	denominator and number of interior lattice points. Our approach is
    to first describe maximal polygons and then compute all
    subpolygons, where we eliminate redundancy by a suitable normal
    form. Executing our classification, we obtained a dataset of
    several billions of polygons covering a wide variety of cases.
\end{abstract}

\maketitle

\thispagestyle{empty}

\section{Introduction}

A polytope is called \emph{\( k \)-rational}, if its \( k \)-fold
dilation is a lattice polytope, i.e. has integral vertices. Lagarias
and Ziegler \cite{LZ91} showed that for fixed number \( i \) of
interior lattice points, there exist up to lattice automorphism only
finitely many \( k \)-rational polytopes in each dimension, provided
\( i \geq 1 \). Even though this does not hold for \( i = 0 \),
Averkov, Wagner and Weismantel \cite{AWW11} showed that in this case
there are still only finitely \emph{maximal} \( k \)-rational
polytopes, i.e. which are not strictly included in another \( k
\)-rational polytope with equal number of interior lattice points.

In this article, we develop algorithms to classify polytopes in two
dimensions. We used Julia~\cite{Julia} to carry out the
classification and made the implementation available as part of a
software package on rational polygons~\cite{RationalPolygons_jl}.
Moreover, large parts of our data are available for
download~\cite{springer_2024_13838476,springer_2024_13839216,
	springer_2024_13960791, springer_2024_13838991,
	springer_2024_13928298, springer_2024_13959996}.

\begin{thm}
	\label{thm:main_classification}
	The numbers of \( k \)-rational polygons with \( i \) interior
	lattice points is given by the following table. Each cell contains the
	number of maximal polygons on top, the number of distinct Ehrhart
	quasipolynomials in the middle and the total number of polygons
	below. For \( i = 0 \), only the polygons which cannot be realized in
	\( \R \times [0,1] \) are counted.
    \begin{center}
		{\scriptsize
			\begin{tabular}{c|cccccccc}
				\( k\downarrow i\rightarrow \) & 0             & 1 & 2 & 3 & 4 & 5 & 6 &
				\dots                                                                     \\\hline
				1                               & \makecell{1                             \\ 1 \\ 1} & \makecell{3 \\ 7 \\ 16} &
				\makecell{4                                                               \\ 8 \\ 45}
				                                & \makecell{6                             \\ 10 \\120} &
				\makecell{9                                                               \\12\\211} & \makecell{11\\14\\403} &
				\makecell{13                                                              \\16\\714} &
				\dots                                                                     \\\hline
				2                               & \makecell{4                             \\ 34 \\79} & \makecell{10 \\ 270 \\5\,145} &
				\makecell{25                                                              \\ 586\\48\,639} &
				\makecell{33                                                              \\ 1\,060\\249\,540} & \makecell{63 \\ 1\,701\\893\,402} &
				\makecell{106                                                             \\ 2\,525\\2\,798\,638} & \makecell{178 \\ 3\,577\\7\,299\,589} &
				\dots                                                                     \\\hline
				3                               & \makecell{14                            \\ 803 \\6\,723} & \makecell{39 \\ 8\,124 \\ 924\,042} &
				\makecell{146                                                             \\ 25\,892\\17\,656\,886} &
				\makecell{303                                                             \\ 62\,347 \\156\,777\,687} & \makecell{687
				\\124\,021\\909\,056\,858} &
				\makecell{1\,452
                \\ 218\,169 \\4\,211\,988\,753} \\\hline
				4                               & \makecell{39                            \\ 18\,916 \\399\,294} & \makecell{145 \\ 320\,256
				\\101\,267\,212} & \makecell{670 \\ 1\,593\,475 \\ 3\,452\,922\,966}
				\\\hline
				5                               & \makecell{134                           \\ 253\,631 \\18\,935\,385} & \makecell{698
				\\ 5\,034\,566 \\8\,544\,548\,186}
				\\\hline
				6                               & \makecell{299                           \\ 8\,930\,335\\820\,697\,679}
				\\
			\end{tabular}
		}
    \end{center}
\end{thm}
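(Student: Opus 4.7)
The plan is to establish the table algorithmically, by combining three ingredients: an exhaustive enumeration of maximal $k$-rational polygons with $i$ interior lattice points, a descent step producing the subpolygons of each maximal polygon, and a normal form procedure that eliminates unimodular duplicates. Each cell is then populated by: counting the normalized maximal polygons; grouping the full normalized list by Ehrhart quasipolynomial; and returning the total cardinality of the list.

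First I would generate the maximal polygons. By the finiteness results of Lagarias--Ziegler \cite{LZ91} for $i \geq 1$, and of Averkov--Wagner--Weismantel \cite{AWW11} for $i = 0$ when restricted to polygons that are not $\GL_2(\Z)$-equivalent to a subset of a strip $\R \times [0,1]$, this set is finite modulo $\GL_2(\Z) \ltimes \Z^2$. For the enumeration to be provably complete, I would derive explicit upper bounds on the width and area of such a maximal polygon in terms of $k$ and $i$, reduce every candidate to a Hermite-type normal form of its vertex matrix in $\frac{1}{k}\Z^2$, and test maximality by checking that no additional $\frac{1}{k}$-lattice point can be added as a vertex without introducing a new interior lattice point.

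Next, for each maximal polygon $P$, I would enumerate all $k$-rational subpolygons with exactly $i$ interior lattice points of $\Z^2$. Their vertices all lie in the finite set $\frac{1}{k}\Z^2 \cap P$, so one can iterate over subsets in convex position; a careful search exploiting the interior-point constraint as a pruning criterion is needed, since naively this is combinatorially explosive. Since the same polygon can appear inside several maximal polygons, I would assign each enumerated polygon a canonical unimodular or affine normal form (cf.\ \unf, \anf\ in the paper's notation) and deduplicate via this key. Grouping the resulting list by the Ehrhart quasipolynomial -- a lattice invariant that is explicitly computable from the vertex data -- then yields the middle row of each cell.

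The main obstacle is not mathematical but computational: the total polygon counts run into the billions, so both the subpolygon search and the normal form routine must be genuinely efficient in order for the classification to terminate in reasonable time and memory. A secondary, equally delicate issue is trusting the output: I would safeguard the proof by internal consistency checks (idempotency of the normal form, invariance of the counts under reordering of the subpolygon enumeration, equality of maximal-polygon counts produced independently as ``polygons with no proper supertype'' in the total list), and by cross-validating the $k = 1$ row and the $i = 0, k \le 2$ entries against the classifications already available in the literature.
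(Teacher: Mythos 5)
Your high-level strategy --- enumerate maximal polygons, descend to subpolygons, deduplicate by a normal form, and group by Ehrhart quasipolynomial --- is exactly the skeleton of the paper's argument, but the two steps you leave as ``careful searches'' are precisely where the paper's actual content lies, and as stated both would fail at the scale of the table. For the subpolygon step, iterating over subsets of \( P \cap \frac{1}{k}\Z^2 \) in convex position is exponential in the number of rational points and is not salvaged by interior-point pruning; the paper instead proves that every subpolygon is reached by a chain of single-vertex deletions \( P \mapsto P \backslash v \) (Proposition~\ref{prp:subpolygons_exists_chain}), which turns the search into a traversal of the subpolygon poset ordered by normalized volume (Algorithm~\ref{algo:subpolygons}), with the convex hulls \( P \backslash v \) computed in closed form via Hilbert bases (Corollary~\ref{cor:vertices_of_removed_edge_polygon}). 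This chain lemma is the missing idea; without it the descent step is not an algorithm that terminates on billions of polygons.

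For the generation of maximal polygons, ``bound width and area and enumerate candidates'' is likewise not viable and is also not what the paper does. The paper obtains completeness from realizability statements in narrow strips (\( \R \times [-1,2] \) for \( i = 0 \) and \( \R \times [-2,2] \) for \( i = 1 \), via lattice-width estimates from \cite{Boh23}), and then parametrizes each candidate maximal polygon by a small number of supporting lines through prescribed lattice points (the surrounding-polygon constructions), so the search space is polynomial rather than a brute-force scan of all vertex sets in a box. For \( k = 2 \) it instead passes through Fine interiors of lattice \( 3 \)-polytopes, and for the remaining pairs \( (k,i) \) it reduces to Koelman's classification of lattice polygons by number of lattice points via \( P = \frac{1}{k}Q^{(-1)} \) (Proposition~\ref{prp:k_maximal_polygons_from_koelman}). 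Your maximality test also needs to be made finite: one cannot check that no additional \( \frac{1}{k} \)-point can be added over all of \( \frac{1}{k}\Z^2 \); the paper restricts the check to \( \partial P^{(-1)} \cap \frac{1}{k}\Z^2 \) (Proposition~\ref{prp:k_maximal_criterion}). Your internal consistency checks and the cross-validation against known classifications are sound and do match what the authors actually carried out.
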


The first row in the table above is the case of~\emph{lattice
	polygons}, which has already been studied by several authors.
Starting from the topleft, the unique maximal lattice polygon without
interior lattice points is the twofold standard lattice triangle, see
e.g. \cite[sec. 4.1]{Koe91}, \cite[§5]{Kho97} and \cite[Theorem 2]{Sch03}. The classification of the 16 lattice polygons with
exactly one interior lattice point has been done e.g. in
\cites{Bat85, Rab89} and \cite[sec. 4.2]{Koe91}. Lattice polygons
with collinear interior lattice points, which includes the case \( i
= 2 \), were classified by Koelman \cite[sec. 4.3]{Koe91} and a
general recursive algorithm was given by Castryck \cite{Cas12}, which
he ran up to \( i = 30 \). There are also algorithms classifying
lattice polygons by their number of lattice points \cite[sec.
	4.4]{Koe91} and by their area \cite{Bal21}. Pick's formula
\cite{Pic99} relates the area to the numbers of lattice and interior
lattice points, hence one can also obtain the classification by
interior lattice points from those algorithms.

To the authors knowledge, besides the first row in our table, only
the four maximal \( 2 \)-rational polygons without interior lattice
points have directly appeared in the literature before~\cite{AKW17}.
The classification of almost \( k \)-hollow LDP polygons, which was
done in~\cite{HHS23} for \( k \in \{2,3\} \) is also worth
mentioning, as these can be viewed as a subset of all \( k
\)-rational polygons with exactly one interior lattice point. Hence
they appear in our classification among the \( 5\,145 \) resp. \(
924\,042 \) polygons in the second column of the table above.

Let us also mention that in three dimensions, the lattice polytopes
with one and two interior lattice points have been
classified~\cite{Kas10,BK16} and the case of no interior lattice
points has been done in~\cite{AWW11,AKW17}. Moreover, there is the
classification of four-dimensional reflexive polytopes \cite{KS00},
which is an important subset of those having exactly one interior
lattice point.

This article is organised as follows. In
section~\ref{sec:prerequesites_for_classifying_rational_polygons}, we
go over some topics that are neccessary for our classifications,
namely normal forms, computing subpolygons and the notion of
maximality. Sections~\ref{sec:rational_polygons_in_m1p1},
\ref{sec:rational_polygons_without_interior_lattice_points}
and~\ref{sec:rational_polygons_with_exactly_one_interior_lattice_points}
describe our classification, first for the case of polygons contained
in \( \R \times [-1,1] \), then for polygons with no and exactly one
interior lattice point, filling out the first two columns of the
table in Theorem~\ref{thm:main_classification}. The methods we use
are mainly constraints on the lattice width data developed
in~\cite{Boh23}. Section~\ref{sec:half_integral_polygons} is about
the case \( k = 2 \). There, we were able to generalize the idea of
moving out the edges from Castryck's classification~\cite{Cas12}
using the classification of two-dimensional Fine interiors of
three-dimensional lattice polytopes~\cite{Boh24}. Finally, in
section~\ref{sec:general_case}, we discuss a generic (yet
inefficient) method of classifying rational polygons for any pair \(
(k,i) \in \Z_{\geq 1}^2 \) that we used to fill in the rest of the
table in Theorem~\ref{thm:main_classification}.

\section{Prerequesites for classifying rational polygons}
\label{sec:prerequesites_for_classifying_rational_polygons}

By a \emph{polygon}, we mean a convex bounded polyhedral set \( P
\subseteq \R^2 \). We call a polygon \emph{rational}
(\emph{integral}), if its vertices are contained in \( \Q^2 \) (\(
\Z^2 \)). Integral polygons will also be called \emph{lattice
	polygons}. For \( k \in \Z_{\geq 1} \), we say \( P \) is \emph{\( k
	\)-rational}, if \( kP \) is integral. The \emph{denominator} \(
\denom(P) \) of a rational polygon is the smallest positive integer
\( k \) such that \( P \) is \( k \)-rational. 

We call two polygons \( P \) and \( P' \) \emph{unimodular
	equivalent} (\( P \usim P' \)) if \( P' = UP \) for some \( U \in
\GL(2,\Z) \). We call them~\emph{affine unimodular equivalent} (\( P
\asim P'\)), if \( P' = UP+b \) for some \( U \in \GL(2,\Z) \) and \(
b \in \Z^2 \). We say that \( P \) can be~\emph{realized} in some
subset \( A \subseteq \R^2 \), if \( P \asim Q \) for some \( Q
\subseteq A \). We call \( P \) a \emph{subpolygon} of \( P' \), if
\( P \) can be realized in~\( P' \).

	We write \( \Vol(P) \)
	for the \emph{normalized volume} of a polygon, which is defined to be
	twice its euclidian volume. Moreover, we define the \emph{\( k
		\)-normalized} volume as \(\Vol_k(P) := k^2\Vol(P)\). Note that if \(
	P \) is \( k \)-rational, \( \Vol_k(P) \) is an integer. We define
	the number of interior, boundary and total number of lattice points
	as:
	\[
	i(P)\ :=\ |P^{\circ} \cap \Z^2|,\qquad b(P)\ :=\ |\partial P \cap \Z^2|,\qquad l(P)\ :=\ |P \cap \Z^2|.
	\]
	
All these are invariants under (affine) unimodular equivalence and
there exist several relations between them, e.g. Pick's formula
states that \( \Vol(P) = 2i(P) + b(P) - 2 \) holds for lattice
polygons~\cite{Pic99}. Another important invariant is the
\emph{Ehrhart quasipolynomial}, which counts the numbers of lattice
points in integral dilations of a rational polygon, i.e.
\[
	\ehr_P(t)\ :=\ l(tP)\ =\ At^2+c_1(t)t+c_2(t), \qquad t \in \Z_{\geq
		1},
\]
where \( A \) is the euclidian volume of \( P \) and \( c_1,c_2 : \Z
\to \Q \) are uniquely determined periodic functions whose period
divides \( \denom(P) \). For lattice polygons, \( c_1(t) =
\nicefrac{b(P)}{2} \) and \( c_2(t) = 1 \) holds for all \( t \). For
the existence of the Ehrhart quasipolynomial and more insights on
Ehrhart theory see for example \cite{BS15}.

\subsection{Normal forms}
\label{subsec:normal_forms}

Since we want to classify polygons up to lattice automorphism, we
need an way to decide whether two polygons are equivalent by an
affine unimodular transformation, which can be done by introducing a
normal form. PALP \cite{KS04}, see also \cite{GK13}, is such a normal
form that works for lattice polytopes of any dimension. However, we
argue that the two-dimensional case deserves to be treated
seperately. This is because the vertices of a polygon are ordered
naturally by walking along its boundary, up the choice of the
starting point and the direction. In this subsection, we present our
approach to the normal form of rational polygons, which is based on
this natural ordering of the vertices. An implementation of our
normal form is available in~\texttt{RationalPolygons.jl}
\cite{RationalPolygons_jl}.

We begin by describing how we encode \( k \)-rational polygons by
integral matrices.

\begin{defi}
	\label{def:vertex_matrix}
	Fix \( n \in \Z_{\geq 1} \). By a \emph{vertex matrix}, we
	mean an integral matrix \( V = [v_1\ \dots\ v_n] \in \Z^{2\times
		n} \) such that \( v_1, \dots, v_n \) are the vertices of \(
	\convhull(v_1,\dots,v_n) \) and \( v_i \) is connected to \(
	v_{i+1} \) by an edge for all \( i = 1, \dots, n - 1 \).
	For \( k \in \Z_{\geq 1} \), we
	obtain a \( k \)-rational polygon
	\[
		P_{k,V}\ :=\ \convhull(v_1 / k, \dots, v_n / k).
	\]
\end{defi}

\begin{rem}
	\label{rem:convex_hull_algorithms}
	Using a planar convex hull algorithm, we can reduce any finite set
	of lattice points in the plane to a set of vertices ordered by
	angle. For example, a Graham scan \cite{Gra72} achieves this
	with time complexity \( O(n\log n) \).
\end{rem}

\begin{defi}
	\label{def:vertex_matrix_reverse_and_shift}
	For a vertex matrix \( V = [v_1\ \dots\ v_n] \in \Z^{2\times n} \),
	we obtain new vertex matrices by reversing or shifting the order
	of its
	columns:
	\[
		V^{(-1)}\ :=\ [v_n\ \dots\ v_1], \qquad V_i := [v_i\ \dots
				v_n\ v_1\ \dots v_{i-1}].
	\]
	Moreover, we write \( V + w := [v_1+w\ \dots\ v_n+w] \). Two vertex
	matrices \( V, V' \in \Z^{2\times n} \) are called

	\begin{enumerate}[(i)]
		\item \emph{strictly equivalent} (\( V \sim V' \)), if \( V' = V^{(s)}_i \) for some \( s
		      \in \{ \pm 1\} \) and \( i = 1, \dots, n \),
		\item \emph{unimodular equivalent} (\( V \usim V' \)), if \(
		      V' \sim UV \) for some \( U \in \GL(2,\Z) \),
		\item \emph{\( k \)-affine unimodular equivalent} (\( V \asim_k V' \)),
		      if \( V' \sim UV + b \) for some \( U \in \GL(2,\Z) \) and \(
		      b \in k\Z^2 \).
	\end{enumerate}
\end{defi}

\begin{rem}
	\label{rem:unimodular_equivalence_by_vertex_matrices}
	Let \( V,V' \in \Z^{2\times n} \) be vertex matrices and \( k
	\in \Z_{\geq 1} \). Then we have
	\begin{enumerate}[(i)]
		\item \( P_{k,V} = P_{k,V'} \) if and only if \( V \sim V' \),
		\item \( P_{k,V} \usim P_{k,V'} \) if and only if \( V \usim
		      V' \),
		\item \( P_{k,V} \asim P_{k,V'} \) if and only if \( V \asim_k
		      V' \).
	\end{enumerate}
\end{rem}

\begin{defi}
	\label{def:unimodular_and_affine_normal_form}
	Let \( V = [v_1\ \dots\ v_n] \) be a vertex matrix.
	\begin{enumerate}[(i)]
		\item The \emph{unimodular normal form} of \( V \) is
		      \[
			      \unf(V)\ :=\ \max\{\hnf(V_i^{(s)})\ \mid\ s \in \{\pm 1\},\
			      i = 1, \dots, n\ \},
		      \]
		      where the maximum is taken by lexicographical ordering
              and we write \( \hnf(M) \) for the hermite normal form of an
		      integral matrix \( M \).
		\item The \emph{1-affine normal form} of \( V \) is
		      \[
			      \anf_1(V)\ :=\ \max\{\hnf(V_i^{(s)} - v_i)\ \mid\ s \in \{\pm 1\},\ i
			      = 1, \dots, n \}.
		      \]
		\item Let \( k \in \Z_{>1} \) and consider the lexicographically smallest
		      vector \( (x,y) \in \{0, \dots, k-1\}^2 \) such that \( \anf_1(V) +
		      (x,y) \asim_k V \). Then the \emph{k-affine normal form} of \( V \)
		      is
		      \[
			      \anf_k(V)\ :=\ \anf_1(V) + (x,y).
		      \]
	\end{enumerate}

\end{defi}

\begin{prop}
	\label{prp:normal_form_correct}
	Let \( k \in \Z_{\geq 1} \) and consider vertex matrices \( V, V'
	\in \Z^{2\times n} \). Then
	\begin{enumerate}[(i)]
		\item \( V \usim V' \) if and only if \( \unf(V) = \unf(V')
		      \),
		\item \( V \asim_1 V' \) if and only if \( \anf_1(V) =
		      \anf_1(V') \),
		\item \( V \asim_k V' \) if and only if \( \anf_k(V) =
		      \anf_k(V') \).
	\end{enumerate}
\end{prop}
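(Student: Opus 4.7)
The plan is to reduce all three parts to the canonical property of the Hermite normal form: for $M, M' \in \Z^{2 \times n}$, one has $\hnf(M) = \hnf(M')$ if and only if $M' = UM$ for some $U \in \GL(2, \Z)$. The common thread is that left multiplication by $\GL(2, \Z)$ and integer translation commute with the column shifts and reversals defining strict equivalence: the dihedral action on columns is absorbed by ranging over all $(s, i)$, the $\GL(2, \Z)$-action by taking HNF, and (in parts (ii) and (iii)) the translation by a suitable preprocessing.

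For (i), the $\usim$-orbit of $V$ equals $\{U V_i^{(s)} : U \in \GL(2, \Z),\, s \in \{\pm 1\},\, i = 1, \ldots, n\}$, so the multiset $\{\hnf(V_i^{(s)})\}$ is a $\usim$-invariant, and equality of the maxima forces $V_i^{(s)} = U V'^{(s')}_{i'}$ for some indices and some $U \in \GL(2, \Z)$, giving $V \usim V'$. Part (ii) reduces to (i) via the identity $(V'_i)^{(s)} - v'_i = U((V_i)^{(s)} - v_i)$, valid whenever $V' = UV + b$ with $b \in \Z^2$: subtracting $v_i$ normalises the $i$-th vertex to the origin and eliminates the translation $b$, so the argument of (i) carries over verbatim to the shifted matrices $(V_i)^{(s)} - v_i$.

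The main work lies in (iii), where the $\GL(2, \Z)$-action must be separated from translation modulo $k\Z^2$. I would first show that $\anf_k(V)$ is well-defined by exhibiting at least one $(x, y) \in \{0, \ldots, k-1\}^2$ with $\anf_1(V) + (x, y) \asim_k V$. Using $V \asim_1 \anf_1(V)$, write $V = U (\anf_1(V))_i^{(s)} + b$ with $U \in \GL(2, \Z)$ and $b \in \Z^2$; since $\det U = \pm 1$, the induced map $U \colon \Z^2/k\Z^2 \to \Z^2/k\Z^2$ is a bijection, so there exist a unique $(x, y) \in \{0, \ldots, k-1\}^2$ and some $b_0 \in \Z^2$ with $b = U(x, y) + k b_0$. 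Substituting yields $V = U(\anf_1(V) + (x, y))_i^{(s)} + k b_0$, witnessing $V \asim_k \anf_1(V) + (x, y)$.

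With existence established, the ``if'' direction of (iii) is immediate by transitivity: $\anf_k(V) = \anf_k(V')$ gives $V \asim_k \anf_k(V) = \anf_k(V') \asim_k V'$. For the ``only if'' direction, $V \asim_k V'$ implies $V \asim_1 V'$, hence $\anf_1(V) = \anf_1(V')$ by (ii); transitivity of $\asim_k$ then shows that the admissible sets $\{(x, y) \in \{0, \ldots, k-1\}^2 : \anf_1(V) + (x, y) \asim_k V\}$ for $V$ and $V'$ coincide, so their lexicographic minima agree. The principal obstacle is in (iii), where the decomposition $b = U(x, y) + k b_0$, relying on the bijectivity of $\GL(2, \Z)$ on $\Z^2/k\Z^2$, is the crucial ingredient that couples the linear and translational normalisations.
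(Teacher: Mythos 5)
Your proof is correct and follows essentially the same route as the paper: reduce (i) and (ii) to the canonical-form property of the Hermite normal form together with the invariance of the maximum under the dihedral column action, and deduce (iii) from (ii) by comparing the admissible translation sets modulo \( k\Z^2 \). The one worthwhile addition over the paper's argument is your explicit verification, via the decomposition \( b = U(x,y) + kb_0 \), that the set defining \( \anf_k(V) \) is non-empty, i.e.\ that the \( k \)-affine normal form is well-defined — a point the paper's proof takes for granted.
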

\begin{proof}

	Note that we have \( V \usim \unf(V) \) as well as \( \unf(UV) =
	\unf(V) \) and \( \unf(V_i^{(s)}) = \unf(V) \) for all \( U \in
	\GL(2,\Z) \) and \(i \in \{1, \dots, n\}, s \in \{\pm 1\} \). Thus
	(i) follows and (ii) is handled analogously. For (iii), we again have
	\( V \asim_k \anf_k(V) \), hence \( \anf_k(V) = \anf_k(V') \) implies
	\( V \asim_k V' \). For the converse, note that \( V \asim_k V' \)
	implies \( V \asim_1 V' \), hence by (ii), we have \( \anf_1(V) =
	\anf_1(V') \). But this means \( \anf_1(V) + (x,y) \asim_k V \) if
	and only if \( \anf_1(V') + (x,y) \asim_k V' \) for all \( (x,y) \in
	\Z^2 \), hence \( \anf_k(V) = \anf_k(V') \).
\end{proof}

\begin{rem}
	\label{rem:improve_normal_form_computation}
	Using Definition~\ref{def:unimodular_and_affine_normal_form},
	computing the unimodular or affine normal form requires computing \(
	2n \) hermite normal forms, where \( n \) is the number of vertices.
	This can be improved by first distinguishing a subset of vertices
	that maximize a suitable invariant. For example, we can call a vertex
	\( v_i \) \emph{special}, if it maximizes the area \(\det(v_{i+1} -
	v_i, v_i - v_{i-1}).\) Then we obtain an alternative unimodular
	normal form by
	\[
		\unf'(V)\ :=\ \max\{\hnf(V^{(s)}_i)\ \mid\ s \in \{\pm 1\},\ v_i\
		\text{special} \}.
	\]
	For polygons with few symmetries, there tend to be few special
	vertices, hence for those polygons \( \unf'(V) \) is faster to
	compute than \( \unf(V) \). More sophisticated invariants lead to
	fewer special vertices on average, however then the computation of
	the invariant itself becomes more expensive.
\end{rem}

\begin{rem}
	When computing the unimodular or affine normal form of a rational
	polygon, we get its respective automorphism group for free: Consider
	the set \( I \) of index pairs \( (i,s) \) with \( i = 1, \dots, n \)
	and \( s \in \{\pm 1\} \) such that \( \hnf(V^{(s)}_i) = \unf(V) \).
	Then the number of rotational symmetries of the polygon is equal to
	the cardinality of \( I \). Moreover, there exists a reflectional
	symmetry if and only if \( I \) contains two index pairs with
	opposite second coordinate. In this case, the unimodular automorphism
	group is the dihedral group \( D_{|I|/2} \). Otherwise, it is
	the cyclic group of order \( |I| \).
\end{rem}

\subsection{Subpolygons}
\label{subsec:subpolygons}

We address the problem of finding all subpolygons of a given rational
polygon up to equivalence. We use the same approach of
\emph{succesive shaving} that the authors of \cite{BK13} used to
classify lattice polygons contained in a square of given size. In our
implementation, we noticed a significant speed-up by using hilbert
bases in the shaving process, hence we describe this in detail. As an
application, we reproduce and extend the classification
of~\cite{BK13}, see~\ref{class:polygons_in_box}.

Fix \( k \in \Z_{\geq 1} \) and let \( P \) be a \( k \)-rational
polygon. Recall that a \emph{subpolygon} of \( P \) is a \( k
\)-rational polygon \( Q \) such that \( Q \asim Q' \) for some \( Q'
\subseteq P \). Up to affine unimodular equivalence, there exist only
finitely many subpolygons to any given \( k \)-rational polygon. For
a vertex \( v \), we obtain a subpolygon by taking the convex hull of
all \( k \)-rational points of \( P \) except \( v \):
\[
	P \backslash v\ :=\ \convhull\left(\left(P \cap
		\frac{1}{k}\Z^2\right) \backslash \{v\}\right).
\]

\begin{prop}
	\label{prp:subpolygons_exists_chain}
	Fix \( k \in \Z_{\geq 1} \) and let \( P \) and \( Q \) be \( k \)-rational
	polygons with \( Q \) a subpolygon of \( P \). Then there exists a chain of
	\( k \)-rational polygons
	\[
		P\ =\ P_0\ \supsetneq\ P_1\ \supsetneq\ \dots\ \supsetneq\ P_n\ \asim\ Q
	\]
	such that for all \( i = 0, \dots, n-1 \), we have \( P_{i+1} = P_i
	\backslash v_i \) for some vertex \( v_i \) of \( P_i \).
\end{prop}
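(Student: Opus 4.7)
The plan is to replace $Q$ by an affine unimodular translate $Q' \subseteq P$ (which exists by the definition of subpolygon) and then construct the chain from $P$ down to $Q'$ by induction on the finite quantity
\[
	N(R)\ :=\ \left|\left(R \cap \tfrac{1}{k}\Z^2\right) \setminus Q'\right|,
\]
where $R$ ranges over $k$-rational polygons with $Q' \subseteq R \subseteq P$. The chain will then be witnessed by $P = P_0 \supsetneq P_1 \supsetneq \dots \supsetneq P_n = Q'$ with $P_n \asim Q$.

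For the base case $N(R) = 0$, every $k$-rational point of $R$ lies in $Q'$; in particular the vertices of $R$ (which are $k$-rational) are in $Q'$, so $R = \convhull(\text{vertices of } R) \subseteq Q'$, which forces $R = Q'$ by the reverse inclusion.

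For the inductive step, suppose $N(R) \geq 1$, so there is some $k$-rational point of $R$ not in $Q'$. I claim one can choose a vertex $v$ of $R$ with $v \notin Q'$. Indeed, if every vertex of $R$ lay in $Q'$, then by convexity $R = \convhull(\text{vertices of } R) \subseteq Q'$, contradicting $N(R) > 0$. Setting $R' := R \backslash v$, I need to check three things: (a) $R'$ is a proper subset of $R$, (b) $Q' \subseteq R'$, and (c) $N(R') < N(R)$. For (a), since $v$ is a vertex of $R$ there is a supporting line $H$ with $R \cap H = \{v\}$, so every other $k$-rational point of $R$ lies strictly on one side of $H$; thus $v \notin \convhull((R \cap \tfrac{1}{k}\Z^2) \setminus \{v\}) = R'$. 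For (b), the vertices of $Q'$ are $k$-rational points of $R$ different from $v$ (since $v \notin Q'$), so $Q' = \convhull(\text{vertices of } Q') \subseteq R'$. For (c), $v$ belongs to $(R \cap \tfrac{1}{k}\Z^2) \setminus Q'$ but not to $R'$, so $N(R') \leq N(R) - 1$.

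Applying the inductive hypothesis to $R'$ yields the desired chain from $R'$ down to $Q'$, and prepending the step $R \supsetneq R'$ completes the induction starting from $R = P$. The argument terminates because $N(P)$ is finite — a consequence of $P$ being bounded and $\tfrac{1}{k}\Z^2$ being discrete. The only subtle point, and the one I would present most carefully, is the shrinking step: verifying simultaneously that $v \notin R'$ (so the containment is strict) and $Q' \subseteq R'$ (so the induction can continue), both of which rest on the characterization of a vertex via a supporting line.
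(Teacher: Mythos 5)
Your proof is correct and follows essentially the same route as the paper's: an induction that peels off one vertex $v \notin Q'$ at a time via the operation $R \mapsto R \backslash v$, driven by the decreasing count of $k$-rational points outside $Q'$. You simply spell out the inductive step (strictness, preservation of $Q' \subseteq R'$, and the decrease of the counting function) that the paper's proof leaves implicit after treating the case where the counts differ by one.
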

\begin{proof}
	We may assume that \( Q \subsetneq P \) holds. Note that the
	number of \( k \)-rational points of \( P \) must be strictly
	greater than the number of \( k \)-rational points of \( Q \).
	Let's consider the case that they differ by one, i.e. there is a
	unique \( k \)-rational point \( v
	\in \frac{1}{k}\Z \) with \( v \in P \backslash Q \). By
	convexity, \( v \) must be a vertex, hence we have \( Q = P
	\backslash v \). The general case now follows by induction.
\end{proof}

\begin{algorithm}[H]
	\caption{Computation of subpolygons}
	\label{algo:subpolygons}
	\begin{algorithmic}[1]
		\Require A set of \( k \)-rational polygons \( M \).
		\Ensure A set of \( k \)-rational polygons \( S \) such that for
		all \( P \in M \) and all subpolygons \( Q \) of \( P \), there exists
		precisely one \( Q' \in S \) such that \( Q \asim Q' \).
		\Procedure{subpolygons}{M}
		\State {\( A\ :=\ \max\{\Vol_k(P)\ \mid\ P \in M\} \)}
		\For {\( a \) from \( 1 \) to \( A \)}
		\State {\( S_a := \{P \in M\ \mid\ \Vol_k(P) = a\} \)}
		\EndFor
		\For {\( a \) from \( A \) to \( 1 \)}
		\For {\( P \in S_a \)}
		\For {\( v \) vertex of \( P \)}
		\State {\( Q\ :=\ \anf_k(P \backslash v)\)}
		\label{line:normal_form_and_remove_vertex}
		\State {\( a'\ :=\ \Vol_k(Q) \)}
		\If {\( \dim(Q) = 2 \) and \( Q \notin
			S_{a'} \)}
		\State {add \( Q \) to \( S_{a'} \)}
		\EndIf
		\EndFor
		\EndFor
		\EndFor
		\State \Return {\( S_3 \cup \dots \cup S_A \)}
		\EndProcedure
	\end{algorithmic}
\end{algorithm}
\begin{proof}
	Proposition~\ref{prp:subpolygons_exists_chain} ensures that the
	algorithm encounters every subpolygon at least once. By
	Propisition \ref{prp:normal_form_correct}, the resulting set
	contains each polygon precisely once up to affine unimodular equivalence.
\end{proof}

In line~\ref{line:normal_form_and_remove_vertex} of
Algorithm~\ref{algo:subpolygons}, we need to compute the polygon \( P
\backslash v \). This can be done by first determining all \( k
\)-rational points of \( P \), removing \( v \) and computing the
convex hull of the remaining points (for example, by performing a
Graham scan). In the rest of this section, we show how to
significantly speed this up using an algorithm involving hilbert
bases. First, let us quickly summarize the neccessary facts around
hilbert bases and their computation in two dimensions using continued
fractions. We refer to~\cite[Chapter 10]{CLS11} for details.

Let \( \sigma \subseteq \R^2 \) be a rational pointed cone. After
applying a unimodular transformation, we may assume \( \sigma =
\cone(e_2, de_1-ke_2) \) where \( 0 \leq k < d \) are coprime
integers. The \emph{hilbert basis} \( \hilb_{\sigma} \) of \( \sigma
\) is the unique minimal generating set of the monoid \( \sigma \cap
\Z^2 \). It can be computed as follows: First, using the modified
euclidian algorithm, we express \( d/k \) as a Hirzebruch-Jung
continued fraction \( \llbracket b_1, \dots, b_r \rrbracket \), which
can be defined succinctly as
\[
	\llbracket b \rrbracket\ :=\ b, \qquad\qquad \llbracket b_1, \dots, b_r
	\rrbracket\ :=\ b_1 - \frac{1}{\llbracket b_2, \dots, b_r \rrbracket}.
\]
Given \( d/k = \llbracket b_1, \dots, b_r \rrbracket \), the Hilbert
basis is given by \( \hilb_{\sigma} = \{u_0, \dots, u_{r+1}\} \),
where
\[
	u_0\ :=\ e_2, \qquad u_1\ :=\ e_1, \qquad u_{i+1}\ :=\
	b_iu_i-u_{i-1}.
\]

\begin{prop}
	\label{prp:hilbert_basis_eq_vertices_of_convhull}
	Let \( \sigma \subseteq \R^2 \) be a rational pointed cone with
	hilbert basis \( \hilb_{\sigma} = \{u_0, \dots, u_{r+1}\} \) as
	constructed above. Consider the unbounded polyhedron
    \[
        \Theta_{\sigma}\ :=\ 
        \convhull((\sigma \cap \Z^2) \backslash \{0\}).
    \]
    Then the following holds.
	\begin{enumerate}[(i)]
		\item \( \hilb_{\sigma} \) is the set of lattice points on the
		      bounded edges of \( \Theta_{\sigma} \).
		\item \( \hilb'_{\sigma} := \{u_i \in \hilb_{\sigma}\ \mid\
		      b_i \neq 2\} \) is the set of vertices of \(
		      \Theta_{\sigma} \).
	\end{enumerate}
\end{prop}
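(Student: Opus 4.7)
The plan is to rest on two standard consequences of the Hirzebruch--Jung recursion $u_{i+1} = b_iu_i - u_{i-1}$ that are laid out in \cite[Chapter 10]{CLS11}: first, $b_i \geq 2$ for every $i = 1, \dots, r$, and second, $|\det(u_i, u_{i+1})| = 1$ for every $i = 0, \dots, r$. Both follow by an easy induction from the starting values $u_0 = e_2$, $u_1 = e_1$, and together they imply that the subcones $\sigma_i := \cone(u_i, u_{i+1})$ are smooth and cover $\sigma$.

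For (i), I would introduce for each $i$ the linear functional $\ell_i$ determined by $\ell_i(u_i) = \ell_i(u_{i+1}) = 1$. Since $u_i, u_{i+1}$ form a $\Z$-basis, every lattice point $p \in \sigma_i$ is $p = au_i + bu_{i+1}$ with $a, b \in \Z_{\geq 0}$, and $\ell_i(p) = a+b$; this equals $1$ exactly when $p \in \{u_i, u_{i+1}\}$ and is $\geq 2$ otherwise. The technical heart is to promote this to a global bound $\ell_i \geq 1$ on all of $(\sigma \cap \Z^2) \setminus \{0\}$. Setting $f_j := \ell_i(u_j)$, the same recursion yields $(f_{j+1} - f_j) - (f_j - f_{j-1}) = (b_j - 2)f_j$, so starting from $f_i = f_{i+1} = 1$ the differences of $(f_j)$ are non-negative and non-decreasing moving away from $\{i, i+1\}$; combined with $b_j \geq 2$ this gives $f_j \geq 1$ for every $j$. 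Decomposing any nonzero lattice point of some $\sigma_k$ as $au_k + bu_{k+1}$ then produces $\ell_i \geq a + b \geq 1$ globally. Consequently each segment $[u_i, u_{i+1}]$ is a face of $\Theta_\sigma$, the bounded part of $\partial\Theta_\sigma$ is the concatenation $[u_0, u_1] \cup \dots \cup [u_r, u_{r+1}]$, and the lattice points on it are exactly $u_0, \dots, u_{r+1} = \hilb_\sigma$.

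For (ii), I would apply the standard collinearity criterion for vertices of a two-dimensional polyhedron: an interior element $u_i$ with $1 \leq i \leq r$ is a vertex of $\Theta_\sigma$ if and only if $u_{i-1}, u_i, u_{i+1}$ are not collinear. Rewriting the recursion as $u_{i-1} + u_{i+1} = b_iu_i$ shows that collinearity is equivalent to $u_i = \tfrac{1}{2}(u_{i-1} + u_{i+1})$, i.e.\ to $b_i = 2$. The endpoints $u_0$ and $u_{r+1}$ are always vertices, since they are where the bounded arc meets the two unbounded edges of $\Theta_\sigma$ running along the rays of $\sigma$. Combining these two cases yields the stated description of $\hilb'_\sigma$.

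The main obstacle is the propagation step in the middle paragraph: a naive one-step induction only gives $f_{j+1} \geq 1$ from $f_j \geq 1$ and $b_j \geq 2$ once $f_{j-1}$ is controlled, so one must carry the monotonicity statement alongside the positivity bound. Once the functional bound $\ell_i \geq 1$ is in hand, the remaining geometric content, including that the segments $[u_i, u_{i+1}]$ really do exhaust the bounded boundary rather than a proper subset of it, reduces to routine inspection.
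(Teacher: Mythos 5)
Your argument is correct, and for part (i) it takes a genuinely more self-contained route than the paper. The paper disposes of (i) by citing \cite[Theorem 10.2.8 (b)]{CLS11} and proves nothing itself; you instead reprove that statement via the supporting functionals \( \ell_i \) normalized by \( \ell_i(u_i)=\ell_i(u_{i+1})=1 \), establishing the global bound \( \ell_i \geq 1 \) on \( (\sigma\cap\Z^2)\setminus\{0\} \) by propagating both positivity and monotonicity of the differences \( f_{j+1}-f_j \) through the recursion \( f_{j+1}=b_jf_j-f_{j-1} \). That strengthened induction hypothesis is exactly what is needed (a naive one-step induction does fail, as you note), and it is essentially the standard proof of the cited theorem; what it buys is independence from the reference, at the cost of a page of bookkeeping the paper avoids. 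Two small points you leave implicit but should state: the segments \( [u_j,u_{j+1}] \) contain no lattice points besides their endpoints because \( |\det(u_j,u_{j+1})|=1 \) (this is needed to conclude that the lattice points on the bounded edges are \emph{exactly} \( \hilb_\sigma \)), and the identification of the two unbounded edges of \( \Theta_\sigma \) with the rays of \( \sigma \) at \( u_0 \) and \( u_{r+1} \) deserves a sentence. For part (ii) your argument coincides with the paper's: both reduce vertexhood of an interior \( u_i \) to non-collinearity of \( u_{i-1},u_i,u_{i+1} \) and read off from \( u_{i-1}+u_{i+1}=b_iu_i \) that collinearity holds precisely when \( b_i=2 \).
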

\begin{proof}
	For (i), we refer to \cite[Theorem 10.2.8 (b)]{CLS11}. For part
	(ii), note that we have \( u_{i+1}-u_i = (b_i-1)u_i-u_{i-1} \) for
	all \( i=1, \dots, r \).
	Hence \( u_i \) lies on the line segment between \( u_{i-1} \) and
	\( u_{i+1} \) if and only if \( b_i=2 \).
\end{proof}

\begin{coro}
	\label{cor:vertices_of_removed_edge_polygon}
	Let \( P \) be a lattice polygon with vertices \( v_1, \dots, v_n
	\in \Z^2 \) and assume \( v_j=0 \). With \(
	\sigma=\cone(v_{j-1},v_{j+1}) \) and \( \hilb_{\sigma}' \) as in
	the proposition, the vertices of \( P \backslash v_j \) are given
	by
	\[
		(\{v_1, \dots, v_n\} \backslash \{v_j\})\ \cup\ \hilb_{\sigma}'.
	\]
\end{coro}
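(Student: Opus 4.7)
The plan is to establish the set-theoretic identity \( P \backslash v_j = \Theta_\sigma \cap P \) and then read off the vertices of the right-hand side. Since \( v_j = 0 \) is a vertex of \( P \) whose incident edges point toward \( v_{j-1} \) and \( v_{j+1} \), the tangent cone of \( P \) at \( v_j \) coincides with \( \sigma \), so in particular \( P \subseteq \sigma \). Consequently \( (P \cap \Z^2) \backslash \{v_j\} \subseteq (\sigma \cap \Z^2) \backslash \{0\} \), and passing to convex hulls yields the easy inclusion \( P \backslash v_j \subseteq \Theta_\sigma \cap P \).

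For the reverse inclusion the central point is to show \( \hilb'_\sigma \subseteq P \). Let \( u_0, u_{r+1} \) denote the primitive generators of the two rays of \( \sigma \), so that \( v_{j-1} \) and \( v_{j+1} \) are positive integer multiples of \( u_0 \) and \( u_{r+1} \) respectively. I would prove the chain
\[
    \hilb'_\sigma\ \subseteq\ \convhull(0, u_0, u_{r+1})\ \subseteq\ \convhull(0, v_{j-1}, v_{j+1})\ \subseteq\ P,
\]
where the last two inclusions are immediate from \( u_0 \in [0, v_{j-1}] \), \( u_{r+1} \in [0, v_{j+1}] \), and the convexity of \( P \). For the first inclusion I would write each hilbert basis element as \( u_i = \alpha_i u_0 + \beta_i u_{r+1} \) and exploit the recurrence \( b_i u_i = u_{i-1} + u_{i+1} \) to obtain the scalar recursion \( b_i s_i = s_{i-1} + s_{i+1} \) for \( s_i := \alpha_i + \beta_i \), with boundary values \( s_0 = s_{r+1} = 1 \). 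Since every \( b_i \geq 2 \), a discrete maximum principle (if the maximum \( M := \max_i s_i \) were strictly greater than \( 1 \), then taking the smallest interior index \( i^* \) at which it is attained gives \( s_{i^*-1} < M \) and \( s_{i^*+1} \leq M \), so \( b_{i^*} M = s_{i^*-1} + s_{i^*+1} < 2M \), contradicting \( b_{i^*} \geq 2 \)) forces \( s_i \leq 1 \) for all \( i \), which is exactly membership in the triangle.

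With \( \hilb'_\sigma \subseteq P \) established, I would identify the vertices of \( \Theta_\sigma \cap P \) by inspecting its boundary. By Proposition~\ref{prp:hilbert_basis_eq_vertices_of_convhull}, the bounded edges of \( \Theta_\sigma \) form the staircase joining consecutive elements of \( \hilb'_\sigma \), while the two unbounded edges lie along the rays of \( \sigma \) and overlap with the edges \( v_j v_{j-1} \) and \( v_j v_{j+1} \) of \( P \). The overlapping rays contribute only the segments \( [u_0, v_{j-1}] \) and \( [u_{r+1}, v_{j+1}] \), and the staircase lies inside \( \convhull(0, u_0, u_{r+1}) \subseteq P \), so it creates no transverse crossings with the remaining edges of \( P \). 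Hence every vertex of \( \Theta_\sigma \cap P \) is either a vertex \( v_i \) of \( P \) lying in \( \Theta_\sigma \) (forcing \( i \neq j \)) or an element of \( \hilb'_\sigma \). Since all these points are lattice points of \( P \) distinct from \( v_j \), we conclude \( \Theta_\sigma \cap P \subseteq \convhull((P \cap \Z^2) \backslash \{v_j\}) = P \backslash v_j \), closing the loop and delivering the claimed vertex list.

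The main obstacle is the confinement \( \hilb'_\sigma \subseteq P \): without it, a hilbert basis vertex could fall outside \( P \) and \( \Theta_\sigma \cap P \) would acquire new vertices at edge crossings that are generally not lattice points, destroying the clean description. The discrete maximum principle above is precisely what pins the staircase inside the triangle spanned by the origin and the two neighbouring vertices.
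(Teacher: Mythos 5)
Your proposal is correct: the paper states this corollary without proof, treating it as immediate from Proposition~\ref{prp:hilbert_basis_eq_vertices_of_convhull}, and your argument is the natural filling-in of that derivation via the identity \( P \backslash v_j = \Theta_{\sigma} \cap P \). The one substantive ingredient you supply that the paper leaves implicit is the confinement \( \hilb_{\sigma} \subseteq \convhull(0, u_0, u_{r+1}) \subseteq P \), and your discrete maximum principle using \( b_i \geq 2 \) establishes it correctly.
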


\begin{rem}
	\label{rem:vertex_ordering_hilbert_basis}
	The vertices \( \hilb_{\sigma}' = \{u_1', \dots, u_s'\} \) are
	already ordered naturally, i.e. \( u_i' \) and \( u_{i+1}' \) are
	connected by an edge. This means that if we start with ordered
	vertices \( v_1, \dots, v_n \) of \( P \),
	Corollary~\ref{cor:vertices_of_removed_edge_polygon} allows us to
	easily write down the ordered vertices of \( P \backslash v_j \),
	which ties in nicely with our normal form.
\end{rem}

Note that the condition \( v_j = 0 \) in
Corollary~\ref{cor:vertices_of_removed_edge_polygon} is not a
restriction, since we can translate any lattice polygon to move \(
v_j \) to the origin. Moreover, for any \( k \)-rational polygon \( P
\), we can apply the Corollary to \( kP \) to compute the vertices of
\( P \backslash v \) for any vertex \( v \) of \( P \).

Using this improved algorithm for determining subpolygons, we were
able to reproduce and extend a classification by Brown and Kasprzyk,
see~\cite[Table 1]{BK13}.

\begin{class}[Data available at~\cite{springer_2024_13838476}]
	\label{class:polygons_in_box}
	Below are the numbers \( \#(m) \) of lattice subpolygons of \( [0,m] \times
	[0,m] \) that are not subpolygons of \( [0,m-1] \times [0,m-1] \),
	up to affine unimodular equivalence. We also include the maximally
	attained number of vertices \( N(m) \) and the number of
	vertex maximizers \( M(m) \).
	\begin{center}
		{\small
			\begin{tabular}{c|cccccc}
				\( m \)       & 1                & 2           & 3            & 4      & 5       & 6        \\\hline
				\( \#(m) \)   & 2                & 15          & 131          & 1\,369 & 13\,842 & 129\,185 \\
				\( N(m) \)    & 4                & 6           & 8            & 9      & 10      & 12       \\
				\( M(m) \)    & 1                & 1           & 1            & 1      & 15      & 2        \\\hline\hline
				\( m \)       & 7                & 8           & 9            & 10     & 11      &          \\\hline
				\( \#(m) \)   & 1\,104\,895      & 8\,750\,964 & 64\,714\,465 &
				450\,686\,225 & 2\,976\,189\,422 &                                                          \\
				\( N(m) \)    & 13               & 14          & 16           & 16     & 17                 \\
				\( M(m) \)    & 3                & 13          & 1            & 102    & 153                \\
			\end{tabular}
		}
	\end{center}
\end{class}

\begin{rem}
	\label{rem:subpolygons_with_same_number_of_interior_points}
	In our application, we are rarely interested in all subpolygons of
	a \( k \)-rational polygon \( P \), but only in those sharing the
	same number of interior lattice points with \( P \). This can be
	integrated rather easily into our Algorithm~\ref{algo:subpolygons}:
	Whenever we compute the vertices of \( P \backslash v \) using
	Corollary~\ref{cor:vertices_of_removed_edge_polygon}, we check
	whether \( \hilb_{\sigma}' \cap k\Z^2 \neq \emptyset \). If
	so, that means an interior lattice point of \( P \) has moved to
	the boundary of \( P \backslash v \). Hence \( P \backslash v \)
	has stricly fewer interior lattice points than \( P \) and we can
	discard it immediately.
\end{rem}

\subsection{Maximal polygons}
\label{subsec:maximal_polygons}

By a \emph{\( k \)-maximal} polygon, we mean a \( k \)-rational
polygon \( P \) such that for any \( k \)-rational polygon \( Q
\supseteq P\) with \( i(P) = i(Q) \), we have \( P = Q \). Clearly,
every \( k \)-rational polygon is a subpolygon of a \( k \)-maximal
one. This means if we classify all \( k \)-maximal polygons with \( i
\in \Z_{\geq 0} \) interior lattice points, we can use
Algorithm~\ref{algo:subpolygons} to obtain all \( k \)-rational
polygons with \( i \) interior lattice points. In
Proposition~\ref{prp:k_maximal_criterion}, we give a criterion for \(
k \)-maximality that can be checked efficiently. Let us first fix
some notation.

A \( k \)-rational polygon \( P \) can be written as the intersection
of the affine halfplanes \( H_e \) associated with its edges, where
\[
	H_e = \{x \in \R^2\ \mid\ \langle
	n_e, x\rangle \geq c_e \}, \qquad n_e \in \Z^2\ \text{primitive},
	\qquad c_e \in \Q.
\]
For any \( x \in \R^2 \), we define the \emph{lattice distance} of
the edge \( e \) with \( x \) as \( \ldist_e(x) := \langle n_e,x
\rangle - c_e \). Hence \( H_e \) consists of those points having
nonnegative lattice distance with \( e \). For any rational number \(
q \in \Q \), we define the \emph{translated halfplane} by
\[
	H_e + q\ :=\ \{x \in \R^2\ \mid\ \ldist_e(x) \geq q \}.
\]
By moving out the affine halfplanes, we obtain a new rational polygon
\(P^{(-1)}:=\bigcap_e \left(H_e - \frac{1}{k}\right)\), where the
intersection runs through all edges of \( P \).

\begin{lemma}
	\label{lem:empty_triangles_lattice_distance}
	Let \( \Delta = \convhull(x,y,z) \subseteq \R^2 \) be a lattice
	triangle containing no lattice points other than its vertices.
	Then \( \ldist_e(z) = 1 \), where \( e \) is the edge connecting
	\( x \) and \( y \).
\end{lemma}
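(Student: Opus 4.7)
The plan is to reduce the statement to two basic facts about lattice polygons already available to us: Pick's formula (recorded in the preamble of Section~\ref{sec:prerequesites_for_classifying_rational_polygons}) and the identification of the lattice distance to an edge with the ratio of the normalized volume and the lattice length of that edge. The triangle $\Delta$ has $i(\Delta) = 0$ and $b(\Delta) = 3$ since, by hypothesis, the only lattice points of $\Delta$ are $x,y,z$. Pick's formula therefore gives
\[
\Vol(\Delta)\ =\ 2 \cdot 0 + 3 - 2\ =\ 1.
\]

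Next I would express $\Vol(\Delta)$ in terms of the data attached to the edge $e = \convhull(x,y)$. Let $d \in \Z^2$ be the primitive lattice vector in the direction of $y - x$, and write $y - x = \ell d$ where $\ell \in \Z_{\geq 1}$ is the lattice length of $e$. Because $e$ contains no lattice points besides $x$ and $y$, we in fact have $\ell = 1$. Now if $n_e$ denotes the primitive inward normal of $e$ with offset $c_e = \langle n_e,x \rangle = \langle n_e, y\rangle$, then $n_e = \pm d^\perp$, so a direct determinant computation gives
\[
\Vol(\Delta)\ =\ |\det(y-x, z-x)|\ =\ \ell \cdot |\langle n_e, z - x\rangle|\ =\ \ell \cdot \ldist_e(z),
\]
where the last equality uses that $z$ lies in the halfplane $H_e$, so $\ldist_e(z) \geq 0$.

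Combining the two identities yields $\ell \cdot \ldist_e(z) = 1$, and since $\ldist_e(z)$ is a nonnegative integer (both $n_e$ and $z$ are integral, and $c_e = \langle n_e, x\rangle \in \Z$) and $\ell = 1$, we conclude $\ldist_e(z) = 1$.

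I do not anticipate any real obstacle: the only mild subtlety is justifying the formula $\Vol(\Delta) = \ell \cdot \ldist_e(z)$, which is a short observation once one passes to the primitive edge direction and its orthogonal complement. Alternatively, one can dispatch the lemma even more quickly by first applying an affine unimodular transformation to send $x$ to $0$ and $y$ to $e_1$ (using that $y - x$ is primitive), so that $e$ lies on the $x$-axis with $n_e = (0,1)$ and $c_e = 0$; then $\ldist_e(z) = z_2$, and Pick's formula forces $|z_2| = 1$.
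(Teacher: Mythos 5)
Your proof is correct and follows essentially the same route as the paper: apply Pick's formula to get normalized volume $1$ (area $\tfrac{1}{2}$), then observe that this forces $\ldist_e(z)=1$. You merely make explicit the step the paper leaves implicit, namely the identity $\Vol(\Delta) = \ell\cdot\ldist_e(z)$ with lattice length $\ell = 1$.
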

\begin{proof}
	By Pick's formula, \( \Delta \) has area \( i(\Delta) +
	\frac{b(\Delta)}{2} - 1 = \frac{1}{2} \). This forces \(
	\ldist_e(z) = 1 \).
\end{proof}

\begin{prop}
	\label{prp:k_maximal_criterion}
	For a \( k \)-rational polygon \( P \), the following are
	equivalent:
	\begin{enumerate}[(i)]
		\item \( P \) is \( k \)-maximal,
		\item for all \( v \in \partial P^{(-1)} \cap \frac{1}{k}\Z^2 \), we have \(
		      i(\convhull(P\cup\{v\})) > i(P) \),
		\item for all \( v \in \partial P^{(-1)} \cap \frac{1}{k}\Z^2 \), one of the following holds:
		      \begin{enumerate}[(1)]
			      \item There exists an edge \( e \) of \( P \) and a point \( w \in
			            e^{\circ} \cap \Z^2 \) such that \( v \notin H_e \),
			      \item There exists a vertex \( w \in \Z^2 \) of \( P \) such that \( v
			            \notin H_e \) and \( v \notin H_{e'} \), where \( e \) and \( e' \)
			            are the edges adjacent to \( w \).
		      \end{enumerate}
	\end{enumerate}
\end{prop}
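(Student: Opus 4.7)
The plan is to establish the cycle of implications by proving (i) $\Rightarrow$ (ii), the equivalence (ii) $\Leftrightarrow$ (iii), and (ii) $\Rightarrow$ (i). The central arithmetic observation used throughout is that $\ldist_e(v) \in \frac{1}{k}\Z$ whenever $v \in \frac{1}{k}\Z^2$, since $n_e$ is integral and $c_e \in \frac{1}{k}\Z$ for the $k$-rational polygon $P$. Consequently, any $k$-rational point in $P^{(-1)} \setminus P$ must lie on $\partial P^{(-1)}$: having $\ldist_e(v) \in [-1/k, 0)$ for some edge $e$ together with $v \in \frac{1}{k}\Z^2$ forces $\ldist_e(v) = -1/k$.

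Implication (i) $\Rightarrow$ (ii) is immediate, since any $v \in \partial P^{(-1)} \cap \frac{1}{k}\Z^2$ lies outside $P$ and yields a strictly larger $k$-rational polygon $\convhull(P \cup \{v\})$, which by $k$-maximality must gain interior lattice points. For (iii) $\Rightarrow$ (ii), each subcondition exhibits a lattice point $w \in \partial P \cap \Z^2$ that becomes interior to $\convhull(P \cup \{v\})$, because the edges of $P$ through $w$ cease to be edges of the new polygon precisely when $v$ lies outside the corresponding halfspaces. For the converse (ii) $\Rightarrow$ (iii), I would take a new interior lattice point $w \in Q^\circ \cap \Z^2 \setminus P^\circ$ of $Q := \convhull(P \cup \{v\})$ and first rule out the case $w \in Q \setminus P$: since $v \in \partial P^{(-1)}$ forces $Q \subseteq P^{(-1)}$, such $w$ would satisfy $\ldist_{e'}(w) = -1/k$ for some edge $e'$; but among the vertices of $Q$, those coming from $P$ satisfy $\ldist_{e'} \geq 0$, so only $v$ can attain a negative value of $\ldist_{e'}$. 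Hence the minimum $-1/k$ is attained on $Q$ only at $v$, forcing $w = v$ and contradicting $w \in Q^\circ$. Thus $w \in \partial P$, and according to whether $w$ lies in the relative interior $e^\circ$ of an edge or is a lattice vertex of $P$, the membership $w \in Q^\circ$ directly translates into condition (iii)(1) or (iii)(2).

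For the final step (ii) $\Rightarrow$ (i), I would argue contrapositively. Given a witness $Q \supsetneq P$ of $k$-rational polygons with $i(Q) = i(P)$, consider the $k$-rational polygon $\widetilde{Q} := \convhull\bigl((Q \cap P^{(-1)}) \cap \frac{1}{k}\Z^2\bigr)$. It satisfies $P \subseteq \widetilde{Q} \subseteq P^{(-1)}$ and $i(\widetilde{Q}) = i(P)$, so any of its vertices not in $P$ lies in $\partial P^{(-1)} \cap \frac{1}{k}\Z^2$ by the key observation and witnesses the failure of~(ii). The main obstacle, as I see it, is establishing $\widetilde{Q} \supsetneq P$, i.e.\ that $(Q \cap P^{(-1)}) \setminus P$ contains at least one $k$-rational point. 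The inclusion $Q \cap P^{(-1)} \supsetneq P$ holds topologically (either $Q \subseteq P^{(-1)}$ and then $Q \cap P^{(-1)} = Q \supsetneq P$, or $Q$ crosses $\partial P^{(-1)}$ and the crossing is automatically disjoint from $P$), but exhibiting a $k$-rational point in this strictly larger set requires either a vertex of $Q$ already sitting in $P^{(-1)}$, or a careful traversal of the $k$-rational points along an edge of $Q$ from an exterior vertex inward, using the primitivity of the edge direction to control the spacing of $k$-rational points relative to where the edge crosses $\partial P^{(-1)}$.
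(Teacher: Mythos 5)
Your treatment of (i) $\Rightarrow$ (ii) and of the equivalence (ii) $\Leftrightarrow$ (iii) is correct and follows essentially the same route as the paper; in fact your argument that a new interior lattice point of $\convhull(P\cup\{v\})$ cannot lie outside $P$ (via $\convhull(P\cup\{v\})\subseteq P^{(-1)}$ and the observation that only the vertex $v$ can attain the minimum $-\frac{1}{k}$ of $\ldist_{e'}$) supplies a detail the paper leaves implicit. The genuine problem is the direction (ii) $\Rightarrow$ (i), where you yourself flag the unresolved step: you must exhibit a $k$-rational point in $(Q\cap P^{(-1)})\setminus P$, and neither of your suggested fixes closes this. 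A vertex of $Q$ need not lie in $P^{(-1)}$; an edge of $Q$ need not meet $P^{(-1)}\setminus P$ at all (all vertices of $Q$ may sit outside $P^{(-1)}$, with that region met only by the interior of $Q$); and even when an edge does cross it, consecutive $k$-rational points on that edge are separated by $\frac{1}{k}$ times the length of its primitive direction vector, which can be arbitrarily large, so a one-dimensional traversal gives no control over whether one of them lands in the sliver.

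The missing idea is two-dimensional and is exactly Lemma~\ref{lem:empty_triangles_lattice_distance}: take a vertex $z$ of $Q$ with $z\notin H_e$ for some edge $e$ of $P$, take two adjacent $k$-rational points $x,y\in e$, and inside the triangle $\convhull(x,y,z)\subseteq Q$ choose a $k$-rational point $z'\notin H_e$ whose distance to the line spanned by $e$ is minimal. Then $\convhull(x,y,z')$ contains no $k$-rational points besides its vertices, so after scaling by $k$ the lemma (i.e.\ Pick's formula) forces $\ldist_e(z')=-\frac{1}{k}$. This manufactures the $k$-rational point just outside $P$ that your construction needs, and then $i(P)\leq i(\convhull(P\cup\{z'\}))\leq i(Q)=i(P)$ yields the contradiction with (ii). Without this (or an equivalent) lattice-geometric input, your contrapositive argument does not go through, so as it stands your proof of (ii) $\Rightarrow$ (i) is incomplete.
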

\begin{proof}
	``(i) \( \Rightarrow \) (ii)'' is clear. To show ``(ii) \(
	\Rightarrow \) (i)'', assume \( P \) is not \( k \)-maximal. Then we
	have \( P \subsetneq Q \) for some \( k \)-rational \( Q \) with
	\( i(P) = i(Q) \). Let \( z \in Q \) be a vertex and \( e \) an
	edge of \( P \) such that \( z \notin H_e \). Pick two \( k
	\)-rational points \( x,y \in e \) containing no \( k \)-rational
	points on the line segment between them and consider the triangle
	\( \Delta := \convhull(x,y,z) \). Let \( z' \in \Delta \) be a \(
	k \)-rational point with minimal lattice distance to \( e \). Then
	\( \Delta' := \convhull(x,y,z') \) contains no \( k \)-rational
	points other than its vertices, hence Lemma
	\ref{lem:empty_triangles_lattice_distance} implies \( \ldist_e(z')
	= -\frac{1}{k} \). We obtain
	\[
		i(P)\ \leq\ i(\convhull(P \cup \{z'\}))\ \leq\ i(Q).
	\]
	Since \( i(P) = i(Q) \), we get equality, which contradicts (ii).

	To see that (ii) implies (iii), note that \( Q := \convhull(P \cup
	\{v\}) \) must contain a lattice point \( w \in \partial P \cap \Z^2 \) in its interior. If \( w \) lies in the relative
	interior of an edge of \( P \), we are in case~(1). If \( w \) is a
	vertex of \( P \), we are in case~(2). The converse (iii) \(
	\Rightarrow \) (ii) is immediate.

\end{proof}

\begin{rem}
	\label{rem:maximal_convex_set}

	A \( k \)-maximal polygon is not necessarily maximal as a convex set,
	i.e. there could be a convex set with the same number of interior
	lattice points which strictly contains the \( k \)-maximal polygon. A
	\( k \)-rational polygon is maximal as a convex set if and only if
	every edge contains an interior lattice point. For example, the
	quadrilateral
	\begin{displaymath}
		\convhull\left(\left(0,1+\frac{1}{k}\right), (0,0),
		(k(i+1)+i,0), \left(\frac{1}{k},1+\frac{1}{k}\right)\right), \quad
		i \in \Z_{\geq 0}, k \in \Z_{\geq 1}
	\end{displaymath}
	is \( k \)-maximal for \( (i,k)\notin \{(0,1), (0,2), (1,1)\} \) by
	Proposition~\ref{prp:k_maximal_criterion}, but
	it is not a maximal convex set since there is no interior
	lattice point on the edge between \(\left(0,1+\frac{1}{k}\right)\)
	and \(\left(\frac{1}{k},1+\frac{1}{k}\right)\).
\end{rem}

We end this section by showing that for lattice polygons, our notion
of maximality coincides with the one used by~\cite{Koe91}
and~\cite{Cas12}. Recall that the \emph{interior hull} of a rational
polygon is \( P^{(1)} := \convhull(P^{\circ} \cap \Z^2) \).

\begin{prop}
	\label{prp:lattice_polygon_maximal_criterion}
	Let \( P \) be a lattice polygon with two-dimensional interior
	hull \( P^{(1)} \). Then \( P \) is \( 1 \)-maximal if and only if
	\( P = P^{(1)(-1)} \).
\end{prop}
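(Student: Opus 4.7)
The plan is to reduce both implications to a single key inclusion: for any lattice polygon \( Q \) whose interior hull \( Q^{(1)} \) is two-dimensional, one has \( Q \subseteq Q^{(1)(-1)} \). Granting this, the ``\( \Leftarrow \)'' direction is short: if \( P = P^{(1)(-1)} \) and \( Q \supseteq P \) is a lattice polygon with \( i(Q) = i(P) \), then \( P^{\circ} \cap \Z^2 = Q^{\circ} \cap \Z^2 \) forces \( Q^{(1)} = P^{(1)} \), and the key inclusion gives \( Q \subseteq Q^{(1)(-1)} = P^{(1)(-1)} = P \), so \( Q = P \).

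For ``\( \Rightarrow \)'', I first observe that \( P^{(1)} \subseteq P^{\circ} \), since the convex hull of a finite subset of an open convex set stays inside it; hence the lattice points of \( P^{(1)} \) coincide with \( P^{\circ} \cap \Z^2 \). Because the edges of \( P^{(1)(-1)} \) are parallel translates of the edges of \( P^{(1)} \) by lattice distance exactly one, and lattice distances of integer points are themselves integers, the interior lattice points of \( P^{(1)(-1)} \) are precisely the lattice points of \( P^{(1)} \); this gives \( i(P^{(1)(-1)}) = i(P) \). Combined with \( P \subseteq P^{(1)(-1)} \) from the key inclusion, \( 1 \)-maximality of \( P \) forces \( P = P^{(1)(-1)} \).

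The heart of the argument is the key inclusion. Assume toward contradiction that a vertex \( v \) of \( Q \) satisfies \( \ldist_e(v) \leq -2 \) for some edge \( e \) of \( Q^{(1)} \) with endpoints \( u_1, u_2 \). After a unimodular change of coordinates I arrange \( e \subseteq \{ y = 0 \} \), \( Q^{(1)} \subseteq \{ y \geq 0 \} \), \( u_i = (a_i, 0) \) with \( a_1 < a_2 \), and \( v = (v_1, v_2) \) with \( v_2 \leq -2 \). The triangle \( \convhull(u_1, u_2, v) \subseteq Q \) meets the horizontal line \( y = -1 \) in a segment whose endpoints have \( x \)-coordinates \( (a_i(|v_2| - 1) + v_1)/|v_2| \). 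Scaling by \( |v_2| \), finding a lattice point on that segment reduces to locating a multiple of \( |v_2| \) in an interval of \( (a_2 - a_1)(|v_2| - 1) + 1 \geq |v_2| \) consecutive integers, which always succeeds by pigeonhole. The resulting lattice point \( (k, -1) \) either lies strictly inside the triangle, or on one of the segments from \( u_i \) to \( v \) strictly between the endpoints; using that \( u_1, u_2 \in Q^{\circ} \) as interior lattice points of \( Q \), either way \( (k, -1) \in Q^{\circ} \). This contradicts \( Q^{(1)} \subseteq \{ y \geq 0 \} \), since \( (k, -1) \) would then be an interior lattice point of \( Q \) lying outside its interior hull.

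The main obstacle is the pigeonhole step in the previous paragraph. When \( a_2 - a_1 = 1 \) and \( |v_2| \) is large, the horizontal slice has length \( (|v_2|-1)/|v_2| < 1 \), so a naive length argument fails. The saving observation is that after rescaling by \( |v_2| \), the problem reduces to existence of a multiple of \( |v_2| \) in a window of exactly \( |v_2| \) consecutive integers, which is automatic.
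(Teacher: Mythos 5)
Your ``key inclusion'' \( Q \subseteq Q^{(1)(-1)} \) and its pigeonhole proof are correct: the slice of \( \convhull(u_1,u_2,v) \) at height \( -1 \) always contains a lattice point because \( a_2 - a_1 \geq 1 \), and that point is forced into \( Q^{\circ} \) either as an interior point of the triangle or via the half-open segment from \( u_i \in Q^{\circ} \) to \( v \). This gives an elementary, self-contained replacement for the containment half of \cite[Theorem 5]{Cas12}, which the paper simply cites, and your ``\( \Leftarrow \)'' direction goes through cleanly with it (the paper instead routes this direction through a \( 1 \)-maximal polygon containing \( P \) and the forward implication).

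The forward direction, however, has a genuine gap. By definition, \( 1 \)-maximality only quantifies over \emph{lattice} polygons \( Q \supseteq P \), so before you can ``force \( P = P^{(1)(-1)} \)'' you must know that \( P^{(1)(-1)} \) is a lattice polygon, i.e.\ has integral vertices. You never address this, and it is not automatic: moving out the edges of an arbitrary lattice polygon by lattice distance one can produce non-integral vertices. For instance, if two adjacent edges meet at the origin with primitive inner normals \( (1,0) \) and \( (2,3) \) (as happens for \( \convhull((0,0),(0,1),(3,-2)) \)), the moved-out lines \( x = -1 \) and \( 2x+3y=-1 \) meet at \( (-1,\nicefrac{1}{3}) \). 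What your argument actually yields from maximality is only \( P = \convhull\left(P^{(1)(-1)} \cap \Z^2\right) \); concluding \( P = P^{(1)(-1)} \) additionally requires that \( P^{(1)(-1)} \) equals the convex hull of its lattice points. That integrality statement --- valid because \( P^{(1)} \) is an interior hull and not an arbitrary lattice polygon --- is precisely the other, harder half of \cite[Theorem 5]{Cas12} on which the paper's proof relies, and it needs either a citation or its own argument.
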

\begin{proof}
	Assume \( P \) is \( 1 \)-maximal. Then by~\cite[Theorem
		5]{Cas12}, \( P^{(1)(-1)} \) is a lattice polygon containing \( P
	\). Clearly, we have \( i(P) = i(P^{(1)(-1)}) \), hence we obtain
	\( P = P^{(1)(-1)} \). Conversely, if \( P = P^{(1)(-1)} \), pick
	a \( 1 \)-maximal polygon \( Q \) containing \( P \) with \(
	i(P)=i(Q) \). Then we must have \( Q^{(1)} = P^{(1)} \), hence \(
	Q = Q^{(1)(-1)} = P^{(1)(-1)} = P \).
\end{proof}

\section{Rational polygons in \texorpdfstring{\( \R\times [-1,1]
    \)}{R x [-1,1]} or with collinear
  interior points}
\label{sec:rational_polygons_in_m1p1}

In this short section, we will classify \( k \)-maximal polygons that
can be realized in \( \R \times [-1,1] \) with given number of
interior lattice points. This case occurs in our classifications of
\( k \)-maximal polygons with no interior lattice points in
section~\ref{sec:rational_polygons_without_interior_lattice_points}
and exactly one interior lattice point in
section~\ref{sec:rational_polygons_with_exactly_one_interior_lattice_points},
hence we treat it here seperately. As an application, we classify \(
k \)-rational polygons with collinear interior lattice points.

\begin{prop}
	\label{polygons_in_m11}
	Every rational polygon $P \subseteq \R \times [-1,1]$ with $i \in
		\Z_{\geq 0}$ interior lattice points can be realized in
	\(\R\times[-1,0]\ \cup\ T\), with the half-open trapezoid
	\[
		T := \convhull\left((0,0), (i+1,0), (i+2,1), (0,1)\right)
		\setminus (\R \times \{0\}).
	\]
\end{prop}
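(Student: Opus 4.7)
The plan is to reduce, via translations, shears, and a possible reflection in $y$, to fitting the upper half of $P$ inside the closed trapezoid $\overline{T}$. First, since $P \subseteq \R \times [-1,1]$, any interior lattice point has $y$-coordinate in $\Z \cap (-1,1) = \{0\}$, so all $i$ interior lattice points lie on the $x$-axis. Because $P^{\circ} \cap (\R \times \{0\})$ is an open segment on a line, they are consecutive integers, so after a translation in the $x$-direction I may assume they are exactly $(1,0), \dots, (i,0)$. Writing the cross-section at $y=0$ as $[x_L, x_R] \times \{0\}$, the fact that $(0,0)$ and $(i+1,0)$ are not interior lattice points forces $x_L \in [0,1)$ and $x_R \in (i, i+1]$; in particular the width $x_R - x_L$ is at most $i+1$.

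The lower half $P \cap (\R \times [-1,0])$ sits inside $\R \times [-1,0]$ automatically. The remaining task is to place the upper half $P^+ := P \cap (\R \times [0,1])$ inside $\overline{T}$. Let $L, R \colon [0, y_{\max}] \to \R$ be the convex left-boundary and concave right-boundary functions of $P^+$. Under the shear $\sigma_m \colon (x,y) \mapsto (x+my,y)$, which is unimodular and fixes the $x$-axis pointwise (so it preserves the interior lattice points), the containment $\sigma_m(P^+) \subseteq \overline{T}$ is equivalent to
\[
  -L(y)/y \;\leq\; m \;\leq\; 1 + (i+1-R(y))/y \qquad \text{for all } y \in (0, y_{\max}],
\]
defining an interval $I_+ \subseteq \R$ of admissible real shears; applying the reflection $(x,y) \mapsto (x,-y)$ before this process yields an analogous interval $I_-$ coming from the lower half.

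The main claim is that at least one of $I_+, I_-$ contains an integer. I would argue this as follows. Concavity of the width function $W := R-L$ on its full domain $[y_{\min}, y_{\max}]$ forces $W'(0^-) \geq W'(0^+)$. Combined with the bound $W(0) \leq i+1$ and the structural constraints $L(0) \in [0,1)$, $R(0) \in (i, i+1]$ from the first step, this one-sided derivative inequality transfers into a length estimate for $I_+$ and $I_-$ that forces an integer to lie in at least one of them. Picking such an integer $m$ and composing $\sigma_m$ with the translation from the first step (and, if needed, the reflection in $y$) produces the desired affine unimodular transformation realizing $P$ inside $\R \times [-1,0] \cup T$.

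The hardest step will be establishing the length estimate that guarantees an integer in $I_+ \cup I_-$. The endpoints of $I_\pm$ are infima of the rational functions $L(y)/y$ and $(i+1-R(y))/y$ over $(0, y_{\max}]$, and these couple $L$ and $R$ at possibly different heights, so existence of an integer shear is not immediate from concavity of $W$ alone. I expect to reduce to the cases where the relevant infima are attained at vertices of $P^\pm$ and then dispatch the boundary cases $L(0) = 0$, $R(0) = i+1$, $y_{\max} = 1$ separately, using the explicit linear form of $L$ and $R$ along the edges of $P$ adjacent to the cross-section at $y=0$.
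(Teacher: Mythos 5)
Your reduction is set up correctly and follows the same strategy as the paper: translate so that the interior lattice points are $(1,0),\dots,(i,0)$ and the cross-section at $y=0$ lies in $[0,i+1]\times\{0\}$, then look for an integer shear (after an optional reflection in $\R\times\{0\}$) carrying the part of $P$ above the $x$-axis into $\overline{T}$. But the entire content of the proposition is your ``main claim'' that $I_+\cup I_-$ contains an integer, and you do not prove it: you propose to derive it from concavity of the width function $W=R-L$ via $W'(0^-)\ge W'(0^+)$, then correctly observe that the endpoints of $I_\pm$ couple $L$ and $R$ at different heights so this is ``not immediate,'' and defer the hardest step to an unexecuted case analysis. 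The gap is real, and the proposed tool is insufficient: $W$ alone cannot decide the matter, because the endpoints of $I_\pm$ depend on the left and right boundaries separately, and the one-sided derivatives at $0$ do not control the relevant extremal slopes, which are a global feature of $P$.

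Here is how the paper closes exactly this gap. In your notation the endpoints of $I_+$ are $-t$ and $i+2-s$, where $t=\min_{y>0}L(y)/y$ and $s=(i+1)+\max_{y>0}\bigl(R(y)-(i+1)\bigr)/y$; geometrically, $(t,1)$ and $(s,1)$ are the intersections with $\R\times\{1\}$ of the supporting lines of $P$ through $(0,0)$ and $(i+1,0)$ that touch $P$ at points with positive second coordinate. Hence $I_+$ is a closed interval of length $(i+2)-(s-t)$, and it contains an integer as soon as $s-t\le i+1$, i.e. as soon as these two lines are parallel or meet in $\R\times(0,\infty)$. This can fail for at most one of $P$ and its reflection: if there were $y_1,y_2>0$ with $\frac{R(y_2)-(i+1)}{y_2}>\frac{L(y_1)}{y_1}$ and also $u_1,u_2>0$ with $\frac{R(-u_2)-(i+1)}{u_2}>\frac{L(-u_1)}{u_1}$, then convexity of $L$ across $y=0$ together with $L(0)\ge 0$ gives $\frac{L(y_1)}{y_1}+\frac{L(-u_1)}{u_1}\ge 0$, while concavity of $R$ together with $R(0)\le i+1$ gives $\frac{R(y_2)-(i+1)}{y_2}+\frac{R(-u_2)-(i+1)}{u_2}\le 0$, and adding the two strict inequalities contradicts these bounds. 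Note that this argument uses convexity of $L$ and concavity of $R$ separately, together with the normalization $L(0)\ge 0$, $R(0)\le i+1$ --- not concavity of $W$. Supplying it (or the equivalent supporting-line picture in the paper's proof) completes your argument; without it the proposal only reformulates the statement.
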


\begin{proof}
	After a suitable translation, we can assume the intersection of the
	interior of \( P \) with \( \R \times \{0\} \) is contained in \(
	(0,i+1) \times \{0\} \). Furthermore, we can assume this intersection
	to be non-empty, otherwise we can realize \( P \) in \( \R
	\times [-1,0] \). Consider the points \(a_1 := (0,0) \) and \( a_2 :=
	(i+1,0).\) Then we find unique supporting lines \( L_1 \) and \( L_2
	\) of \( P \) with \( a_j \in L_j \) such that \( L_1 \) and \( L_2
	\) both contain a point of \( P \) with positive second coordinate.
	Since we can reflect on \( \R \times \{0\} \), we can assume \( L_1
	\) and \( L_2 \) to be either parallel or intersect in a point of \(
	\R \times (0,\infty) \). Therefore, after a suitable horizontal
	shearing, we can assume that \( L_1 \) intersects \( \R\times \{1\}
	\) in a point of \( [0,1] \times \{1\} \) and \( L_2 \) intersects \(
	\R \times \{1\} \) in a point of \( (-\infty, i+2] \times \{1\} \).
	But this implies \( P \subseteq \R \times [-1,0] \cup T \).
\end{proof}

\begin{figure}[H]
	\caption{
		All rational polygons in \( \R \times [-1,1] \) with \( i
		\) interior lattice points can be realized in \( \R \times [-1,0] \cup T \)}
	\label{fig:classify_maximal_polygons_m1p1}
	\begin{tikzpicture}[x=1cm,y=1cm]

		\draw[step=1,black,line width=1pt,xshift=0cm,yshift=0cm,
			opacity=0.3] (-0.9,0.3) grid (6.9,-2.32);

		\fill[line width=2pt,color=black,fill=black,fill
			opacity=0.2] (1,0) -- (6,0) -- (5,-1) -- (7,-1) --
		(7,-2) -- (-1,-2) -- (-1,-1) -- (1,-1) -- cycle;

		\draw [line width=2pt,color=black] (-1,-1) -- (1,-1) -- (1,0) -- (6,0) -- (5,-1) -- (7,-1);
		\draw [line width=2pt,color=black] (-1,-2) -- (7,-2);
		\draw [line width=2pt,color=black,dotted] (1,-1) -- (5,-1);

		\draw [fill=black] (1,0) circle (3pt);
		\draw [fill=black] (1,-1) circle (3pt) node[below]{$a_1$};
		\draw [fill=black] (6,0) circle (3pt);
		\draw [fill=black] (5,-1) circle (3pt) node[below]{$a_2$};

		\node at (2.5,-0.5) {$T$};

		\node at (7.3,0) {1};
		\node at (7.3,-1) {0};
		\node at (7.3,-2) {-1};

		\node at (0,-2.6) {-1};
		\node at (1,-2.6) {0};
		\node at (2,-2.6) {1};
		\node at (3,-2.6) {\dots};
		\node at (4,-2.6) {$i$};
		\node at (5,-2.6) {$i+1$};
		\node at (6,-2.6) {$i+2$};

	\end{tikzpicture}

\end{figure}
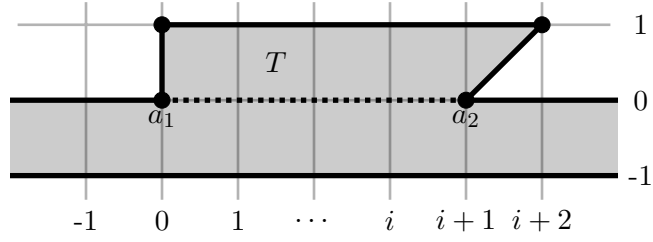

\begin{con}
	\label{cns:surrounding_polygon_m1p1}
	Consider a \( k \)-rational polygon \( P \subseteq \R \times
	[-1,0] \cup T \) with \( P \cap T \neq \emptyset \).
	Then there are supporting lines \( L_1 \) and \( L_2 \) as
	in the proof of the Proposition above. Consider the associated affine
	halfplanes \( H_1 \) and \( H_2 \) with \( P
	\subseteq H_j \). We define the
	\emph{surrounding polygon} of \( P \) to be the \( k \)-rational
	polygon
	\[
		P_{\sur}\ :=\ \convhull\left(Q \cap
		\frac{1}{k}\Z^2\right),\quad \text{where}\ Q\ :=\
		\R\times [-1,1]\ \cap\ H_1 \cap H_2.
	\]
	Note that we have \( P \subseteq P_{\sur} \subseteq \R \times [-1,0]
	\cup T \) and \( i(P) = i(P_{\sur}) = i \). In particular, if \( P \)
	is \( k \)-maximal, we have \( P = P_{\sur} \).
\end{con}

\begin{defi}
	\label{def:halfplane_by_points}
	For two distinct points \( v,w \in \R^2 \), we write \( H(v,w) \) for
	the closed affine halfplane given by all points to the right of the line
	through \( v\) and \( w \) with respect to the direction vector \( w
	- v \)
\end{defi}

\begin{algo}
	\label{algo:classify_maximal_polygons_m1p1}
	Given integers \( k \in \Z_{\geq 1} \) and \( i \in \Z_{\geq 0}
	\), we can classify all \( k \)-maximal polygons with \( i \)
	interior lattice points in \( \R \times [-1,1] \) as follows: Consider
	\( a_1 := (0,0) \) and \( a_2 := (i+1,0) \). To any pair
	of points \( x_1,x_2 \in T \), we associate the polygon
	\[
		P := \convhull\left(Q \cap \frac{1}{k}\Z^2\right), \quad
		\text{where}\ Q := \R \times [-1,1] \cap H(a_1,x_1) \cap
		H(x_2,a_2)
	\]
	This defines a possible surrounding polygon. Going through all
	polygons that arise this way and filtering out those that are \( k
	\)-maximal (for instance, using
	Proposition~\ref{prp:k_maximal_criterion}) gives us all \( k
	\)-maximal in \( \R \times [-1,1] \) having \( i \) interior lattice
	points, excluding those with \( P \cap T = \emptyset \), i.e. those
	that can be realized in \( \R \times [0,1] \). However, polygons
	contained in \( \R \times [0,1] \) are not \( k \)-maximal, hence we
	already have all \( k \)-maximal polygons.
\end{algo}

We turn to the case of collinear interior lattice points, which is
tightly related to being realizable in \( \R \times [-1,1] \).
Clearly, a polygon in \( \R \times [-1,1] \) has collinear interior
lattice points. The converse holds for lattice polygons with at least
two collinear interior lattice points, which Koelman~\cite[Section
	4.3]{Koe91} used to completely describe them. The following
proposition generalizes this fact to \( k \)-rational polygons.

\begin{prop}
	Let $P\subseteq \R^2$ be $k$-rational polygon with collinear interior
	lattice points and $i(P)>k$. Then $P$ can be realized in $\R \times
		[-1,1]$.
\end{prop}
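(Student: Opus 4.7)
My plan is to prove $P \subseteq \R \times [-1, 1]$ after applying an affine unimodular transformation. Specifically, I may assume the interior lattice points are $(1, 0), (2, 0), \ldots, (i, 0)$ on the $x$-axis, with $i = i(P) > k$. The goal then reduces to showing $y_{\max}(P) \leq 1$ and $y_{\min}(P) \geq -1$; by symmetry under $(x, y) \mapsto (x, -y)$, it suffices to prove the upper bound.

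Arguing by contradiction, suppose $y_{\max}(P) > 1$ and let $v = (a/k, b/k)$ be a vertex achieving this maximum; since $P$ is $k$-rational, this forces $b \geq k+1$. I plan to exhibit an interior lattice point of $P$ on the line $y = 1$, which directly contradicts collinearity of the interior lattice points on $y = 0$. The candidate lies inside the chord of the triangle $T := \convhull((1, 0), (i, 0), v) \subseteq P$ at height $y = 1$. Because $(1, 0), (i, 0) \in P^\circ$ and $y_v = b/k > 1$, a brief convex-combination argument (every chord point has $v$-coefficient $1/y_v < 1$) shows this chord $T \cap \{y = 1\} = [x_L^*, x_R^*] \times \{1\}$ lies entirely in $P^\circ$, where $x_L^* = 1 + (a-k)/b$ and $x_R^* = i + (a-ik)/b$.

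The core task is then to produce an integer $n \in [x_L^*, x_R^*]$. Rearranging, this inequality is equivalent to $a \in [bn - bi + ik,\ bn - b + k]$, an interval of length $(b-k)(i-1)$. As $n$ ranges over $\Z$, these intervals are translates of one another by $b$, so their union covers all of $\Z$ precisely when $(b-k)(i-1) \geq b - 1$. Using $i - 1 \geq k$ and $b \geq k + 1$, I plan to verify the chain $(b-k)(i-1) - (b-1) \geq k(b-k) - (b-1) = (k-1)(b-k-1) \geq 0$, which produces the required $n$.

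The main obstacle is precisely this combinatorial inequality. It is tight in the extreme case $i = b = k+1$, where each shifted interval has length exactly $k$ and its $k+1$ consecutive integer points cover each residue class modulo $b = k+1$ exactly once; this is the reason the hypothesis $i > k$ is sharp. Once such $n$ is found, $(n, 1) \in P^\circ$ is a lattice point off the line $y = 0$, contradicting collinearity and establishing $y_{\max}(P) \leq 1$, which completes the proof.
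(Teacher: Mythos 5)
Your proof is correct, and it takes a genuinely different route from the paper's. Both arguments normalize the interior lattice points to consecutive points on the $x$-axis, assume a point of $P$ above height $1$, and derive a contradiction by producing an interior lattice point at height $1$. The paper gets there by first exhibiting a $k$-rational point of $P$ at height exactly $1+\frac{1}{k}$ (via an intercept-theorem length estimate on that horizontal slice, which only works for $k \geq 2$ and forces a separate treatment of $k=1$ through the empty-triangle lemma), then shearing so that this point has small first coordinate and checking two subcases to locate $(0,1)$ or $(1,1)$ inside $P$. You instead work directly with the topmost vertex $v=(a/k,b/k)$, slice the triangle $\convhull((1,0),(i,0),v)$ at height $1$, and reduce the existence of a lattice point on that chord to the covering of $\Z$ by the integer-endpoint intervals $[bn-bi+ik,\ bn-b+k]$, which holds exactly when $(b-k)(i-1)\geq b-1$; this is verified by the factorization $k(b-k)-(b-1)=(k-1)(b-k-1)\geq 0$. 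Your approach buys uniformity (no case split on $k$, no shearing normalization, no auxiliary lemma) and makes visible exactly where the hypothesis $i>k$ enters and why it is tight; the paper's approach buys a more pictorial argument in which the final contradiction is read off from a single normalized triangle. One small point worth making explicit in a write-up: the interval endpoints $bn-bi+ik$ and $bn-b+k$ are integers (since $kv\in\Z^2$ gives $a,b\in\Z$), which is what legitimizes replacing ``the gap has length at most $1$'' by ``the gap contains no integer.''
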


\begin{proof}
	After an affine unimodular transformation, we can assume the interior
	lattice points of \( P \) to be of the form \( (j,0) \) for \( 0 \leq
	j \leq i(P)-1 \). Suppose \( P \) is not contained in \( \R \times
	[-1,1] \). After a reflexion, we can assume that there exists a
	vertex \( v=(x,y) \) of \( P \) with \( y > 1 \). We intend to show
	that there exists a \( k \)-rational point \( q \in P \cap (\R \times
	\{1+\frac{1}{k}\}) \). If \( y = 1 + \frac{1}{k} \), we can take \( q
	= v \). Otherwise, we have \( y \geq 1 + \frac{2}{k} \) and we
	consider the line segment \( L := P \cap (\R \times \{1+\frac{1}{k}\}
	\). We can estimate its length using the intercept theorem to obtain
	\( \length(L) \geq \frac{k}{k+2} \). If \( k \geq 2 \), this implies
	\( \length(L) \geq \frac{1}{k} \) and therefore there exists a \( k
	\)-rational point \( q = (\frac{a}{k},1+\frac{1}{k}) \in L \). After
	a horizontal sheraring, we can assume \( 0 \leq a \leq k \). Consider
	the triangle \(Q\ :=\ \convhull(q, (0,0),(i(P)-1,0))\) and note that
	\( Q \setminus \{q\} \subseteq P^{\circ} \). If \( a = 0 \), we have
	\( (0,1) \in Q \). If \( a > 0 \), then \( i(P) > k \) implies \(
	(1,1) \in Q \). Thus in both cases, we have an additional interior
	lattice point of \( P \), a contradiction.

	It remains to consider the case \( k = 1 \). Take any vertex \( v =
	(x,y) \in P \) with \( y \geq 1 \). Then the triangle \( \convhull(v,
	(0,0),(1,0)) \) does not contain any lattice points other than its
	vertices. Lemma~\ref{lem:empty_triangles_lattice_distance} then
	implies \( y = 1 \).
\end{proof}

Using Algorithm~\ref{algo:subpolygons}, we can determine all
subpolygons of the \( k \)-maximal polygons in \( \R \times [-1,1]
\). For \( i > k \), the preceding Proposition implies that this
gives us all \( k \)-rational polygons with \( i \) collinear
interior lattice points. We arrive at the following classification.

\begin{class}[Data available at~\cite{springer_2024_13928298}]
	\label{class:collinear_interior_lattice_points}
	Below are the numbers of \( 2 \)-rational polygons with \( i \)
	collinear interior lattice points.
	\begin{center}
		{\scriptsize
			\begin{tabular}{c|cccccc}
				\( i \)       & 3                & 4                & 5             & 6             & 7           & 8  \\
				\#Polygons    & 168\,640         & 504\,530         & 1\,279\,695   & 2\,881\,106   & 5\,924\,808
				              & 11\,343\,912                                                                           \\\hline
				\( i \)       & 9                & 10               & 11            & 12            & 13          & 14 \\
				\#Polygons    & 20\,496\,555     & 35\,295\,876     & 58\,364\,056  & 93\,212\,470  &
				144\,449\,999 & 218\,021\,550                                                                          \\\hline
				\( i \)       & 15               & 16               & 17            & 18            & 19          & 20 \\
				\#Polygons    & 321\,478\,832    & 464\,285\,436    & 658\,158\,267 & 917\,447\,376
				              & 1\,259\,556\,240 & 1\,705\,404\,538
			\end{tabular}
		}
	\end{center}
	Below are the numbers of \( 3 \)-rational polygons with \( i \)
	collinear interior lattice points.
	\begin{center}
		{\small
			\begin{tabular}{c|cccc}
				\( i \)    & 4             & 5                & 6 \\\hline
				\#Polygons & 546\,989\,533 & 2\,150\,324\,427
				           & 7\,124\,538\,405
			\end{tabular}
		}
	\end{center}
\end{class}

Recall that lattice polygons with collinear interior lattice points
were classified by Koelman~\cite[Section 4.3]{Koe91}. In particular,
he showed that there are exactly \( \frac{1}{6}(i+3)(2i^2+15i+16) \)
lattice polygons with \( i \geq 2 \) collinear interior lattice
points. Using our values for \( k = 2 \), we found that they too can
be described by a polynomial. Hence we raise the following conjecture,
which we know to be true up to \( i = 20 \).

\begin{conj}
	\label{cnj:number_of_half_integral_polygons_collinear_interior}
	There are exactly
	\[
		\frac{1}{1260}(i + 1) (512 i^6 + 12\,928 i^5 + 137\,740 i^4 +
		685\,145 i^3 + 1\,582\,743 i^2 + 1\,665\,222 i + 710\,640)
	\]
	half-integral polygons with \( i \geq 3 \) collinear interior lattice
	points.
\end{conj}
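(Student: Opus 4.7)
The plan is to adapt Koelman's enumeration for lattice polygons in \cite{Koe91} to the half-integral setting. By the preceding proposition, for \( i \geq 3 > k = 2 \) every such polygon \( P \) can be realized in \( \R \times [-1,1] \), and after an affine unimodular transformation I may assume the interior lattice points of \( P \) are exactly \( (0,0), (1,0), \dots, (i-1,0) \). The boundary of \( P \) then splits into an upper chain \( C_+ \) lying in \( \R \times [0,1] \) and a lower chain \( C_- \) lying in \( \R \times [-1,0] \), each joining a leftmost point on \( \R \times \{0\} \) to a rightmost point. Because \( P \) is \( 2 \)-rational, the vertices of \( C_\pm \) have coordinates in \( \tfrac{1}{2}\Z \) and lie on the five horizontal lines \( y \in \{-1, -\tfrac12, 0, \tfrac12, 1\} \).

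First, I would enumerate the possible \emph{types} of a pair \( (C_+, C_-) \), where a type records, in order, the heights visited by the vertices of each chain. Convexity makes this list finite and short (at most a handful of vertices per height). For a fixed type, the \( x \)-coordinates of the free vertices range over a rational polytope \( \Pi \subseteq \R^d \) cut out by convexity inequalities together with the conditions that none of the forbidden lattice points \( (-1,0), (i,0), (j,\pm 1) \) for \( 0 \leq j \leq i-1 \) lie in \( P^\circ \). The number of admissible half-integral tuples is then the number of lattice points in the \( i \)-dependent dilate of \( \Pi \), hence a quasi-polynomial in \( i \) by Ehrhart theory. Since the defining inequalities all have half-integral coefficients, a parity argument shows each partial count is in fact a polynomial (or else cancels with a paired type).

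Next, I would sum over all types and then apply Burnside to quotient by the residual affine unimodular stabilizer of the normalization, which is generated by the reflections \( (x,y) \mapsto (i-1-x, y) \) and \( (x,y) \mapsto (x, -y) \); the contributions of fixed configurations are themselves polynomial in \( i \) of strictly smaller degree. A dimension count supports the degree \( 7 \) of the claimed polynomial: each of \( C_+ \) and \( C_- \) contributes up to three free \( x \)-coordinates (one per non-boundary height) scaling linearly in \( i \), and one further parameter comes from the horizontal translation freedom of the interior axis. Once polynomiality is established, uniqueness of the interpolating polynomial together with the nine values \( i = 3, \dots, 11 \) from Classification~\ref{class:collinear_interior_lattice_points} pins the polynomial down, and agreement on the remaining tabulated values up to \( i = 20 \) provides an independent check.

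The main obstacle is the bookkeeping: exhaustively enumerating admissible chain types, verifying that the counts within each type are polynomial rather than merely quasi-polynomial (which may require pairing up types whose fractional parts cancel), and correctly isolating the finitely many small-\( i \) exceptions that explain why the formula is asserted only from \( i = 3 \) onward. A secondary obstacle is confirming that no \( 2 \)-maximal polygon with collinear interior lattice points escapes the normalization into \( \R \times [-1,1] \) for \( i = 3 \), where the cushion \( i > k \) is tight; this amounts to rechecking the boundary case of the preceding proposition by hand.
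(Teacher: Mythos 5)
The statement you are asked to prove is presented in the paper as a \emph{conjecture}: the authors give no proof at all, only a computational verification of the formula for $3 \leq i \leq 20$ against the data of Classification~\ref{class:collinear_interior_lattice_points}, which they produced by running Algorithm~\ref{algo:classify_maximal_polygons_m1p1} followed by Algorithm~\ref{algo:subpolygons}. So there is no proof in the paper to compare against; any complete argument you supplied would go strictly beyond what the authors claim.

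As a research program your outline is sensible and is the natural half-integral analogue of Koelman's chain-decomposition treatment of the lattice case, which the paper cites for the polynomial $\frac{1}{6}(i+3)(2i^2+15i+16)$. But as written it is a plan, not a proof, and the load-bearing steps are exactly the ones you defer. First, you never establish that the total count is a polynomial in $i$ rather than a genuine quasi-polynomial: the ``parity argument'' and the suggestion that fractional parts ``cancel with a paired type'' are asserted, not carried out, and for half-integral data a period-two quasi-polynomial is the generic outcome of the Ehrhart-theoretic argument you invoke. Second, interpolation from the tabulated values only determines the polynomial once you have \emph{both} polynomiality and an a priori degree bound of $7$; your degree count is a heuristic dimension argument, not a proof. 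Third, the exhaustive enumeration of admissible chain types, the verification of the convexity and forbidden-point constraints, and the Burnside correction for the stabilizer are substantial bookkeeping that is described rather than done. One small point in your favour: your worry about $i=3$ being a tight boundary case for the realizability proposition is unfounded, since that proposition only requires $i(P) > k$ and $3 > 2$ holds, so realization in $\R \times [-1,1]$ is immediate. In summary, the approach is promising and plausibly could close the conjecture, but what you have written leaves the hard parts open --- as does the paper itself.
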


\section{Rational polygons without interior lattice points}
\label{sec:rational_polygons_without_interior_lattice_points}

We give a classification algorithm for \( k \)-maximal polygons with
no interior lattice points. Note that for \( k = 2 \), the four \( k
\)-maximal polygons with no interior lattice points are
known~\cite[Section 2]{AKW17}. With our approach, we obtain results
up to \( k = 20 \), see
Classification~\ref{class:maximal_polygons_with_no_interior_lattice_points}.

The main tool that allows us to get good bounds for maximal polygons
with no interior lattice points is the lattice width. Recall
that the \emph{lattice width} of a rational polygon \( P \) in
direction \( w \in \Z^2 \setminus \{0\} \) is defined as
\[
	\width_w(P)\ :=\ \max_{v \in P} \langle v,w \rangle - \min_{v
		\in P} \langle v,w\rangle\ \in\ \Z^2.
\]
The \emph{lattice width} \( \lw(P) \) of \( P \) is defined to be the
minimum over all \( \width_w(P) \) with \( w \in \Z^2 \backslash
\{0\} \). We call \( w \in \Z^2 \setminus \{0\} \) a \emph{lattice
	width direction} of \( P \), if \( \lw(P) = \width_w(P) \).

\begin{lemma}
	\label{lem:convex_body_without_interior_lattice_points_width}
	Every rational polygon \( P \) without interior lattice points
	can be realized in \( \R \times [-1,2] \).
\end{lemma}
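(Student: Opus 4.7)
My plan is to combine a lattice width bound with a straightforward translation argument. The goal is to find a primitive direction $w \in \Z^2$ with $\width_w(P) \leq 2$, apply the corresponding $\GL(2,\Z)$-transformation sending $w$ to $e_2$, and then translate by a suitable lattice vector to fit $P$ inside $\R \times [-1,2]$.

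The embedding step is a routine calculation once the width bound is in hand: if $w \in \Z^2$ is a primitive direction with $\width_w(P) \leq 2$, then extending $w$ to a $\Z$-basis gives $U \in \GL(2,\Z)$ sending $w$ to $e_2$, so after replacing $P$ by $UP$ we have $P \subseteq \R \times [a,b]$ with $b - a \leq 2$. Setting $t := -1 - \lfloor a \rfloor \in \Z$ (or $t := -1 - a$ if $a \in \Z$), one checks that $a + t \in [-1,0]$ and $b + t \leq (a+t) + 2 \leq 2$, so $P + (0,t) \subseteq \R \times [-1,2]$ as desired.

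The substantive step is therefore the bound $\lw(P) \leq 2$ for rational polygons without interior lattice points. My preferred route is via the classification of maximal lattice-free convex bodies in $\R^2$ due to Lov\'asz: any convex body $K \subseteq \R^2$ with $K^\circ \cap \Z^2 = \emptyset$ is contained in a maximal one, and up to affine unimodular equivalence these are either a strip between two parallel lattice lines (lattice width $1$), a lattice-free triangle with a lattice point in the relative interior of each edge, or a lattice-free quadrilateral with the analogous property. For each type realised with rational vertices, a direct case analysis yields lattice width at most $2$, and $P$ inherits the bound by being contained in such a maximal set.

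The hard part will be the width bound itself. Hurkens' flatness theorem only provides the weaker estimate $\lw(P) \leq 1 + 2/\sqrt{3} \approx 2.155$ for arbitrary convex bodies, with equality attained by a specific \emph{irrational} equilateral triangle. The sharper bound $\lw(P) \leq 2$ therefore genuinely uses the rationality of $P$ in order to exclude this extremal case --- this exclusion, together with the accompanying case analysis of maximal lattice-free triangles and quadrilaterals (or equivalently, an appeal to the lattice width constraints developed in~\cite{Boh23}), is where the real content of the proof lies.
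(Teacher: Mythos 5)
Your reduction of the lemma to the bound \( \lw(P) \leq 2 \) contains a genuine gap: that bound is false for rational polygons without interior lattice points. The extremal (Hurkens) triangle \( \Delta \) with \( \lw(\Delta) = 1 + 2/\sqrt{3} \approx 2.155 \) is hollow, and although its vertices are irrational, one can shrink it by a factor \( \lambda \) slightly less than \( 1 \) about an interior point and then choose a rational triangle \( Q \) squeezed between the shrunken copy and \( \Delta \) itself. Such a \( Q \) satisfies \( Q^{\circ} \cap \Z^2 = \emptyset \) (indeed \( Q \cap \Z^2 = \emptyset \)) and \( \lw(Q) \geq \lambda\left(1 + 2/\sqrt{3}\right) > 2 \) for \( \lambda \) close to \( 1 \). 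Rationality only excludes the single extremal body, not the hollow bodies of lattice width strictly between \( 2 \) and \( 1 + 2/\sqrt{3} \), among which rational polygons are dense. For the same reason the route via the classification of maximal lattice-free convex sets cannot deliver width \( \leq 2 \): the maximal lattice-free triangles with one lattice point in the relative interior of each edge include the Hurkens triangle and rational triangles arbitrarily close to it, and in any case the maximal lattice-free set containing a given rational \( P \) need not itself be rational.

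The statement you should be aiming for is weaker than \( \lw(P) \leq 2 \) and is what the paper actually proves: choosing coordinates so that \( (1,0) \) is a lattice width direction and \( P \subseteq [x_l,x_r] \times \R \), the open interval \( (x_l,x_r) \) contains at most two integers. This follows from the lower bounds of \cite[Corollary 2.13]{Boh23} on \( |P^{\circ} \cap (\{h\} \times \Z)| \) for integral \( h \) strictly between \( x_l \) and \( x_r \); three or more interior integral lines would force an interior lattice point. With at most two such lines, say at \( h \) and \( h+1 \), one gets \( P \subseteq [h-1,h+2] \times \R \), hence a realization in \( \R \times [-1,2] \) after an integral translation and a coordinate swap. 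Note that this conclusion is compatible with \( \lw(P) \) exceeding \( 2 \) (and even equalling \( 3 \) when \( x_l, x_r \) are integral): the target strip \( \R \times [-1,2] \) has width three, and the content of the lemma is the position of \( P \) relative to the lattice lines, not a width-two bound. Your final translation step is fine as written, but it is attached to a premise that cannot be established.
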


\begin{proof}
	After an affine unimodular transformation, we may assume that \(
	(1,0) \) is a lattice width direction vector of \( P \) and \( P
	\subseteq [x_l,x_r] \times \R \) for \( x_l,x_r \in \R \). Then
	\cite[Corollary 2.13]{Boh23} gives lower bounds for the numbers \(
	|P^{\circ} \cap (\{h\} \times \Z)| \), where \( x_l \leq h
	\leq x_r \) and \( h \in \Z \). Since \( P \) does not have
	interior lattice points, these imply that there are at most two
	interior integral vertical lines of \( P \), hence \( x_r - x_l
	\leq 3 \) and the claim follows.
\end{proof}

\begin{prop}
	\label{prp:convex_body_without_interior_lattice_points}
	Every rational polygon \( P \) without interior lattice points
	can be realized in \( A \cup (\R\times [0,1]) \cup B \), with the two half-open trapezoids
	\begin{align*}
		A\ := & \ \convhull\left((0,1), (1,1), (2,2), (-1,2)\right)
		\setminus (\R \times \{1\}),                                \\
		B\ := & \ \convhull\left((0,0),(-1,-1),(2,-1),(1,0)\right)
		\setminus (\R \times \{0\}).
	\end{align*}
\end{prop}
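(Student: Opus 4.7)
The plan is to leverage Lemma~\ref{lem:convex_body_without_interior_lattice_points_width} to first place $P \subseteq \R \times [-1, 2]$ and then split into cases according to whether $P$ extends above $y = 1$, below $y = 0$, both, or neither. If neither, then $P \subseteq \R \times [0, 1]$ and we are trivially inside $A \cup (\R \times [0,1]) \cup B$.

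In the asymmetric cases, I would invoke Proposition~\ref{polygons_in_m11} with $i = 0$ after an appropriate vertical shift or reflection. For instance, if $P \subseteq \R \times [0, 2]$ extends above $y = 1$, then shifting $P$ down by $(0, 1)$ places it inside $\R \times [-1, 1]$, so Proposition~\ref{polygons_in_m11} realizes it in $\R \times [-1, 0] \cup T$ with $T = \convhull((0,0),(1,0),(2,1),(0,1)) \setminus (\R \times \{0\})$. After shifting back up, the translated trapezoid $T + (0, 1)$ is easily checked to be contained in $\overline{A}$, so $P$ lies in $(\R \times [0,1]) \cup A$. The symmetric case (extending only below $y = 0$) is handled by pre-composing with the reflection $(x,y) \mapsto (x, -y)$ and verifying that the reflected $T$ sits inside $\overline{B}$.

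The main case is when $P$ extends both above $y = 1$ and below $y = 0$. Here convexity forces $I_0 := P^{\circ} \cap (\R \times \{0\})$ and $I_1 := P^{\circ} \cap (\R \times \{1\})$ to be non-empty open segments, and since they contain no lattice points each has length at most $1$, so its closure fits into some unit interval $[n_j, n_j + 1] \times \{j\}$ with $n_j \in \Z$. An integer horizontal translation aligns $\overline{I_0} \subseteq [0, 1] \times \{0\}$, and then the integer shear $(x, y) \mapsto (x + my, y)$ with a suitable $m \in \Z$ aligns $\overline{I_1} \subseteq [0, 1] \times \{1\}$; this works because the shear fixes the row $y = 0$ pointwise while translating the row $y = 1$ by the integer $m$.

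The hard part is then showing that $P$ fits in $\overline{A} \cup (\R \times [0,1]) \cup \overline{B}$ via a convexity argument on the left and right boundary functions $\alpha$ and $\beta$ of $P$, which are convex and concave in $y$ respectively. The alignment yields $\alpha(0), \alpha(1), \beta(0), \beta(1) \in [0, 1]$, so the slope of $\alpha$ on $[0, 1]$ is at least $-1$ and the slope of $\beta$ on $[0, 1]$ is at most $1$. Convexity of $\alpha$ then propagates this lower bound to $[1, y_{\max}]$, giving $\alpha(y) \geq \alpha(1) - (y - 1) \geq 1 - y$, and concavity of $\beta$ gives $\beta(y) \leq y$; these are precisely the defining inequalities of $\overline{A}$ at row $y$. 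The analogous propagation to $[y_{\min}, 0]$ gives $\alpha(y) \geq y$ and $\beta(y) \leq 1 - y$, the defining inequalities of $\overline{B}$. The final conclusion $P \subseteq A \cup (\R \times [0, 1]) \cup B$ then follows from the observation that $\overline{A} \setminus A$ and $\overline{B} \setminus B$ lie in the middle strip $\R \times [0, 1]$.
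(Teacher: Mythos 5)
Your proof is correct, and its skeleton coincides with the paper's: reduce to $\R \times [-1,2]$ via Lemma~\ref{lem:convex_body_without_interior_lattice_points_width}, dispatch the cases where $P^{\circ}$ misses $\R \times \{0\}$ or $\R \times \{1\}$ through Proposition~\ref{polygons_in_m11}, and normalize $\overline{I_0} \subseteq [0,1]\times\{0\}$, $\overline{I_1} \subseteq [0,1]\times\{1\}$ by an integral translation and shear. The two arguments part ways only at the last step. The paper argues by contradiction: using the symmetries $(x,y)\mapsto(1-x,y)$ and $(x,y)\mapsto(x,1-y)$ of the configuration, it suffices to exclude one corner region, and a point $v$ there together with $P_0 \cup P_1$ spans a subpolygon with $(0,0)$ as an interior lattice point. (As printed, the paper's excluded region ``$x<0$ and $y>x$'' is too large: it contains admissible points of $\R\times[0,1]$ such as $(-1,\tfrac12)$, which a polygon in the main case can perfectly well contain --- e.g.\ $\convhull\bigl((-1,\tfrac12),(\tfrac12,\tfrac{11}{10}),(\tfrac12,-\tfrac{1}{10})\bigr)$ --- and for such $v$ the asserted interior lattice point does not materialize; the intended region is $y<0$, $x<y$ together with its three symmetric images.) You instead prove the containment directly: convexity of the left boundary $\alpha$ and concavity of the right boundary $\beta$, combined with $\alpha(0),\alpha(1),\beta(0),\beta(1)\in[0,1]$ and monotonicity of secant slopes, yield exactly the inequalities $1-y\leq x\leq y$ for $y>1$ and $y\leq x\leq 1-y$ for $y<0$ defining $\overline{A}$ and $\overline{B}$. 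This is the same convexity principle in a form that is airtight, and you also make explicit the reduction of the asymmetric cases (checking $T+(0,1)\subseteq\overline{A}$ and the reflected $T\subseteq\overline{B}$), which the paper leaves implicit.
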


\begin{proof}

	By \ref{lem:convex_body_without_interior_lattice_points_width}, we
	can assume \( P \subseteq \R \times [-1,2] \). Consider the line
	segments
	\[
		P_0\ :=\ P^{\circ} \cap (\R \times \{0\}), \qquad P_1\ :=\
		P^{\circ} \cap (\R
		\times \{1\}).
	\]
	We may assume \( P_0 \) and \( P_1 \) to be non-empty, otherwise we
	are in the situation of Proposition~\ref{polygons_in_m11}. Since \(
	P_0 \) and \( P_1 \) do not contain lattice points, we can apply a
	translation and shearing to achieve \( P_0 \subseteq [0,1] \times
	\{0\} \) and \( P_1 \subseteq [0,1] \times \{1\} \). By symmetry, it
	is enough to show that \( P \) does not contain a point \( v=(x,y) \)
	with \( x < 0 \) and \( y > x \). Suppose there is such a point. Then
	\( \convhull(P_0 \cup P_1 \cup \{v\}) \subseteq P \) has \( (0,0) \)
	as an interior lattice point, a contradiction.
\end{proof}

\begin{figure}[H]

	\caption{
		All rational polygons without interior lattice points can be
		realized in $A \cup \R\times [0,1]\cup B$.}
	\label{fig:classify_maximal_polygons_no_interior_lattice_points}
	\begin{tikzpicture}[x=1cm,y=1cm]

		\draw[step=1,black,line width=1pt,xshift=0cm,yshift=0cm,
			opacity=0.3] (-0.9,0.3) grid (5.9,-3.32);


		\fill[line width=2pt,color=black,fill=black,fill opacity=0.2] (1,0) -- (4,0) -- (3,-1) -- (6,-1) -- (6,-2) -- (3,-2) --(4,-3) --(1,-3) --(2,-2) --(-1,-2) --(-1,-1) --(2,-1) --cycle;
		\draw [line width=2pt,color=black] (1,0)-- (4,0);
		\draw [line width=2pt,color=black] (4,0)-- (3,-1);
		\draw [line width=2pt,color=black] (3,-1)-- (6,-1);
		\draw [line width=2pt,color=black] (6,-2)-- (3,-2);
		\draw [line width=2pt,color=black] (3,-2)-- (4,-3);
		\draw [line width=2pt,color=black] (4,-3)-- (1,-3);
		\draw [line width=2pt,color=black] (1,-3)-- (2,-2);
		\draw [line width=2pt,color=black] (2,-2)-- (-1,-2);
		\draw [line width=2pt,color=black] (-1,-1)-- (2,-1);
		\draw [line width=2pt,color=black] (2,-1)-- (1,0);
		\draw [line width=2pt,color=black,dotted] (2,-1)-- (3,-1);
		\draw [line width=2pt,color=black,dotted] (2,-2)-- (3,-2);

		\draw [fill=black] (1,0) circle (3pt);
		\draw [fill=black] (2,-1) circle (3pt) node[below]{\small $a_1$};
		\draw [fill=black] (3,-1) circle (3pt) node[below]{\small $a_2$};
		\draw [fill=black] (4,0) circle (3pt);
		\draw [fill=black] (1,-3) circle (3pt);
		\draw [fill=black] (2,-2) circle (3pt) node[below]{\small $b_1$};
		\draw [fill=black] (3,-2) circle (3pt) node[below]{\small $b_2$};
		\draw [fill=black] (4,-3) circle (3pt);

		\node at (2.5,-0.5) {$A$};
		\node at (2.5,-2.5) {$B$};

		\node at (6.3,0) {2};
		\node at (6.3,-1) {1};
		\node at (6.3,-2) {0};
		\node at (6.3,-3) {-1};

		\node at (0,-3.6) {-2};
		\node at (1,-3.6) {-1};
		\node at (2,-3.6) {0};
		\node at (3,-3.6) {1};
		\node at (4,-3.6) {2};
		\node at (5,-3.6) {3};

	\end{tikzpicture}

\end{figure}
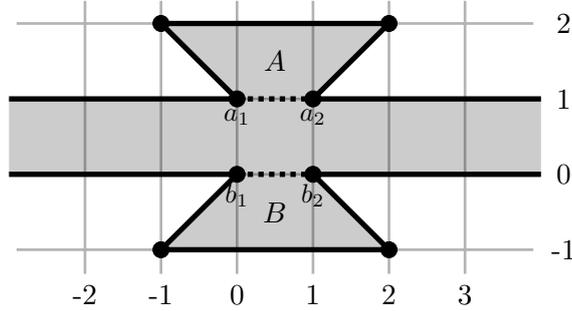

\begin{con}
	\label{cns:surrounding_polygon_no_interior_lattice_points}
	Consider a \( k \)-rational polygon \( P \subseteq A \cup \R \times
	[0,1] \cup B \) with \( P \cap A \neq \emptyset \)  and \( P \cap B
	\neq \emptyset \) where \( A \) and \( B \) are as in the last
	Proposition. Consider the points
	\[
		a_1\ :=\ (0,1), \quad a_2\ :=\ (1,1), \quad b_1\ :=\
		(0,0), \quad b_2\ :=\ (1,0).
	\]
	Then we have unique supporting lines \( L_{a_1}, L_{a_2}, L_{b_1},
	L_{b_2} \) of \( P \) with \( a_j \in L_{a_j}, b_j \in L_{b_j} \)
	such that \( L_{a_j} \) contains a point of \( P \cap A \) and \(
	L_{b_j} \) contains a point of \( P \cap B \). These define affine
	halfplanes \( H_{a_1},H_{a_2},H_{b_1},H_{b_2} \) with \( P \subseteq
	H_{a_j} \) and \( P \subseteq H_{b_j} \). We define the
	\emph{surrounding polygon} of \( P \) to be the \( k \)-rational
	polygon
	\[
		P_{\sur}\ :=\ \convhull\left(Q \cap \frac{1}{k}\Z^2\right), \qquad
		Q\ :=\ \R \times [-1,2] \cap H_{a_1} \cap H_{a_2} \cap H_{b_1}
		\cap H_{b_2}.
	\]
	Note that we have \( P \subseteq P_{\sur} \subseteq A \cup \R \times
	[0,1] \cup B \) and \( i(P) = i(P_{\sur}) = 0 \). In particular, if
	\( P \) is \( k \)-maximal, we have \( P = P_{\sur} \).
\end{con}

\begin{algo}
	\label{algo:classify_maximal_polygons_no_interior_lattice_points}
	Given \( k \in \Z_{\geq 1} \), we can classify all \( k \)-maximal
	polygons without interior lattice points as follows: To any choice
	of points \( v_1, v_2 \in A \) and \( w_1,w_2 \in B \), we
	associate the polygon \( P := \convhull\left(Q \cap
	\frac{1}{k}\Z^2\right) \), where
	\[
		Q := \R \times [-1,2] \cap H(a_1,v_1) \cap H(v_2,a_2) \cap H(b_1,w_1) \cap H(w_2,b_2).
	\]
	This defines a possible surrounding polygon. Going through all
	polygons that arise this way and filtering out those that are \( k
	\)-maximal (for instance, using
	Proposition~\ref{prp:k_maximal_criterion}) gives us all \( k
	\)-maximal polygons without interior lattice points, excluding those
	with \( P \cap A = \emptyset \) or \( P \cap B = \emptyset \). The
	latter ones can be realized in \( \R \times [-1,1] \) and hence can
	be obtained with Algorithm~\ref{algo:classify_maximal_polygons_m1p1}
	applied to \( i = 0 \).
\end{algo}

An implementation of
Algorithm~\ref{algo:classify_maximal_polygons_no_interior_lattice_points}
is available in \texttt{RationalPolygons.jl}
\cite{RationalPolygons_jl}. We ran the algorithm up
to \( k = 20 \) and obtained the following classification, see also
figures~\ref{fig:two_maximal_polygons_no_interior_lattice_points},
\ref{fig:three_maximal_polygons_no_interior_lattice_points} and~
\ref{fig:four_maximal_polygons_no_interior_lattice_points}.

\begin{class}[Data available at~\cite{springer_2024_13838991}]
	\label{class:maximal_polygons_with_no_interior_lattice_points}
	Below are the numbers of \( k \)-maximal polygons without interior
	lattice points, split into the two cases of
	Algorithm~\ref{algo:classify_maximal_polygons_no_interior_lattice_points}.
	\begin{center}
		{\scriptsize
			\begin{tabular}{c|ccccccccccc}
				\( k \)                & 1        & 2        & 3        & 4           & 5           & 6   & 7   & 8      & 9 & 10 \\
				\( \R \times [-1,1] \) & 1        & 4        & 12       & 24          & 54          & 85  & 164 & 244    &
				380                    & 517                                                                                      \\
				\( \R \times [-1,2] \) & 0        & 0        & 2        & 15          & 80          & 214 & 791 & 1\,652 &
				4\,101                 & 8\,368                                                                                   \\
				Total                  & 1        & 4        & 14       & 39          & 134         & 299 & 955
				                       & 1\,896   & 4\,481   & 8\,885                                                             \\\hline\hline
				\( k \)                & 11       & 12       & 13       & 14          & 15          & 16  & 17  & 18
				                       & 19       & 20                                                                            \\
				\( \R \times [-1,1] \) & 809      & 1\,021   & 1\,506   & 1\,878      & 2\,398      &
				2\,987                 & 4\,039   & 4\,743   & 6\,239   & 7\,263                                                  \\
				\( \R \times [-1,2] \) & 17\,757  & 29\,637  & 57\,937  & 91\,009     &
				161\,418               & 258\,728 & 409\,990 & 595\,051 & 930\,544    & 1\,304\,553                               \\
				Total                  & 18\,566  & 30\,658  & 59\,443  & 92\,887     & 163\,816    &
				261\,715               & 414\,029 & 599\,794 & 936\,783 & 1\,311\,816
			\end{tabular}
		}
	\end{center}
\end{class}

Using Algorithm~\ref{algo:subpolygons}, we computed all subpolygons
of the \( k \)-maximal polygons from
Classification~\ref{class:maximal_polygons_with_no_interior_lattice_points}
up to \( k = 6 \). This gives us all \( k \)-rational polygons
without interior lattice points that cannot be realized in \( \R
\times [0,1] \) (there are infinitely many in \( \R \times [0,1] \)),
see Theorem~\ref{thm:main_classification}. The data is available
at~\cite{springer_2024_13838991}.

\begin{figure}
	\caption{All four $2$-maximal rational polygons without interior
        integral points, see also~\cite[Section 2]{AKW17}. All of them
		can be realized in \( \R \times [-1,1] \).}
	\label{fig:two_maximal_polygons_no_interior_lattice_points}
	\begin{tikzpicture}[x=0.5cm,y=0.5cm]

		\draw[step=2,black,line width=1.0pt] (-0.5,-0.5) grid (15.5,5.5);
		\draw[help lines,step=1,black,line width=0.2pt] (-0.5,-0.5) grid (15.5,5.5);

		\fill[opacity=0.2]
		(5,1)--(2,4)--(8,4)--cycle;
		\draw[line width=2.0pt, color=black]
		(5,1)--(2,4)--(8,4)--cycle;
		\draw [fill=black] (5,1) circle (2.0pt);
		\draw [fill=black] (2,4) circle (2.0pt);
		\draw [fill=black] (8,4) circle (2.0pt);

		\fill[opacity=0.2]
		(9,4)--(11,0)--(7,0)--cycle;
		\draw[line width=2.0pt, color=black]
		(9,4)--(11,0)--(7,0)--cycle;
		\draw [fill=black] (9,4) circle (2.0pt);
		\draw [fill=black] (11,0) circle (2.0pt);
		\draw [fill=black] (7,0) circle (2.0pt);

		\fill[opacity=0.2]
		(4,0)--(0,4)--(0,0)--cycle;
		\draw[line width=2.0pt, color=black]
		(4,0)--(0,4)--(0,0)--cycle;
		\draw [fill=black] (4,0) circle (2.0pt);
		\draw [fill=black] (0,4) circle (2.0pt);
		\draw [fill=black] (0,0) circle (2.0pt);

		\fill[opacity=0.2]
		(15,3)--(13,5)--(11,3)--(13,1)--cycle;
		\draw[line width=2.0pt, color=black]
		(15,3)--(13,5)--(11,3)--(13,1)--cycle;
		\draw [fill=black] (15,3) circle (2.0pt);
		\draw [fill=black] (13,5) circle (2.0pt);
		\draw [fill=black] (11,3) circle (2.0pt);
		\draw [fill=black] (13,1) circle (2.0pt);

	\end{tikzpicture}
\end{figure}
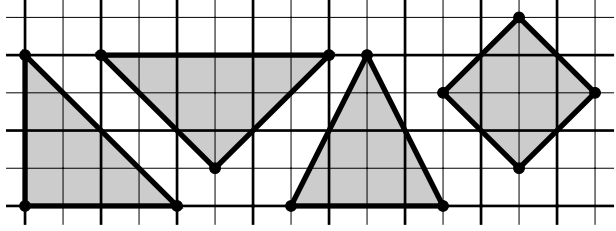

\begin{figure}
	\caption{All fourteen $3$-maximal rational polygons without
		interior integral points. The
		two rightmost polygons in the lower row are the only ones that cannot be
		realized in \( \R \times [-1,1] \).}
	\label{fig:three_maximal_polygons_no_interior_lattice_points}
	\begin{tikzpicture}[x=0.25cm,y=0.25cm]

		\draw[step=3,black,line width=1.0pt] (-1.5,-8.5) grid (41.5,6.5);
		\draw[help lines,step=1,black,line width=0.2pt] (-1.5,-8.5) grid (41.5,6.5);

		\fill[opacity=0.2]
		(10,5)--(8,1)--(3,6)--(10,6)--cycle;
		\draw[line width=2.0pt, color=black]
		(10,5)--(8,1)--(3,6)--(10,6)--cycle;
		\draw [fill=black] (10,5) circle (2.0pt);
		\draw [fill=black] (8,1) circle (2.0pt);
		\draw [fill=black] (3,6) circle (2.0pt);
		\draw [fill=black] (10,6) circle (2.0pt);

		\fill[opacity=0.2]
		(40,1)--(38,5)--(37,6)--(35,0)--(40,0)--cycle;
		\draw[line width=2.0pt, color=black]
		(40,1)--(38,5)--(37,6)--(35,0)--(40,0)--cycle;
		\draw [fill=black] (40,1) circle (2.0pt);
		\draw [fill=black] (38,5) circle (2.0pt);
		\draw [fill=black] (37,6) circle (2.0pt);
		\draw [fill=black] (35,0) circle (2.0pt);
		\draw [fill=black] (40,0) circle (2.0pt);

		\fill[opacity=0.2]
		(6,0)--(0,6)--(0,0)--cycle;
		\draw[line width=2.0pt, color=black]
		(6,0)--(0,6)--(0,0)--cycle;
		\draw [fill=black] (6,0) circle (2.0pt);
		\draw [fill=black] (0,6) circle (2.0pt);
		\draw [fill=black] (0,0) circle (2.0pt);

		\fill[opacity=0.2]
		(13,6)--(17,0)--(11,0)--cycle;
		\draw[line width=2.0pt, color=black]
		(13,6)--(17,0)--(11,0)--cycle;
		\draw [fill=black] (13,6) circle (2.0pt);
		\draw [fill=black] (17,0) circle (2.0pt);
		\draw [fill=black] (11,0) circle (2.0pt);

		\fill[opacity=0.2]
		(18,4)--(30,0)--(18,0)--cycle;
		\draw[line width=2.0pt, color=black]
		(18,4)--(30,0)--(18,0)--cycle;
		\draw [fill=black] (18,4) circle (2.0pt);
		\draw [fill=black] (30,0) circle (2.0pt);
		\draw [fill=black] (18,0) circle (2.0pt);

		\fill[opacity=0.2]
		(11,-5)--(13,-1)--(17,-5)--(13,-7)--cycle;
		\draw[line width=2.0pt, color=black]
		(11,-5)--(13,-1)--(17,-5)--(13,-7)--cycle;
		\draw [fill=black] (11,-5) circle (2.0pt);
		\draw [fill=black] (13,-1) circle (2.0pt);
		\draw [fill=black] (17,-5) circle (2.0pt);
		\draw [fill=black] (13,-7) circle (2.0pt);

		\fill[opacity=0.2]
		(-1,-5)--(1,-1)--(2,-1)--(4,-5)--(4,-6)--(-1,-6)--cycle;
		\draw[line width=2.0pt, color=black]
		(-1,-5)--(1,-1)--(2,-1)--(4,-5)--(4,-6)--(-1,-6)--cycle;
		\draw [fill=black] (-1,-5) circle (2.0pt);
		\draw [fill=black] (1,-1) circle (2.0pt);
		\draw [fill=black] (2,-1) circle (2.0pt);
		\draw [fill=black] (4,-5) circle (2.0pt);
		\draw [fill=black] (4,-6) circle (2.0pt);
		\draw [fill=black] (-1,-6) circle (2.0pt);

		\fill[opacity=0.2]
		(5,-5)--(7,-7)--(8,-7)--(10,-5)--(10,-4)--(8,-2)--(7,-2)--(5,-4)--cycle;
		\draw[line width=2.0pt, color=black]
		(5,-5)--(7,-7)--(8,-7)--(10,-5)--(10,-4)--(8,-2)--(7,-2)--(5,-4)--cycle;
		\draw [fill=black] (5,-5) circle (2.0pt);
		\draw [fill=black] (7,-7) circle (2.0pt);
		\draw [fill=black] (8,-7) circle (2.0pt);
		\draw [fill=black] (10,-5) circle (2.0pt);
		\draw [fill=black] (10,-4) circle (2.0pt);
		\draw [fill=black] (8,-2) circle (2.0pt);
		\draw [fill=black] (7,-2) circle (2.0pt);
		\draw [fill=black] (5,-4) circle (2.0pt);

		\fill[opacity=0.2]
		(27,2)--(27,6)--(18,6)--(26,2)--cycle;
		\draw[line width=2.0pt, color=black]
		(27,2)--(27,6)--(18,6)--(26,2)--cycle;
		\draw [fill=black] (27,2) circle (2.0pt);
		\draw [fill=black] (27,6) circle (2.0pt);
		\draw [fill=black] (18,6) circle (2.0pt);
		\draw [fill=black] (26,2) circle (2.0pt);

		\fill[opacity=0.2]
		(26,-7)--(28,-5)--(26,-1)--(23,-4)--(23,-5)--(25,-7)--cycle;
		\draw[line width=2.0pt, color=black]
		(26,-7)--(28,-5)--(26,-1)--(23,-4)--(23,-5)--(25,-7)--cycle;
		\draw [fill=black] (26,-7) circle (2.0pt);
		\draw [fill=black] (28,-5) circle (2.0pt);
		\draw [fill=black] (26,-1) circle (2.0pt);
		\draw [fill=black] (23,-4) circle (2.0pt);
		\draw [fill=black] (23,-5) circle (2.0pt);
		\draw [fill=black] (25,-7) circle (2.0pt);

		\fill[opacity=0.2]
		(19,-8)--(22,-5)--(20,-1)--(17,-4)--cycle;
		\draw[line width=2.0pt, color=black]
		(19,-8)--(22,-5)--(20,-1)--(17,-4)--cycle;
		\draw [fill=black] (19,-8) circle (2.0pt);
		\draw [fill=black] (22,-5) circle (2.0pt);
		\draw [fill=black] (20,-1) circle (2.0pt);
		\draw [fill=black] (17,-4) circle (2.0pt);

		\fill[opacity=0.2]
		(32,0)--(34,6)--(29,6)--(31,0)--cycle;
		\draw[line width=2.0pt, color=black]
		(32,0)--(34,6)--(29,6)--(31,0)--cycle;
		\draw [fill=black] (32,0) circle (2.0pt);
		\draw [fill=black] (34,6) circle (2.0pt);
		\draw [fill=black] (29,6) circle (2.0pt);
		\draw [fill=black] (31,0) circle (2.0pt);

		\fill[opacity=0.2]
		(31,-7)--(29,-5)--(31,-1)--(32,-1)--(34,-5)--(32,-7)--cycle;
		\draw[line width=2.0pt, color=black]
		(31,-7)--(29,-5)--(31,-1)--(32,-1)--(34,-5)--(32,-7)--cycle;
		\draw [fill=black] (31,-7) circle (2.0pt);
		\draw [fill=black] (29,-5) circle (2.0pt);
		\draw [fill=black] (31,-1) circle (2.0pt);
		\draw [fill=black] (32,-1) circle (2.0pt);
		\draw [fill=black] (34,-5) circle (2.0pt);
		\draw [fill=black] (32,-7) circle (2.0pt);

		\fill[opacity=0.2]
		(38,-7)--(35,-1)--(41,-4)--cycle;
		\draw[line width=2.0pt, color=black]
		(38,-7)--(35,-1)--(41,-4)--cycle;
		\draw [fill=black] (38,-7) circle (2.0pt);
		\draw [fill=black] (35,-1) circle (2.0pt);
		\draw [fill=black] (41,-4) circle (2.0pt);

	\end{tikzpicture}
\end{figure}
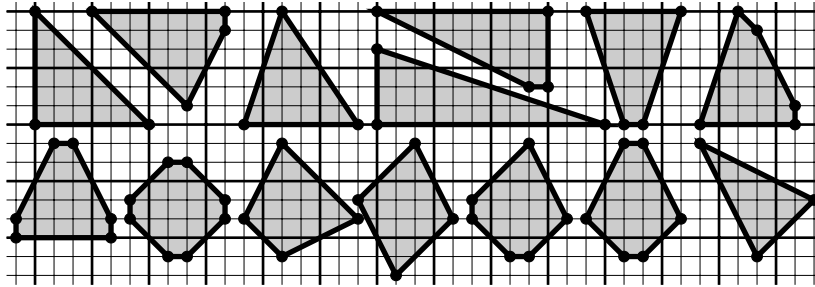

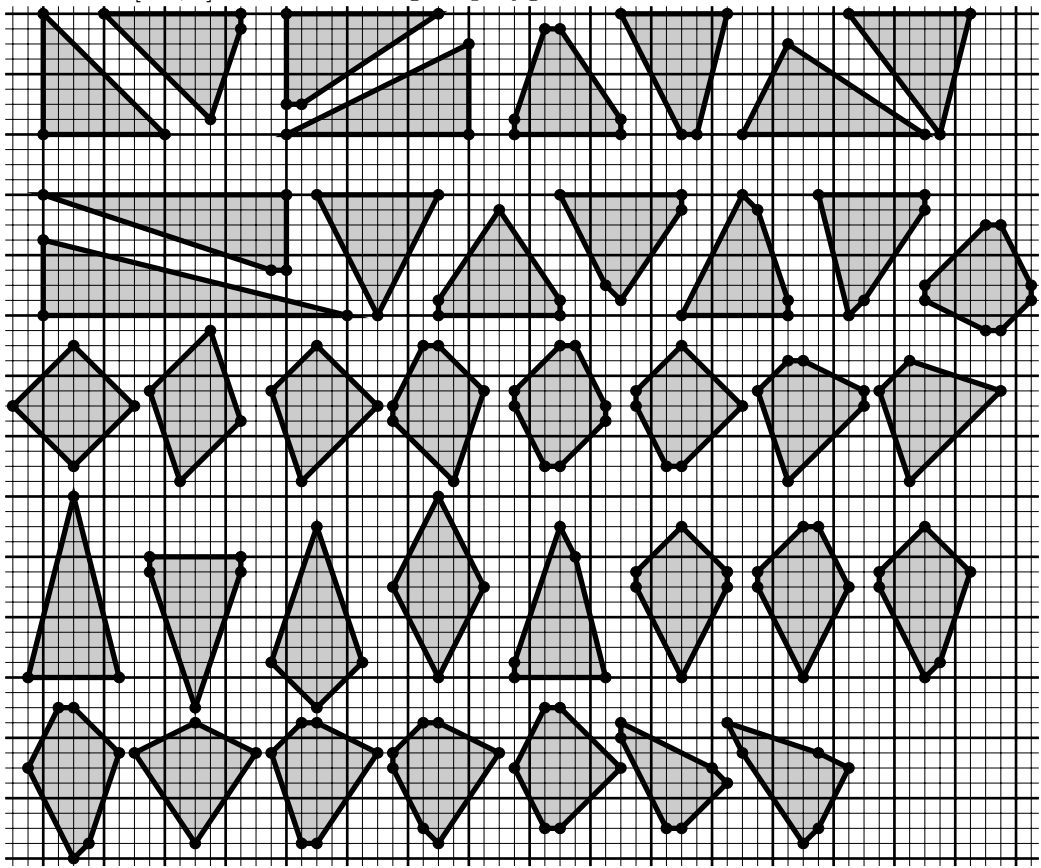
\begin{figure}
	\caption{All thirtynine $4$-maximal rational polygons without
		interior lattice points. The 24 polygons in the upper three rows
		can be realized in \( \R \times [-1,1] \). The remaining 15
		polygons cannot.}
	\label{fig:four_maximal_polygons_no_interior_lattice_points}
	\begin{tikzpicture}[x=0.2cm,y=0.2cm]

		\draw[step=4,black,line width=1.0pt] (-18.5,-64.5) grid (49.5,-7.5);
		\draw[help lines,step=1,black,line width=0.2pt] (-18.5,-64.5) grid (49.5,-7.5);

		\fill[opacity=0.2]
		(27,-16)--(29,-8)--(22,-8)--(26,-16)--cycle;
		\draw[line width=2.0pt, color=black]
		(27,-16)--(29,-8)--(22,-8)--(26,-16)--cycle;
		\draw [fill=black] (27,-16) circle (2.0pt);
		\draw [fill=black] (29,-8) circle (2.0pt);
		\draw [fill=black] (22,-8) circle (2.0pt);
		\draw [fill=black] (26,-16) circle (2.0pt);

		\fill[opacity=0.2]
		(-10,-34)--(-14,-30)--(-18,-34)--(-14,-38)--cycle;
		\draw[line width=2.0pt, color=black]
		(-10,-34)--(-14,-30)--(-18,-34)--(-14,-38)--cycle;
		\draw [fill=black] (-10,-34) circle (2.0pt);
		\draw [fill=black] (-14,-30) circle (2.0pt);
		\draw [fill=black] (-18,-34) circle (2.0pt);
		\draw [fill=black] (-14,-38) circle (2.0pt);

		\fill[opacity=0.2]
		(15,-15)--(17,-9)--(18,-9)--(22,-15)--(22,-16)--(15,-16)--cycle;
		\draw[line width=2.0pt, color=black]
		(15,-15)--(17,-9)--(18,-9)--(22,-15)--(22,-16)--(15,-16)--cycle;
		\draw [fill=black] (15,-15) circle (2.0pt);
		\draw [fill=black] (17,-9) circle (2.0pt);
		\draw [fill=black] (18,-9) circle (2.0pt);
		\draw [fill=black] (22,-15) circle (2.0pt);
		\draw [fill=black] (22,-16) circle (2.0pt);
		\draw [fill=black] (15,-16) circle (2.0pt);

		\fill[opacity=0.2]
		(-7,-39)--(-3,-35)--(-5,-29)--(-9,-33)--cycle;
		\draw[line width=2.0pt, color=black]
		(-7,-39)--(-3,-35)--(-5,-29)--(-9,-33)--cycle;
		\draw [fill=black] (-7,-39) circle (2.0pt);
		\draw [fill=black] (-3,-35) circle (2.0pt);
		\draw [fill=black] (-5,-29) circle (2.0pt);
		\draw [fill=black] (-9,-33) circle (2.0pt);

		\fill[opacity=0.2]
		(12,-10)--(0,-16)--(12,-16)--cycle;
		\draw[line width=2.0pt, color=black]
		(12,-10)--(0,-16)--(12,-16)--cycle;
		\draw [fill=black] (12,-10) circle (2.0pt);
		\draw [fill=black] (0,-16) circle (2.0pt);
		\draw [fill=black] (12,-16) circle (2.0pt);

		\fill[opacity=0.2]
		(38,-27)--(42,-21)--(42,-20)--(35,-20)--(37,-28)--cycle;
		\draw[line width=2.0pt, color=black]
		(38,-27)--(42,-21)--(42,-20)--(35,-20)--(37,-28)--cycle;
		\draw [fill=black] (38,-27) circle (2.0pt);
		\draw [fill=black] (42,-21) circle (2.0pt);
		\draw [fill=black] (42,-20) circle (2.0pt);
		\draw [fill=black] (35,-20) circle (2.0pt);
		\draw [fill=black] (37,-28) circle (2.0pt);

		\fill[opacity=0.2]
		(33,-27)--(31,-21)--(30,-20)--(26,-28)--(33,-28)--cycle;
		\draw[line width=2.0pt, color=black]
		(33,-27)--(31,-21)--(30,-20)--(26,-28)--(33,-28)--cycle;
		\draw [fill=black] (33,-27) circle (2.0pt);
		\draw [fill=black] (31,-21) circle (2.0pt);
		\draw [fill=black] (30,-20) circle (2.0pt);
		\draw [fill=black] (26,-28) circle (2.0pt);
		\draw [fill=black] (33,-28) circle (2.0pt);

		\fill[opacity=0.2]
		(39,-33)--(41,-39)--(47,-33)--(41,-31)--cycle;
		\draw[line width=2.0pt, color=black]
		(39,-33)--(41,-39)--(47,-33)--(41,-31)--cycle;
		\draw [fill=black] (39,-33) circle (2.0pt);
		\draw [fill=black] (41,-39) circle (2.0pt);
		\draw [fill=black] (47,-33) circle (2.0pt);
		\draw [fill=black] (41,-31) circle (2.0pt);

		\fill[opacity=0.2]
		(6,-28)--(10,-20)--(2,-20)--cycle;
		\draw[line width=2.0pt, color=black]
		(6,-28)--(10,-20)--(2,-20)--cycle;
		\draw [fill=black] (6,-28) circle (2.0pt);
		\draw [fill=black] (10,-20) circle (2.0pt);
		\draw [fill=black] (2,-20) circle (2.0pt);

		\fill[opacity=0.2]
		(49,-26)--(47,-22)--(46,-22)--(42,-26)--(42,-27)--(46,-29)--(47,-29)--(49,-27)--cycle;
		\draw[line width=2.0pt, color=black]
		(49,-26)--(47,-22)--(46,-22)--(42,-26)--(42,-27)--(46,-29)--(47,-29)--(49,-27)--cycle;
		\draw [fill=black] (49,-26) circle (2.0pt);
		\draw [fill=black] (47,-22) circle (2.0pt);
		\draw [fill=black] (46,-22) circle (2.0pt);
		\draw [fill=black] (42,-26) circle (2.0pt);
		\draw [fill=black] (42,-27) circle (2.0pt);
		\draw [fill=black] (46,-29) circle (2.0pt);
		\draw [fill=black] (47,-29) circle (2.0pt);
		\draw [fill=black] (49,-27) circle (2.0pt);

		\fill[opacity=0.2]
		(0,-25)--(0,-20)--(-16,-20)--(-1,-25)--cycle;
		\draw[line width=2.0pt, color=black]
		(0,-25)--(0,-20)--(-16,-20)--(-1,-25)--cycle;
		\draw [fill=black] (0,-25) circle (2.0pt);
		\draw [fill=black] (0,-20) circle (2.0pt);
		\draw [fill=black] (-16,-20) circle (2.0pt);
		\draw [fill=black] (-1,-25) circle (2.0pt);

		\fill[opacity=0.2]
		(-8,-16)--(-16,-8)--(-16,-16)--cycle;
		\draw[line width=2.0pt, color=black]
		(-8,-16)--(-16,-8)--(-16,-16)--cycle;
		\draw [fill=black] (-8,-16) circle (2.0pt);
		\draw [fill=black] (-16,-8) circle (2.0pt);
		\draw [fill=black] (-16,-16) circle (2.0pt);

		\fill[opacity=0.2]
		(15,-34)--(17,-38)--(18,-38)--(21,-35)--(21,-34)--(19,-30)--(18,-30)--(15,-33)--cycle;
		\draw[line width=2.0pt, color=black]
		(15,-34)--(17,-38)--(18,-38)--(21,-35)--(21,-34)--(19,-30)--(18,-30)--(15,-33)--cycle;
		\draw [fill=black] (15,-34) circle (2.0pt);
		\draw [fill=black] (17,-38) circle (2.0pt);
		\draw [fill=black] (18,-38) circle (2.0pt);
		\draw [fill=black] (21,-35) circle (2.0pt);
		\draw [fill=black] (21,-34) circle (2.0pt);
		\draw [fill=black] (19,-30) circle (2.0pt);
		\draw [fill=black] (18,-30) circle (2.0pt);
		\draw [fill=black] (15,-33) circle (2.0pt);

		\fill[opacity=0.2]
		(10,-28)--(18,-28)--(18,-27)--(14,-21)--(10,-27)--cycle;
		\draw[line width=2.0pt, color=black]
		(10,-28)--(18,-28)--(18,-27)--(14,-21)--(10,-27)--cycle;
		\draw [fill=black] (10,-28) circle (2.0pt);
		\draw [fill=black] (18,-28) circle (2.0pt);
		\draw [fill=black] (18,-27) circle (2.0pt);
		\draw [fill=black] (14,-21) circle (2.0pt);
		\draw [fill=black] (10,-27) circle (2.0pt);

		\fill[opacity=0.2]
		(1,-39)--(6,-34)--(2,-30)--(-1,-33)--cycle;
		\draw[line width=2.0pt, color=black]
		(1,-39)--(6,-34)--(2,-30)--(-1,-33)--cycle;
		\draw [fill=black] (1,-39) circle (2.0pt);
		\draw [fill=black] (6,-34) circle (2.0pt);
		\draw [fill=black] (2,-30) circle (2.0pt);
		\draw [fill=black] (-1,-33) circle (2.0pt);

		\fill[opacity=0.2]
		(-3,-9)--(-5,-15)--(-12,-8)--(-3,-8)--cycle;
		\draw[line width=2.0pt, color=black]
		(-3,-9)--(-5,-15)--(-12,-8)--(-3,-8)--cycle;
		\draw [fill=black] (-3,-9) circle (2.0pt);
		\draw [fill=black] (-5,-15) circle (2.0pt);
		\draw [fill=black] (-12,-8) circle (2.0pt);
		\draw [fill=black] (-3,-8) circle (2.0pt);

		\fill[opacity=0.2]
		(23,-34)--(25,-38)--(26,-38)--(30,-34)--(26,-30)--(23,-33)--cycle;
		\draw[line width=2.0pt, color=black]
		(23,-34)--(25,-38)--(26,-38)--(30,-34)--(26,-30)--(23,-33)--cycle;
		\draw [fill=black] (23,-34) circle (2.0pt);
		\draw [fill=black] (25,-38) circle (2.0pt);
		\draw [fill=black] (26,-38) circle (2.0pt);
		\draw [fill=black] (30,-34) circle (2.0pt);
		\draw [fill=black] (26,-30) circle (2.0pt);
		\draw [fill=black] (23,-33) circle (2.0pt);

		\fill[opacity=0.2]
		(33,-10)--(42,-16)--(30,-16)--cycle;
		\draw[line width=2.0pt, color=black]
		(33,-10)--(42,-16)--(30,-16)--cycle;
		\draw [fill=black] (33,-10) circle (2.0pt);
		\draw [fill=black] (42,-16) circle (2.0pt);
		\draw [fill=black] (30,-16) circle (2.0pt);

		\fill[opacity=0.2]
		(-16,-23)--(4,-28)--(-16,-28)--cycle;
		\draw[line width=2.0pt, color=black]
		(-16,-23)--(4,-28)--(-16,-28)--cycle;
		\draw [fill=black] (-16,-23) circle (2.0pt);
		\draw [fill=black] (4,-28) circle (2.0pt);
		\draw [fill=black] (-16,-28) circle (2.0pt);

		\fill[opacity=0.2]
		(43,-16)--(37,-8)--(45,-8)--cycle;
		\draw[line width=2.0pt, color=black]
		(43,-16)--(37,-8)--(45,-8)--cycle;
		\draw [fill=black] (43,-16) circle (2.0pt);
		\draw [fill=black] (37,-8) circle (2.0pt);
		\draw [fill=black] (45,-8) circle (2.0pt);

		\fill[opacity=0.2]
		(7,-34)--(9,-30)--(10,-30)--(13,-33)--(11,-39)--(7,-35)--cycle;
		\draw[line width=2.0pt, color=black]
		(7,-34)--(9,-30)--(10,-30)--(13,-33)--(11,-39)--(7,-35)--cycle;
		\draw [fill=black] (7,-34) circle (2.0pt);
		\draw [fill=black] (9,-30) circle (2.0pt);
		\draw [fill=black] (10,-30) circle (2.0pt);
		\draw [fill=black] (13,-33) circle (2.0pt);
		\draw [fill=black] (11,-39) circle (2.0pt);
		\draw [fill=black] (7,-35) circle (2.0pt);

		\fill[opacity=0.2]
		(33,-31)--(31,-33)--(33,-39)--(38,-34)--(38,-33)--(34,-31)--cycle;
		\draw[line width=2.0pt, color=black]
		(33,-31)--(31,-33)--(33,-39)--(38,-34)--(38,-33)--(34,-31)--cycle;
		\draw [fill=black] (33,-31) circle (2.0pt);
		\draw [fill=black] (31,-33) circle (2.0pt);
		\draw [fill=black] (33,-39) circle (2.0pt);
		\draw [fill=black] (38,-34) circle (2.0pt);
		\draw [fill=black] (38,-33) circle (2.0pt);
		\draw [fill=black] (34,-31) circle (2.0pt);

		\fill[opacity=0.2]
		(0,-14)--(0,-8)--(10,-8)--(1,-14)--cycle;
		\draw[line width=2.0pt, color=black]
		(0,-14)--(0,-8)--(10,-8)--(1,-14)--cycle;
		\draw [fill=black] (0,-14) circle (2.0pt);
		\draw [fill=black] (0,-8) circle (2.0pt);
		\draw [fill=black] (10,-8) circle (2.0pt);
		\draw [fill=black] (1,-14) circle (2.0pt);

		\fill[opacity=0.2]
		(21,-26)--(18,-20)--(26,-20)--(26,-21)--(22,-27)--cycle;
		\draw[line width=2.0pt, color=black]
		(21,-26)--(18,-20)--(26,-20)--(26,-21)--(22,-27)--cycle;
		\draw [fill=black] (21,-26) circle (2.0pt);
		\draw [fill=black] (18,-20) circle (2.0pt);
		\draw [fill=black] (26,-20) circle (2.0pt);
		\draw [fill=black] (26,-21) circle (2.0pt);
		\draw [fill=black] (22,-27) circle (2.0pt);

		\fill[opacity=0.2]
		(23,-46)--(26,-52)--(29,-46)--(29,-45)--(26,-42)--(23,-45)--cycle;
		\draw[line width=2.0pt, color=black]
		(23,-46)--(26,-52)--(29,-46)--(29,-45)--(26,-42)--(23,-45)--cycle;
		\draw [fill=black] (23,-46) circle (2.0pt);
		\draw [fill=black] (26,-52) circle (2.0pt);
		\draw [fill=black] (29,-46) circle (2.0pt);
		\draw [fill=black] (29,-45) circle (2.0pt);
		\draw [fill=black] (26,-42) circle (2.0pt);
		\draw [fill=black] (23,-45) circle (2.0pt);

		\fill[opacity=0.2]
		(-9,-45)--(-6,-54)--(-3,-45)--(-3,-44)--(-9,-44)--cycle;
		\draw[line width=2.0pt, color=black]
		(-9,-45)--(-6,-54)--(-3,-45)--(-3,-44)--(-9,-44)--cycle;
		\draw [fill=black] (-9,-45) circle (2.0pt);
		\draw [fill=black] (-6,-54) circle (2.0pt);
		\draw [fill=black] (-3,-45) circle (2.0pt);
		\draw [fill=black] (-3,-44) circle (2.0pt);
		\draw [fill=black] (-9,-44) circle (2.0pt);

		\fill[opacity=0.2]
		(18,-54)--(22,-58)--(18,-62)--(17,-62)--(15,-58)--(17,-54)--cycle;
		\draw[line width=2.0pt, color=black]
		(18,-54)--(22,-58)--(18,-62)--(17,-62)--(15,-58)--(17,-54)--cycle;
		\draw [fill=black] (18,-54) circle (2.0pt);
		\draw [fill=black] (22,-58) circle (2.0pt);
		\draw [fill=black] (18,-62) circle (2.0pt);
		\draw [fill=black] (17,-62) circle (2.0pt);
		\draw [fill=black] (15,-58) circle (2.0pt);
		\draw [fill=black] (17,-54) circle (2.0pt);

		\fill[opacity=0.2]
		(-14,-54)--(-11,-57)--(-13,-63)--(-14,-64)--(-17,-58)--(-15,-54)--cycle;
		\draw[line width=2.0pt, color=black]
		(-14,-54)--(-11,-57)--(-13,-63)--(-14,-64)--(-17,-58)--(-15,-54)--cycle;
		\draw [fill=black] (-14,-54) circle (2.0pt);
		\draw [fill=black] (-11,-57) circle (2.0pt);
		\draw [fill=black] (-13,-63) circle (2.0pt);
		\draw [fill=black] (-14,-64) circle (2.0pt);
		\draw [fill=black] (-17,-58) circle (2.0pt);
		\draw [fill=black] (-15,-54) circle (2.0pt);

		\fill[opacity=0.2]
		(13,-46)--(10,-40)--(7,-46)--(10,-52)--cycle;
		\draw[line width=2.0pt, color=black]
		(13,-46)--(10,-40)--(7,-46)--(10,-52)--cycle;
		\draw [fill=black] (13,-46) circle (2.0pt);
		\draw [fill=black] (10,-40) circle (2.0pt);
		\draw [fill=black] (7,-46) circle (2.0pt);
		\draw [fill=black] (10,-52) circle (2.0pt);

		\fill[opacity=0.2]
		(31,-46)--(34,-52)--(37,-46)--(35,-42)--(34,-42)--(31,-45)--cycle;
		\draw[line width=2.0pt, color=black]
		(31,-46)--(34,-52)--(37,-46)--(35,-42)--(34,-42)--(31,-45)--cycle;
		\draw [fill=black] (31,-46) circle (2.0pt);
		\draw [fill=black] (34,-52) circle (2.0pt);
		\draw [fill=black] (37,-46) circle (2.0pt);
		\draw [fill=black] (35,-42) circle (2.0pt);
		\draw [fill=black] (34,-42) circle (2.0pt);
		\draw [fill=black] (31,-45) circle (2.0pt);

		\fill[opacity=0.2]
		(15,-51)--(18,-42)--(19,-44)--(21,-52)--(15,-52)--cycle;
		\draw[line width=2.0pt, color=black]
		(15,-51)--(18,-42)--(19,-44)--(21,-52)--(15,-52)--cycle;
		\draw [fill=black] (15,-51) circle (2.0pt);
		\draw [fill=black] (18,-42) circle (2.0pt);
		\draw [fill=black] (19,-44) circle (2.0pt);
		\draw [fill=black] (21,-52) circle (2.0pt);
		\draw [fill=black] (15,-52) circle (2.0pt);

		\fill[opacity=0.2]
		(10,-55)--(14,-57)--(10,-63)--(9,-62)--(7,-58)--(7,-57)--(9,-55)--cycle;
		\draw[line width=2.0pt, color=black]
		(10,-55)--(14,-57)--(10,-63)--(9,-62)--(7,-58)--(7,-57)--(9,-55)--cycle;
		\draw [fill=black] (10,-55) circle (2.0pt);
		\draw [fill=black] (14,-57) circle (2.0pt);
		\draw [fill=black] (10,-63) circle (2.0pt);
		\draw [fill=black] (9,-62) circle (2.0pt);
		\draw [fill=black] (7,-58) circle (2.0pt);
		\draw [fill=black] (7,-57) circle (2.0pt);
		\draw [fill=black] (9,-55) circle (2.0pt);

		\fill[opacity=0.2]
		(39,-45)--(42,-42)--(45,-45)--(43,-51)--(42,-52)--(39,-46)--cycle;
		\draw[line width=2.0pt, color=black]
		(39,-45)--(42,-42)--(45,-45)--(43,-51)--(42,-52)--(39,-46)--cycle;
		\draw [fill=black] (39,-45) circle (2.0pt);
		\draw [fill=black] (42,-42) circle (2.0pt);
		\draw [fill=black] (45,-45) circle (2.0pt);
		\draw [fill=black] (43,-51) circle (2.0pt);
		\draw [fill=black] (42,-52) circle (2.0pt);
		\draw [fill=black] (39,-46) circle (2.0pt);

		\fill[opacity=0.2]
		(-14,-40)--(-11,-52)--(-17,-52)--cycle;
		\draw[line width=2.0pt, color=black]
		(-14,-40)--(-11,-52)--(-17,-52)--cycle;
		\draw [fill=black] (-14,-40) circle (2.0pt);
		\draw [fill=black] (-11,-52) circle (2.0pt);
		\draw [fill=black] (-17,-52) circle (2.0pt);

		\fill[opacity=0.2]
		(2,-55)--(6,-57)--(2,-63)--(1,-63)--(-1,-57)--(1,-55)--cycle;
		\draw[line width=2.0pt, color=black]
		(2,-55)--(6,-57)--(2,-63)--(1,-63)--(-1,-57)--(1,-55)--cycle;
		\draw [fill=black] (2,-55) circle (2.0pt);
		\draw [fill=black] (6,-57) circle (2.0pt);
		\draw [fill=black] (2,-63) circle (2.0pt);
		\draw [fill=black] (1,-63) circle (2.0pt);
		\draw [fill=black] (-1,-57) circle (2.0pt);
		\draw [fill=black] (1,-55) circle (2.0pt);

		\fill[opacity=0.2]
		(-6,-55)--(-10,-57)--(-6,-63)--(-2,-57)--cycle;
		\draw[line width=2.0pt, color=black]
		(-6,-55)--(-10,-57)--(-6,-63)--(-2,-57)--cycle;
		\draw [fill=black] (-6,-55) circle (2.0pt);
		\draw [fill=black] (-10,-57) circle (2.0pt);
		\draw [fill=black] (-6,-63) circle (2.0pt);
		\draw [fill=black] (-2,-57) circle (2.0pt);

		\fill[opacity=0.2]
		(2,-54)--(5,-51)--(2,-42)--(-1,-51)--cycle;
		\draw[line width=2.0pt, color=black]
		(2,-54)--(5,-51)--(2,-42)--(-1,-51)--cycle;
		\draw [fill=black] (2,-54) circle (2.0pt);
		\draw [fill=black] (5,-51) circle (2.0pt);
		\draw [fill=black] (2,-42) circle (2.0pt);
		\draw [fill=black] (-1,-51) circle (2.0pt);

		\fill[opacity=0.2]
		(22,-56)--(25,-62)--(26,-62)--(29,-59)--(28,-58)--(22,-55)--cycle;
		\draw[line width=2.0pt, color=black]
		(22,-56)--(25,-62)--(26,-62)--(29,-59)--(28,-58)--(22,-55)--cycle;
		\draw [fill=black] (22,-56) circle (2.0pt);
		\draw [fill=black] (25,-62) circle (2.0pt);
		\draw [fill=black] (26,-62) circle (2.0pt);
		\draw [fill=black] (29,-59) circle (2.0pt);
		\draw [fill=black] (28,-58) circle (2.0pt);
		\draw [fill=black] (22,-55) circle (2.0pt);

		\fill[opacity=0.2]
		(35,-62)--(37,-58)--(35,-57)--(29,-55)--(30,-57)--(34,-63)--cycle;
		\draw[line width=2.0pt, color=black]
		(35,-62)--(37,-58)--(35,-57)--(29,-55)--(30,-57)--(34,-63)--cycle;
		\draw [fill=black] (35,-62) circle (2.0pt);
		\draw [fill=black] (37,-58) circle (2.0pt);
		\draw [fill=black] (35,-57) circle (2.0pt);
		\draw [fill=black] (29,-55) circle (2.0pt);
		\draw [fill=black] (30,-57) circle (2.0pt);
		\draw [fill=black] (34,-63) circle (2.0pt);

	\end{tikzpicture}
\end{figure}

\section{Rational polygons with exactly one interior lattice point
  and LDP polygons}
\label{sec:rational_polygons_with_exactly_one_interior_lattice_points}

We give a classification algorithm for \( k \)-maximal polygons with no
interior lattice points and obtain explicit results up to \( k = 10
\), see
Classification~\ref{class:maximal_polygons_one_interior_lattice_points}.
Moreover, we apply this to LDP polygons and extend the classification
of \( k \)-hollow LDP polygons from~\cite{HHS23} to \( k = 6 \), see
Classification~\ref{class:ldp_polygons}.

\begin{lemma}\label{convex_body_one_interior_lattice_points_width}
	Every rational polygon \( P \) with exactly one interior lattice point
	can be realized in \( \R \times [-2,2] \).
\end{lemma}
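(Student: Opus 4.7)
The plan is to mirror the proof of Lemma~\ref{lem:convex_body_without_interior_lattice_points_width}, weakening the hypothesis from zero to one interior lattice point and correspondingly weakening the width bound from three to four. The goal is to establish that $\lw(P) \leq 4$, after which a unimodular rotation bringing a lattice width direction to $(0,1)$ together with an integer translation realizes $P$ inside $\R \times [-2,2]$.

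First, by an affine unimodular transformation I may assume that $(1,0)$ is a lattice width direction of $P$, so that $P \subseteq [x_l,x_r]\times\R$ with $x_r-x_l = \lw(P)$. I then invoke \cite[Corollary 2.13]{Boh23} to obtain lower bounds on $|P^{\circ}\cap(\{h\}\times\Z)|$ for each integer $h$ with $x_l \leq h \leq x_r$. These bounds vanish near the endpoints of $[x_l,x_r]$ and grow as $h$ moves toward the centre, forcing interior lattice points on any sufficiently central integer vertical line.

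Since $i(P)=1$, the sum of these lower bounds over $h \in (x_l,x_r)\cap\Z$ must not exceed one. In the predecessor lemma the corresponding sum bounded the number of interior integer vertical lines by two, yielding $x_r-x_l \leq 3$. The analogous accounting here, now with a ``budget'' of one interior lattice point instead of zero, should permit at most three interior integer vertical lines and therefore give $x_r-x_l \leq 4$. A swap of the axes via $\GL(2,\Z)$ followed by an integer translation then places $P$ inside $\R \times [-2,2]$.

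The main obstacle I foresee is the precise bookkeeping when applying Corollary 2.13 in the borderline regime $x_r-x_l = 4$: one must verify that the extra interior integer vertical line admitted by the single interior lattice point really can contribute exactly one such point without spilling over into further contributions on neighbouring lines. Once the width bound is in hand, the realization inside $\R \times [-2,2]$ is a routine translation argument exploiting the fact that the extremal horizontal coordinates of $P$ lie in $\tfrac{1}{k}\Z$.
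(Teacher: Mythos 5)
Your proposal is correct and follows essentially the same route as the paper: reduce to a lattice width direction, apply \cite[Corollary 2.13]{Boh23} to bound the number of interior lattice points on integral lines orthogonal to that direction, conclude that at most three such lines can meet the interior, and hence that the width is at most $4$. The paper's own proof is exactly this argument (stated with the roles of the two coordinates swapped) and is no more detailed about the bookkeeping than you are, so no further justification is needed beyond what you give.
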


\begin{proof}
	After an affine unimodular transformation, we may assume that \(
	(1,0) \) is a lattice width direction vector of \( P \) and \( P
	\subseteq [x_l,x_r] \times \R \) for \( x_l,x_r \in \R \). Then
	\cite[Corollary 2.13]{Boh23} gives lower bounds for the numbers \(
	|P^{\circ} \cap (\{h\} \times \Z)| \), where \( x_l \leq h
	\leq x_r \) and \( h \in \Z \). Since \( P \) only has one
	interior lattice point, these imply that there are at most
	three interior integral vertical lines of \( P \), hence \( x_r - x_l
	\leq 4 \) and the claim follows.
\end{proof}

\begin{prop}
	Every rational polygon with exactly one interior
	lattice point can be realized in \( \R\times [0,1] \cup A \cup B \cup
	C_q\) for some \( q \in \{1,2,3,4,5\} \), where
	\begin{align*}
		A\ :=\    & \convhull\left((-1,1), (0,1), (0,2),
		(-2,2)\right)\ \setminus\ (\R \times \{1\}),     \\
		B\ :=\    &
		\convhull\left((-1,0),(-2,-1),(3,-1),(1,0)\right)\
		\setminus\ (\R \times \{0\},                     \\
		C_1\ :=\  &
		\convhull\left((-3,-2),(-2,-2),(-1,-1),(-2,-1)\right)\
		\setminus\ (\R \times \{-1\}),                   \\
		C_2\ :=\  &
		\convhull\left((-2,-2),(0,-2),(0,-1),(-1,-1)\right)\
		\setminus\ (\R \times \{-1\}),                   \\
		C_3\ :=\  &
		\convhull\left((0,-2),(2,-2),(1,-1),(0,-1)\right)\
		\setminus\ (\R \times \{-1\}),                   \\
		C_4\ :=\  &
		\convhull\left((2,-2),(4,-2),(5,-2),(1,0)\right) \
		\setminus\ (\R \times \{-1\}),                   \\
		C_5\ :=\  &
		\convhull\left((-1,0),(-3,-2),(5,-2),(1,0)\right)\
		\setminus\ (\R \times \{-1\}).
	\end{align*}
\end{prop}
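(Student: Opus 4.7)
The argument should parallel the proofs of Proposition~\ref{polygons_in_m11} and the analogous proposition of the previous section. The plan is: invoke Lemma~\ref{convex_body_one_interior_lattice_points_width} to confine $P$ to a horizontal strip, fix the unique interior lattice point at the origin, use an integer shearing to normalize one slice, and then analyze the polygon row by row.

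First I would apply Lemma~\ref{convex_body_one_interior_lattice_points_width} to realize $P$ in $\R \times [-2,2]$ and translate so that the unique interior lattice point is $(0,0)$. After a reflection on $\R \times \{0\}$ we may orient the polygon so that, say, it extends at least as far downward as upward. Introduce the slices $L_j := P \cap (\R \times \{j\})$ for $j \in \{-1,0,1\}$. Since $(0,0)$ is the only interior lattice point, the relative interiors of $L_1$ and $L_{-1}$ contain no lattice points, so each of them has length at most $1$. A horizontal integer shearing $(x,y) \mapsto (x+my,y)$ fixes $\R \times \{0\}$ pointwise and preserves $\Z^2$, so I can use it to normalize the position of $L_1$ inside a unit horizontal interval at height~$1$.

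Now I would show that $P \cap (\R \times [0,2])$ fits in $\R \times [0,1] \cup A$ by the argument used in the proof of the analogous proposition in Section~\ref{sec:rational_polygons_without_interior_lattice_points}: convexity together with the fact that $L_1$ contains no lattice points in its interior and $(0,0) \in L_0^{\circ}$ forces the part above $y=1$ into the specific trapezoid $A$. Next, show that $P \cap (\R \times [-1,0])$ fits in $B$; again this follows from convexity applied to $L_0$ (which contains $(0,0)$) and to $L_{-1}$ (which has length at most $1$), combined with the normalization already chosen for the upper half. The half-open trapezoidal shape of $B$, with top $[-1,1] \times \{0\}$ and bottom $[-2,3] \times \{-1\}$, is exactly large enough to accommodate all admissible configurations.

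The main obstacle is the final step, where one must enumerate the five cases $C_1, \dots, C_5$ for the bottom extension $P \cap (\R \times [-2,-1])$. Since the shearing freedom was used to normalize $L_1$, the segment $L_{-1}$ is now constrained to one of only finitely many positions along $\R \times \{-1\}$; its length at most~$1$ and absence of interior lattice points mean $L_{-1}$ sits in a unit-length open interval, or coincides with a unit interval $[n,n+1] \times \{-1\}$ with boundary lattice endpoints. I expect the three "thin" trapezoids $C_1, C_2, C_3$ to cover the three essential horizontal positions of $L_{-1}$ relative to $(0,0)$ that are compatible with the upper normalization, while $C_4$ handles a slanted triangular configuration where the polygon tapers to a vertex in the middle strip, and $C_5$ handles the widest case where the extension below $y=-1$ broadens to meet the full range of $B$. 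The technical work is in checking that convexity, combined with the constraints coming from $L_0$ containing $(0,0)$ and from the normalized position of $L_1$, rules out any configuration outside these five, and conversely that each $C_q$ is actually realized by some polygon.
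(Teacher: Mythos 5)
Your outline follows the paper's strategy: confine \( P \) to \( \R\times[-2,2] \) via Lemma~\ref{convex_body_one_interior_lattice_points_width}, translate the interior lattice point to the origin, fall back to Proposition~\ref{polygons_in_m11} when \( P \) stays in \( \R\times[-1,1] \), and otherwise control the polygon row by row using convexity against the slices at heights \( 1,0,-1 \), with the five regions \( C_q \) coming from the finitely many admissible positions of the lattice-point-free slice at height \( -1 \). You are in fact more explicit than the paper about the integral shearing that pins down the horizontal position.

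There are, however, two related flaws that leave a genuine gap. First, the slices must be taken in the interior, \( P^{\circ}\cap(\R\times\{j\}) \), not \( P\cap(\R\times\{j\}) \): if \( P \) has an edge on the line \( y=\pm 1 \), your \( L_{\pm1} \) can be a long edge containing boundary lattice points in its relative interior, so the claim that it has length at most \( 1 \) is false and it cannot be sheared into a unit interval. Second, and more seriously, your reflection convention is backwards. The paper reflects so that \( P^{\circ}\cap(\R\times\{1\})\neq\emptyset \) whenever \( P \) leaves \( \R\times[-1,1] \); this nonempty, lattice-point-free open segment is the anchor that determines the shearing, and all three containments --- of \( P\cap(\R\times(1,2]) \) in \( A \), of \( P\cap(\R\times[-1,0)) \) in \( B \) (whose right edge reaching \( x=3 \) at height \( -1 \) comes precisely from extrapolating through the normalized height-\( 1 \) slice), and of the part below \( y=-1 \) in a single \( C_q \) --- are deduced from segments joining points of \( P \) to this slice and to the origin. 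Orienting \( P \) so that it ``extends at least as far downward as upward'' leaves unhandled exactly the case where \( P^{\circ} \) meets \( \R\times\{-1\} \) but not \( \R\times\{1\} \): there your \( L_1 \) is empty or a long edge, the shearing is undetermined, and the lower part of \( P \) can sit far outside \( B\cup C_q \). Concretely, for the centered and reflected threefold standard triangle \( \convhull\left((-1,1),(2,1),(-1,-2)\right) \) your convention keeps the downward orientation, and \( L_1=[-1,2]\times\{1\} \) has length \( 3 \) and contains \( (0,1) \) and \( (1,1) \) in its relative interior, so your normalization step cannot be carried out. The fix is simply to adopt the paper's reflection; with that in place, your plan for the five cases is sound, though the exact shapes of the \( C_q \) come out of the convexity computation against the normalized height-\( 1 \) slice and the origin rather than from the qualitative descriptions you guess for \( C_4 \) and \( C_5 \).
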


\begin{proof}
	By \ref{convex_body_one_interior_lattice_points_width}, we can
	assume \( P \subseteq \R \times [-2,2] \) and after translation,
	we can assume the interior lattice point of \( P \) to be the
	origin. Consider the line segments
	\[
		P_1\ :=\ P^{\circ} \cap (\R \times \{1\}), \quad P_0\ :=\
		P^{\circ} \cap (\R \times \{0\}), \quad P_{-1}\ :=\
		P^{\circ} \cap (\R \times \{-1\}). \quad
	\]
	Clearly, we have \( P_0 \neq \emptyset \) and convexity forces \( P_0
	\subseteq [-1,1] \times \{0\} \). If \( P_1 = P_{-1} = \emptyset \),
	we are in the situation of Proposition~\ref{polygons_in_m11}, hence
	we are done after suitable translation and reflexion. Thus, after a
	reflexion, we can assume \( P_1 \neq \emptyset \). If \( P \) were to
	contain any point outside of \( \R\times [0,1] \cup A \cup B \cup
	\bigcup_q C_q\), convexity would force \( P \) to contain an
	additional lattice point, which is impossible. Moreover, if there
	were points \( v \in P^{\circ} \cap C_q \) and \( w \in P^{\circ}
	\cap C_r \) for \( q \neq r \), we would have an interior lattice
	point in the triangle \( \convhull(v,w,(0,0)) \subseteq P \), which
	is impossible. Hence \( P^{\circ} \cap C_q \neq \emptyset \) for at
	most one \( q \in \{1,2,3,4,5\} \) and we arrive at the claim.
\end{proof}

\begin{figure}[H]
	\caption{
		All rational polygons with exactly one interior lattice point
		can be realized in \( \R \times
		[0,1] \cup A \cup B \cup C_q \) for some \( q \in \{1,2,3,4,5\} \).}

	\label{fig:maximal_polygons_one_interior_lattice_point}
	\begin{tikzpicture}[x=1cm,y=1cm]

		\draw[step=1,black,line width=1pt,xshift=0cm,yshift=0cm,
			opacity=0.3] (-3.9,2.3) grid (5.9,-2.3);

		\fill[line width=2pt,color=black,fill=black,fill
			opacity=0.2] (-2,2) -- (0,2) -- (0,1) -- (5.9,1) --
		(5.9,0) -- (1,0) -- (5,-2) -- (-3,-2) -- (-1,0) --
		(-3.9,0) -- (-3.9,1) -- (-1,1) -- cycle;

		\draw [line width=2pt,color=black] (-3.9,1) -- (-1,1) --
		(-2,2) -- (0,2) -- (0,1) -- (5.9,1);
		\draw [line width=2pt,color=black] (-3.9,0) -- (-1,0) --
		(-3,-2) -- (5,-2) -- (1,0) -- (5.9,0);
		\draw [line width=2pt,color=black] (-1,-1) -- (-2,-2);
		\draw [line width=2pt,color=black] (0,-1) -- (0,-2);
		\draw [line width=2pt,color=black] (1,-1) -- (2,-2);
		\draw [line width=2pt,color=black] (2,-1) -- (3,-2);
		\draw [line width=2pt,color=black,dotted] (-1,1) -- (0,1);
		\draw [line width=2pt,color=black,dotted] (-1,0) -- (1,0);
		\draw [line width=2pt,color=black,dotted] (-2,-1) --
		(3,-1);

		\draw [fill=black] (-1,1) circle (3pt) node[below]{$a_1$};
		\draw [fill=black] (-2,2) circle (3pt);
		\draw [fill=black] (0,2) circle (3pt);
		\draw [fill=black] (0,1) circle (3pt) node[below]{$a_2$};
		\draw [fill=black] (-1,0) circle (3pt) node[below]{$b_1$};
		\draw [fill=black] (0,0) circle (3pt);
		\draw [fill=black] (1,0) circle (3pt) node[below]{$b_2$};
		\draw [fill=black] (-2,-1) circle (3pt);
		\draw [fill=black] (-1,-1) circle (3pt);
		\draw [fill=black] (0,-1) circle (3pt);
		\draw [fill=black] (1,-1) circle (3pt);
		\draw [fill=black] (2,-1) circle (3pt);
		\draw [fill=black] (3,-1) circle (3pt);
		\draw [fill=black] (-3,-2) circle (3pt);
		\draw [fill=black] (5,-2) circle (3pt);

		\node at (6.2,2) {2};
		\node at (6.2,1) {1};
		\node at (6.2,0) {0};
		\node at (6.2,-1) {-1};
		\node at (6.2,-2) {-2};

		\node at (-3,-2.5) {-3};
		\node at (-2,-2.5) {-2};
		\node at (-1,-2.5) {-1};
		\node at (0,-2.5) {0};
		\node at (1,-2.5) {1};
		\node at (2,-2.5) {2};
		\node at (3,-2.5) {3};
		\node at (4,-2.5) {4};
		\node at (5,-2.5) {5};

		\node at (-0.5,1.5) {$A$};
		\node at (0,-0.5) {$B$};
		\node at (-2,-1.5) {$C_1$};
		\node at (-0.7,-1.5) {$C_2$};
		\node at (0.7,-1.5) {$C_3$};
		\node at (2,-1.5) {$C_4$};
		\node at (3.3,-1.5) {$C_5$};

	\end{tikzpicture}

\end{figure}
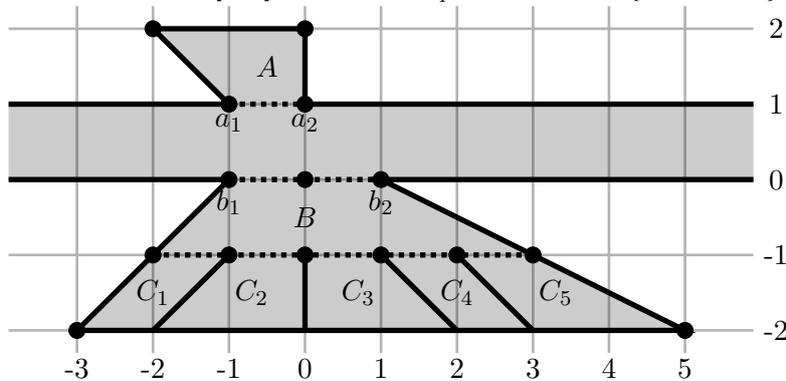

\begin{con}
	\label{cns:surrounding_polygon_one_interior_lattice_point}
	Consider a \( k \)-rational polygon \( P \subseteq \R \times
	[0,1] \cup A \cup B \cup C_q \) with \( P \cap A \neq
	\emptyset \), where \( q \in \{1,2,3,4,5\} \). Consider
	the points
	\[
		a_1 := (-1,1),\quad a_2 := (0,1), \quad b_1 := (-1,0),
		\quad b_2 := (1,0),
	\]
	\[
		c_1 := (q-3, -1), \quad c_2 := (q-2, -1).
	\]
	Then we have unique supporting lines \(
	L_{a_1},L_{a_2},L_{b_1},L_{b_2} \) of \( P \) with \( a_j \in
	L_{a_j}, b_j \in L_{b_j} \) such that \( L_{a_j} \) contains a point
	of \( P \cap A \) and \( L_{b_j} \) contains a point of \( P \cap B
	\). Moreover, if \( P \cap C_q \neq \emptyset \), we have unique
	supporting lines \( L_{c_1}, L_{c_2} \) defined analogously. All
	these supporting lines define affine halfplanes \( H_{a_j}, H_{b_j}
	\) and \( H_{c_j} \) that contain \( P \). We define the
	\emph{surrounding polygon} of \( P \) to be the \( k \)-rational
	polygon \(P_{\sur} := \convhull\left(Q \cap \frac{1}{k}\Z^2\right)\)
	where
	\[
		Q :=
		\begin{cases}
			\R \times [-1,2] \cap H_{a_1} \cap H_{a_2} \cap
			H_{b_1} \cap H_{b_2},                           & P \cap C_q = \emptyset, \\
			\R \times [-2,2] \cap H_{a_1} \cap H_{a_2} \cap
			H_{b_1} \cap H_{b_2} \cap H_{c_1} \cap H_{c_2}, & P
			\cap C_q \neq \emptyset.
		\end{cases}
	\]
	Note that we have \( P \subseteq P_{\sur} \subseteq \R \times [0,1]
	\cup A \cup B \cup C_q \) and \( i(P) = i(P_{\sur}) = 1 \). In
	paticular, if \( P \) is \( k \)-maximal, we have \( P = P_{\sur} \).

\end{con}

\begin{algo}
	\label{algo:classify_maximal_polygons_one_interior_lattice_points}
	Given \( k \in \Z_{\geq 1} \), we can classify all \( k \)-maximal
	polygons with exactly one interior lattice point as follows:
	Pick \( q \in \{1, \dots, 5\} \). To any choice of points
	\( v_1, v_2 \in A, w_1,w_2 \in B \) and \( u_1, u_2 \in C_q
	\), we associate the polygon \( P := \convhull\left(Q \cap
	\frac{1}{k}\Z^2\right) \), where
	\[
		Q := \R \times [-2,2] \cap H(a_1,v_1) \cap H(v_2,a_2) \cap
		H(b_1,w_1) \cap H(w_2,b_2) \cap H(c_1,u_1) \cap
		H(u_2,c_2).
	\]
	This defines a possible surrounding polygon. Going through all
	polygons that arise this way and filtering out those that are \( k
	\)-maximal (for instance, using
	Proposition~\ref{prp:k_maximal_criterion}) gives us all \( k
	\)-maximal polygons with one interior lattice point that can be
	realized in \( \R \times [-2,2] \), excluding those that can be
	realized in \( \R \times [-1,2] \). The latter ones are obtained in
	an analogous way but without the halfplanes given by \( c_1 \) and \(
	c_2 \). Finally, the polygons that can be realized in \( \R \times
	[-1,1] \) are obtained by
	Algorithm~\ref{algo:classify_maximal_polygons_m1p1} applied to \( i =
	1 \).
\end{algo}

An implementation of
Algorithm~\ref{algo:classify_maximal_polygons_one_interior_lattice_points}
is available in \texttt{RationalPolygons.jl}
\cite{RationalPolygons_jl}. We ran the algorithm up
to \( k = 10 \) and obtained the following classification, see also
figures~\ref{fig:two_maximal_polygons_one_interior_lattice_point}
and~\ref{fig:three_maximal_polygons_one_interior_lattice_point}.

\begin{class}[Data available at~\cite{springer_2024_13839216}]
	\label{class:maximal_polygons_one_interior_lattice_points}
	Below are the numbers of \( k \)-rational polygons with
	exactly one interior lattice point, split into the three
	cases according to
	Algorithm~\ref{algo:classify_maximal_polygons_one_interior_lattice_points}.
	\begin{center}
		\begin{tabular}{c|cccccccccc}
			\( k \)                & 1       & 2        & 3       & 4       & 5   & 6      & 7      & 8
			                       & 9       & 10                                                              \\\hline
			\(\R\times[-1,1]\)     & 2       & 9        & 26      & 57      & 132 & 199
			                       & 396     & 605      & 937     & 1\,260                                     \\
			\( \R \times [-1,2] \) & 1       & 1        & 12      & 83      & 470 &
			1\,390                 & 4\,964  & 11\,426  & 27\,801 & 56\,303                                    \\
			\( \R \times [-2,2] \) & 0       & 0        & 1       & 5       & 96  & 329    & 2\,874 & 8\,241 &
			32\,176                & 81\,950                                                                   \\
			Total                  & 3       & 10       & 39      & 145     & 698 & 1\,918 & 8\,234 &
			20\,272                & 60\,914 & 139\,513                                                        \\
		\end{tabular}
	\end{center}
\end{class}

Using Algorithm~\ref{algo:subpolygons}, we computed all subpolygons
of the \( k \)-maximal polygons from
Classification~\ref{class:maximal_polygons_one_interior_lattice_points}
up to \( k = 5 \). This gives us all \( k \)-rational polygons with
exactly one interior lattice point, see
Theorem~\ref{thm:main_classification}.

\begin{figure}
	\caption{All ten $2$-maximal rational polygons with exactly one
		interior integral point up to affine unimodular equivalence. The
		threefold standard triangle is the only one that cannot be realized in
		\( \R \times [-1,1] \).}
	\label{fig:two_maximal_polygons_one_interior_lattice_point}
	\begin{tikzpicture}[x=0.3cm,y=0.3cm]

		\draw[step=2,black,line width=1.0pt] (-18.5,-8.5) grid (14.5,2.5);
		\draw[help lines,step=1,black,line width=0.2pt] (-18.5,-8.5) grid (14.5,2.5);

		\fill[opacity=0.2]
		(-6,-2)--(-10,2)--(-14,-2)--cycle;
		\draw[line width=2.0pt, color=black]
		(-6,-2)--(-10,2)--(-14,-2)--cycle;
		\draw [fill=black] (-6,-2) circle (2.0pt);
		\draw [fill=black] (-10,2) circle (2.0pt);
		\draw [fill=black] (-14,-2) circle (2.0pt);

		\fill[opacity=0.2]
		(-18,-5)--(-14,-3)--(-14,-8)--(-18,-8)--cycle;
		\draw[line width=2.0pt, color=black]
		(-18,-5)--(-14,-3)--(-14,-8)--(-18,-8)--cycle;
		\draw [fill=black] (-18,-5) circle (2.0pt);
		\draw [fill=black] (-14,-3) circle (2.0pt);
		\draw [fill=black] (-14,-8) circle (2.0pt);
		\draw [fill=black] (-18,-8) circle (2.0pt);

		\fill[opacity=0.2]
		(-3,-4)--(-1,-8)--(-5,-8)--(-7,-4)--cycle;
		\draw[line width=2.0pt, color=black]
		(-3,-4)--(-1,-8)--(-5,-8)--(-7,-4)--cycle;
		\draw [fill=black] (-3,-4) circle (2.0pt);
		\draw [fill=black] (-1,-8) circle (2.0pt);
		\draw [fill=black] (-5,-8) circle (2.0pt);
		\draw [fill=black] (-7,-4) circle (2.0pt);

		\fill[opacity=0.2]
		(-1,2)--(-3,-2)--(-9,2)--cycle;
		\draw[line width=2.0pt, color=black]
		(-1,2)--(-3,-2)--(-9,2)--cycle;
		\draw [fill=black] (-1,2) circle (2.0pt);
		\draw [fill=black] (-3,-2) circle (2.0pt);
		\draw [fill=black] (-9,2) circle (2.0pt);

		\fill[opacity=0.2]
		(1,1)--(2,1)--(8,-2)--(-2,-2)--cycle;
		\draw[line width=2.0pt, color=black]
		(1,1)--(2,1)--(8,-2)--(-2,-2)--cycle;
		\draw [fill=black] (1,1) circle (2.0pt);
		\draw [fill=black] (2,1) circle (2.0pt);
		\draw [fill=black] (8,-2) circle (2.0pt);
		\draw [fill=black] (-2,-2) circle (2.0pt);

		\fill[opacity=0.2]
		(1,-8)--(3,-8)--(5,-4)--(-1,-4)--cycle;
		\draw[line width=2.0pt, color=black]
		(1,-8)--(3,-8)--(5,-4)--(-1,-4)--cycle;
		\draw [fill=black] (1,-8) circle (2.0pt);
		\draw [fill=black] (3,-8) circle (2.0pt);
		\draw [fill=black] (5,-4) circle (2.0pt);
		\draw [fill=black] (-1,-4) circle (2.0pt);

		\fill[opacity=0.2]
		(-8,-4)--(-12,-4)--(-12,-8)--(-8,-8)--cycle;
		\draw[line width=2.0pt, color=black]
		(-8,-4)--(-12,-4)--(-12,-8)--(-8,-8)--cycle;
		\draw [fill=black] (-8,-4) circle (2.0pt);
		\draw [fill=black] (-12,-4) circle (2.0pt);
		\draw [fill=black] (-12,-8) circle (2.0pt);
		\draw [fill=black] (-8,-8) circle (2.0pt);

		\fill[opacity=0.2]
		(8,-1)--(14,2)--(2,2)--cycle;
		\draw[line width=2.0pt, color=black]
		(8,-1)--(14,2)--(2,2)--cycle;
		\draw [fill=black] (8,-1) circle (2.0pt);
		\draw [fill=black] (14,2) circle (2.0pt);
		\draw [fill=black] (2,2) circle (2.0pt);

		\fill[opacity=0.2]
		(7,-4)--(8,-4)--(12,-8)--(5,-8)--cycle;
		\draw[line width=2.0pt, color=black]
		(7,-4)--(8,-4)--(12,-8)--(5,-8)--cycle;
		\draw [fill=black] (7,-4) circle (2.0pt);
		\draw [fill=black] (8,-4) circle (2.0pt);
		\draw [fill=black] (12,-8) circle (2.0pt);
		\draw [fill=black] (5,-8) circle (2.0pt);

		\fill[opacity=0.2]
		(-18,2)--(-12,2)--(-18,-4)--cycle;
		\draw[line width=2.0pt, color=black]
		(-18,2)--(-12,2)--(-18,-4)--cycle;
		\draw [fill=black] (-18,2) circle (2.0pt);
		\draw [fill=black] (-12,2) circle (2.0pt);
		\draw [fill=black] (-18,-4) circle (2.0pt);

	\end{tikzpicture}

\end{figure}
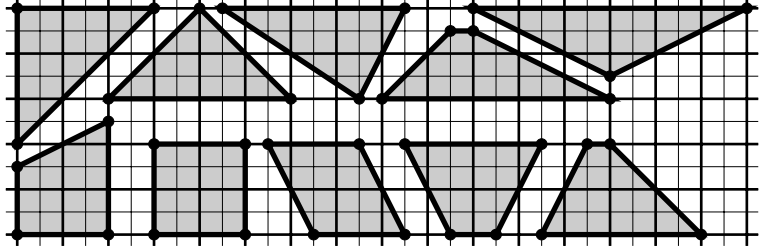

\begin{figure}
	\caption{All thirtynine \( 3 \)-maximal polygons with exactly one
		interior lattice points. The upper four rows are the 26 polygons
		in \( \R \times [-1,1] \). The lower two rows contain the 12 polygons in
		\( \R \times [-1,2] \) and the rightmost polygon in the last row is
		the only polygon in \( \R \times [-2,2] \).}
	\label{fig:three_maximal_polygons_one_interior_lattice_point}
	\begin{tikzpicture}[x=0.2cm,y=0.2cm]

		\draw[step=3,black,line width=1.0pt] (-15.5,-49.5) grid (45.5,6.5);
		\draw[help lines,step=1,black,line width=0.2pt] (-15.5,-49.5) grid (45.5,6.5);

		\fill[opacity=0.2]
		(-15,-22)--(0,-27)--(-15,-27)--cycle;
		\draw[line width=2.0pt, color=black]
		(-15,-22)--(0,-27)--(-15,-27)--cycle;
		\draw [fill=black] (-15,-22) circle (2.0pt);
		\draw [fill=black] (0,-27) circle (2.0pt);
		\draw [fill=black] (-15,-27) circle (2.0pt);

		\fill[opacity=0.2]
		(26,-21)--(28,-21)--(32,-27)--(22,-27)--cycle;
		\draw[line width=2.0pt, color=black]
		(26,-21)--(28,-21)--(32,-27)--(22,-27)--cycle;
		\draw [fill=black] (26,-21) circle (2.0pt);
		\draw [fill=black] (28,-21) circle (2.0pt);
		\draw [fill=black] (32,-27) circle (2.0pt);
		\draw [fill=black] (22,-27) circle (2.0pt);

		\fill[opacity=0.2]
		(11,-21)--(12,-22)--(18,-26)--(20,-26)--(22,-22)--(22,-21)--cycle;
		\draw[line width=2.0pt, color=black]
		(11,-21)--(12,-22)--(18,-26)--(20,-26)--(22,-22)--(22,-21)--cycle;
		\draw [fill=black] (11,-21) circle (2.0pt);
		\draw [fill=black] (12,-22) circle (2.0pt);
		\draw [fill=black] (18,-26) circle (2.0pt);
		\draw [fill=black] (20,-26) circle (2.0pt);
		\draw [fill=black] (22,-22) circle (2.0pt);
		\draw [fill=black] (22,-21) circle (2.0pt);

		\fill[opacity=0.2]
		(1,-25)--(3,-25)--(3,-21)--(-15,-21)--cycle;
		\draw[line width=2.0pt, color=black]
		(1,-25)--(3,-25)--(3,-21)--(-15,-21)--cycle;
		\draw [fill=black] (1,-25) circle (2.0pt);
		\draw [fill=black] (3,-25) circle (2.0pt);
		\draw [fill=black] (3,-21) circle (2.0pt);
		\draw [fill=black] (-15,-21) circle (2.0pt);

		\fill[opacity=0.2]
		(-5,6)--(-1,6)--(1,0)--(-7,0)--cycle;
		\draw[line width=2.0pt, color=black]
		(-5,6)--(-1,6)--(1,0)--(-7,0)--cycle;
		\draw [fill=black] (-5,6) circle (2.0pt);
		\draw [fill=black] (-1,6) circle (2.0pt);
		\draw [fill=black] (1,0) circle (2.0pt);
		\draw [fill=black] (-7,0) circle (2.0pt);

		\fill[opacity=0.2]
		(17,-18)--(18,-17)--(24,-13)--(26,-12)--(28,-18)--cycle;
		\draw[line width=2.0pt, color=black]
		(17,-18)--(18,-17)--(24,-13)--(26,-12)--(28,-18)--cycle;
		\draw [fill=black] (17,-18) circle (2.0pt);
		\draw [fill=black] (18,-17) circle (2.0pt);
		\draw [fill=black] (24,-13) circle (2.0pt);
		\draw [fill=black] (26,-12) circle (2.0pt);
		\draw [fill=black] (28,-18) circle (2.0pt);

		\fill[opacity=0.2]
		(7,6)--(8,5)--(10,1)--(10,0)--(2,0)--(4,6)--cycle;
		\draw[line width=2.0pt, color=black]
		(7,6)--(8,5)--(10,1)--(10,0)--(2,0)--(4,6)--cycle;
		\draw [fill=black] (7,6) circle (2.0pt);
		\draw [fill=black] (8,5) circle (2.0pt);
		\draw [fill=black] (10,1) circle (2.0pt);
		\draw [fill=black] (10,0) circle (2.0pt);
		\draw [fill=black] (2,0) circle (2.0pt);
		\draw [fill=black] (4,6) circle (2.0pt);

		\fill[opacity=0.2]
		(5,-12)--(6,-12)--(12,-18)--(1,-18)--cycle;
		\draw[line width=2.0pt, color=black]
		(5,-12)--(6,-12)--(12,-18)--(1,-18)--cycle;
		\draw [fill=black] (5,-12) circle (2.0pt);
		\draw [fill=black] (6,-12) circle (2.0pt);
		\draw [fill=black] (12,-18) circle (2.0pt);
		\draw [fill=black] (1,-18) circle (2.0pt);

		\fill[opacity=0.2]
		(-2,-3)--(-1,-4)--(1,-8)--(1,-9)--(-6,-9)--(-6,-3)--cycle;
		\draw[line width=2.0pt, color=black]
		(-2,-3)--(-1,-4)--(1,-8)--(1,-9)--(-6,-9)--(-6,-3)--cycle;
		\draw [fill=black] (-2,-3) circle (2.0pt);
		\draw [fill=black] (-1,-4) circle (2.0pt);
		\draw [fill=black] (1,-8) circle (2.0pt);
		\draw [fill=black] (1,-9) circle (2.0pt);
		\draw [fill=black] (-6,-9) circle (2.0pt);
		\draw [fill=black] (-6,-3) circle (2.0pt);

		\fill[opacity=0.2]
		(32,-21)--(34,-27)--(44,-21)--cycle;
		\draw[line width=2.0pt, color=black]
		(32,-21)--(34,-27)--(44,-21)--cycle;
		\draw [fill=black] (32,-21) circle (2.0pt);
		\draw [fill=black] (34,-27) circle (2.0pt);
		\draw [fill=black] (44,-21) circle (2.0pt);

		\fill[opacity=0.2]
		(16,6)--(20,0)--(14,0)--(10,6)--cycle;
		\draw[line width=2.0pt, color=black]
		(16,6)--(20,0)--(14,0)--(10,6)--cycle;
		\draw [fill=black] (16,6) circle (2.0pt);
		\draw [fill=black] (20,0) circle (2.0pt);
		\draw [fill=black] (14,0) circle (2.0pt);
		\draw [fill=black] (10,6) circle (2.0pt);

		\fill[opacity=0.2]
		(-9,6)--(-15,6)--(-15,0)--(-9,0)--cycle;
		\draw[line width=2.0pt, color=black]
		(-9,6)--(-15,6)--(-15,0)--(-9,0)--cycle;
		\draw [fill=black] (-9,6) circle (2.0pt);
		\draw [fill=black] (-15,6) circle (2.0pt);
		\draw [fill=black] (-15,0) circle (2.0pt);
		\draw [fill=black] (-9,0) circle (2.0pt);

		\fill[opacity=0.2]
		(25,6)--(26,5)--(28,1)--(28,0)--(23,0)--(22,1)--(20,5)--(20,6)--cycle;
		\draw[line width=2.0pt, color=black]
		(25,6)--(26,5)--(28,1)--(28,0)--(23,0)--(22,1)--(20,5)--(20,6)--cycle;
		\draw [fill=black] (25,6) circle (2.0pt);
		\draw [fill=black] (26,5) circle (2.0pt);
		\draw [fill=black] (28,1) circle (2.0pt);
		\draw [fill=black] (28,0) circle (2.0pt);
		\draw [fill=black] (23,0) circle (2.0pt);
		\draw [fill=black] (22,1) circle (2.0pt);
		\draw [fill=black] (20,5) circle (2.0pt);
		\draw [fill=black] (20,6) circle (2.0pt);

		\fill[opacity=0.2]
		(32,-3)--(31,-4)--(29,-8)--(29,-9)--(38,-9)--(34,-3)--cycle;
		\draw[line width=2.0pt, color=black]
		(32,-3)--(31,-4)--(29,-8)--(29,-9)--(38,-9)--(34,-3)--cycle;
		\draw [fill=black] (32,-3) circle (2.0pt);
		\draw [fill=black] (31,-4) circle (2.0pt);
		\draw [fill=black] (29,-8) circle (2.0pt);
		\draw [fill=black] (29,-9) circle (2.0pt);
		\draw [fill=black] (38,-9) circle (2.0pt);
		\draw [fill=black] (34,-3) circle (2.0pt);

		\fill[opacity=0.2]
		(-2,-17)--(-1,-17)--(1,-13)--(1,-12)--(-12,-12)--cycle;
		\draw[line width=2.0pt, color=black]
		(-2,-17)--(-1,-17)--(1,-13)--(1,-12)--(-12,-12)--cycle;
		\draw [fill=black] (-2,-17) circle (2.0pt);
		\draw [fill=black] (-1,-17) circle (2.0pt);
		\draw [fill=black] (1,-13) circle (2.0pt);
		\draw [fill=black] (1,-12) circle (2.0pt);
		\draw [fill=black] (-12,-12) circle (2.0pt);

		\fill[opacity=0.2]
		(22,-12)--(21,-13)--(15,-17)--(9,-13)--(8,-12)--cycle;
		\draw[line width=2.0pt, color=black]
		(22,-12)--(21,-13)--(15,-17)--(9,-13)--(8,-12)--cycle;
		\draw [fill=black] (22,-12) circle (2.0pt);
		\draw [fill=black] (21,-13) circle (2.0pt);
		\draw [fill=black] (15,-17) circle (2.0pt);
		\draw [fill=black] (9,-13) circle (2.0pt);
		\draw [fill=black] (8,-12) circle (2.0pt);

		\fill[opacity=0.2]
		(-3,-18)--(-15,-12)--(-15,-18)--cycle;
		\draw[line width=2.0pt, color=black]
		(-3,-18)--(-15,-12)--(-15,-18)--cycle;
		\draw [fill=black] (-3,-18) circle (2.0pt);
		\draw [fill=black] (-15,-12) circle (2.0pt);
		\draw [fill=black] (-15,-18) circle (2.0pt);

		\fill[opacity=0.2]
		(43,-18)--(43,-17)--(39,3)--(39,-18)--cycle;
		\draw[line width=2.0pt, color=black]
		(43,-18)--(43,-17)--(39,3)--(39,-18)--cycle;
		\draw [fill=black] (43,-18) circle (2.0pt);
		\draw [fill=black] (43,-17) circle (2.0pt);
		\draw [fill=black] (39,3) circle (2.0pt);
		\draw [fill=black] (39,-18) circle (2.0pt);

		\fill[opacity=0.2]
		(34,6)--(35,5)--(37,1)--(37,0)--(31,0)--(29,6)--cycle;
		\draw[line width=2.0pt, color=black]
		(34,6)--(35,5)--(37,1)--(37,0)--(31,0)--(29,6)--cycle;
		\draw [fill=black] (34,6) circle (2.0pt);
		\draw [fill=black] (35,5) circle (2.0pt);
		\draw [fill=black] (37,1) circle (2.0pt);
		\draw [fill=black] (37,0) circle (2.0pt);
		\draw [fill=black] (31,0) circle (2.0pt);
		\draw [fill=black] (29,6) circle (2.0pt);

		\fill[opacity=0.2]
		(7,-3)--(11,-9)--(3,-9)--(3,-3)--cycle;
		\draw[line width=2.0pt, color=black]
		(7,-3)--(11,-9)--(3,-9)--(3,-3)--cycle;
		\draw [fill=black] (7,-3) circle (2.0pt);
		\draw [fill=black] (11,-9) circle (2.0pt);
		\draw [fill=black] (3,-9) circle (2.0pt);
		\draw [fill=black] (3,-3) circle (2.0pt);

		\fill[opacity=0.2]
		(-8,-9)--(-10,-3)--(-15,-3)--(-15,-9)--cycle;
		\draw[line width=2.0pt, color=black]
		(-8,-9)--(-10,-3)--(-15,-3)--(-15,-9)--cycle;
		\draw [fill=black] (-8,-9) circle (2.0pt);
		\draw [fill=black] (-10,-3) circle (2.0pt);
		\draw [fill=black] (-15,-3) circle (2.0pt);
		\draw [fill=black] (-15,-9) circle (2.0pt);

		\fill[opacity=0.2]
		(41,6)--(45,6)--(45,-18)--cycle;
		\draw[line width=2.0pt, color=black]
		(41,6)--(45,6)--(45,-18)--cycle;
		\draw [fill=black] (41,6) circle (2.0pt);
		\draw [fill=black] (45,6) circle (2.0pt);
		\draw [fill=black] (45,-18) circle (2.0pt);

		\fill[opacity=0.2]
		(14,-9)--(10,-3)--(17,-3)--(19,-9)--cycle;
		\draw[line width=2.0pt, color=black]
		(14,-9)--(10,-3)--(17,-3)--(19,-9)--cycle;
		\draw [fill=black] (14,-9) circle (2.0pt);
		\draw [fill=black] (10,-3) circle (2.0pt);
		\draw [fill=black] (17,-3) circle (2.0pt);
		\draw [fill=black] (19,-9) circle (2.0pt);

		\fill[opacity=0.2]
		(25,-3)--(26,-4)--(28,-8)--(28,-9)--(20,-9)--(20,-8)--(22,-4)--(23,-3)--cycle;
		\draw[line width=2.0pt, color=black]
		(25,-3)--(26,-4)--(28,-8)--(28,-9)--(20,-9)--(20,-8)--(22,-4)--(23,-3)--cycle;
		\draw [fill=black] (25,-3) circle (2.0pt);
		\draw [fill=black] (26,-4) circle (2.0pt);
		\draw [fill=black] (28,-8) circle (2.0pt);
		\draw [fill=black] (28,-9) circle (2.0pt);
		\draw [fill=black] (20,-9) circle (2.0pt);
		\draw [fill=black] (20,-8) circle (2.0pt);
		\draw [fill=black] (22,-4) circle (2.0pt);
		\draw [fill=black] (23,-3) circle (2.0pt);

		\fill[opacity=0.2]
		(31,-18)--(34,-18)--(38,-12)--(29,-12)--cycle;
		\draw[line width=2.0pt, color=black]
		(31,-18)--(34,-18)--(38,-12)--(29,-12)--cycle;
		\draw [fill=black] (31,-18) circle (2.0pt);
		\draw [fill=black] (34,-18) circle (2.0pt);
		\draw [fill=black] (38,-12) circle (2.0pt);
		\draw [fill=black] (29,-12) circle (2.0pt);

		\fill[opacity=0.2]
		(8,-21)--(7,-21)--(5,-27)--(16,-27)--cycle;
		\draw[line width=2.0pt, color=black]
		(8,-21)--(7,-21)--(5,-27)--(16,-27)--cycle;
		\draw [fill=black] (8,-21) circle (2.0pt);
		\draw [fill=black] (7,-21) circle (2.0pt);
		\draw [fill=black] (5,-27) circle (2.0pt);
		\draw [fill=black] (16,-27) circle (2.0pt);

		\fill[opacity=0.2]
		(40,-37)--(41,-37)--(43,-35)--(40,-29)--(36,-33)--cycle;
		\draw[line width=2.0pt, color=black]
		(40,-37)--(41,-37)--(43,-35)--(40,-29)--(36,-33)--cycle;
		\draw [fill=black] (40,-37) circle (2.0pt);
		\draw [fill=black] (41,-37) circle (2.0pt);
		\draw [fill=black] (43,-35) circle (2.0pt);
		\draw [fill=black] (40,-29) circle (2.0pt);
		\draw [fill=black] (36,-33) circle (2.0pt);

		\fill[opacity=0.2]
		(20,-39)--(17,-30)--(14,-39)--cycle;
		\draw[line width=2.0pt, color=black]
		(20,-39)--(17,-30)--(14,-39)--cycle;
		\draw [fill=black] (20,-39) circle (2.0pt);
		\draw [fill=black] (17,-30) circle (2.0pt);
		\draw [fill=black] (14,-39) circle (2.0pt);

		\fill[opacity=0.2]
		(30,-32)--(31,-32)--(35,-34)--(37,-38)--(37,-39)--(30,-39)--cycle;
		\draw[line width=2.0pt, color=black]
		(30,-32)--(31,-32)--(35,-34)--(37,-38)--(37,-39)--(30,-39)--cycle;
		\draw [fill=black] (30,-32) circle (2.0pt);
		\draw [fill=black] (31,-32) circle (2.0pt);
		\draw [fill=black] (35,-34) circle (2.0pt);
		\draw [fill=black] (37,-38) circle (2.0pt);
		\draw [fill=black] (37,-39) circle (2.0pt);
		\draw [fill=black] (30,-39) circle (2.0pt);

		\fill[opacity=0.2]
		(6,-44)--(7,-43)--(11,-41)--(13,-43)--(11,-47)--(10,-48)--(6,-48)--cycle;
		\draw[line width=2.0pt, color=black]
		(6,-44)--(7,-43)--(11,-41)--(13,-43)--(11,-47)--(10,-48)--(6,-48)--cycle;
		\draw [fill=black] (6,-44) circle (2.0pt);
		\draw [fill=black] (7,-43) circle (2.0pt);
		\draw [fill=black] (11,-41) circle (2.0pt);
		\draw [fill=black] (13,-43) circle (2.0pt);
		\draw [fill=black] (11,-47) circle (2.0pt);
		\draw [fill=black] (10,-48) circle (2.0pt);
		\draw [fill=black] (6,-48) circle (2.0pt);

		\fill[opacity=0.2]
		(19,-48)--(20,-47)--(22,-43)--(20,-41)--(16,-43)--(14,-47)--(14,-48)--cycle;
		\draw[line width=2.0pt, color=black]
		(19,-48)--(20,-47)--(22,-43)--(20,-41)--(16,-43)--(14,-47)--(14,-48)--cycle;
		\draw [fill=black] (19,-48) circle (2.0pt);
		\draw [fill=black] (20,-47) circle (2.0pt);
		\draw [fill=black] (22,-43) circle (2.0pt);
		\draw [fill=black] (20,-41) circle (2.0pt);
		\draw [fill=black] (16,-43) circle (2.0pt);
		\draw [fill=black] (14,-47) circle (2.0pt);
		\draw [fill=black] (14,-48) circle (2.0pt);

		\fill[opacity=0.2]
		(-4,-44)--(4,-40)--(0,-48)--cycle;
		\draw[line width=2.0pt, color=black]
		(-4,-44)--(4,-40)--(0,-48)--cycle;
		\draw [fill=black] (-4,-44) circle (2.0pt);
		\draw [fill=black] (4,-40) circle (2.0pt);
		\draw [fill=black] (0,-48) circle (2.0pt);

		\fill[opacity=0.2]
		(0,-29)--(1,-30)--(7,-39)--(0,-39)--cycle;
		\draw[line width=2.0pt, color=black]
		(0,-29)--(1,-30)--(7,-39)--(0,-39)--cycle;
		\draw [fill=black] (0,-29) circle (2.0pt);
		\draw [fill=black] (1,-30) circle (2.0pt);
		\draw [fill=black] (7,-39) circle (2.0pt);
		\draw [fill=black] (0,-39) circle (2.0pt);

		\fill[opacity=0.2]
		(-3,-32)--(-5,-33)--(-13,-39)--(-3,-39)--cycle;
		\draw[line width=2.0pt, color=black]
		(-3,-32)--(-5,-33)--(-13,-39)--(-3,-39)--cycle;
		\draw [fill=black] (-3,-32) circle (2.0pt);
		\draw [fill=black] (-5,-33) circle (2.0pt);
		\draw [fill=black] (-13,-39) circle (2.0pt);
		\draw [fill=black] (-3,-39) circle (2.0pt);

		\fill[opacity=0.2]
		(-15,-30)--(-6,-30)--(-15,-39)--cycle;
		\draw[line width=2.0pt, color=black]
		(-15,-30)--(-6,-30)--(-15,-39)--cycle;
		\draw [fill=black] (-15,-30) circle (2.0pt);
		\draw [fill=black] (-6,-30) circle (2.0pt);
		\draw [fill=black] (-15,-39) circle (2.0pt);

		\fill[opacity=0.2]
		(13,-32)--(7,-29)--(5,-31)--(8,-37)--cycle;
		\draw[line width=2.0pt, color=black]
		(13,-32)--(7,-29)--(5,-31)--(8,-37)--cycle;
		\draw [fill=black] (13,-32) circle (2.0pt);
		\draw [fill=black] (7,-29) circle (2.0pt);
		\draw [fill=black] (5,-31) circle (2.0pt);
		\draw [fill=black] (8,-37) circle (2.0pt);

		\fill[opacity=0.2]
		(-8,-43)--(-5,-46)--(-11,-49)--(-14,-46)--cycle;
		\draw[line width=2.0pt, color=black]
		(-8,-43)--(-5,-46)--(-11,-49)--(-14,-46)--cycle;
		\draw [fill=black] (-8,-43) circle (2.0pt);
		\draw [fill=black] (-5,-46) circle (2.0pt);
		\draw [fill=black] (-11,-49) circle (2.0pt);
		\draw [fill=black] (-14,-46) circle (2.0pt);

		\fill[opacity=0.2]
		(25,-38)--(26,-36)--(28,-30)--(21,-30)--cycle;
		\draw[line width=2.0pt, color=black]
		(25,-38)--(26,-36)--(28,-30)--(21,-30)--cycle;
		\draw [fill=black] (25,-38) circle (2.0pt);
		\draw [fill=black] (26,-36) circle (2.0pt);
		\draw [fill=black] (28,-30) circle (2.0pt);
		\draw [fill=black] (21,-30) circle (2.0pt);

		\fill[opacity=0.2]
		(25,-43)--(29,-41)--(31,-43)--(29,-47)--(25,-49)--(23,-47)--cycle;
		\draw[line width=2.0pt, color=black]
		(25,-43)--(29,-41)--(31,-43)--(29,-47)--(25,-49)--(23,-47)--cycle;
		\draw [fill=black] (25,-43) circle (2.0pt);
		\draw [fill=black] (29,-41) circle (2.0pt);
		\draw [fill=black] (31,-43) circle (2.0pt);
		\draw [fill=black] (29,-47) circle (2.0pt);
		\draw [fill=black] (25,-49) circle (2.0pt);
		\draw [fill=black] (23,-47) circle (2.0pt);

	\end{tikzpicture}
\end{figure}
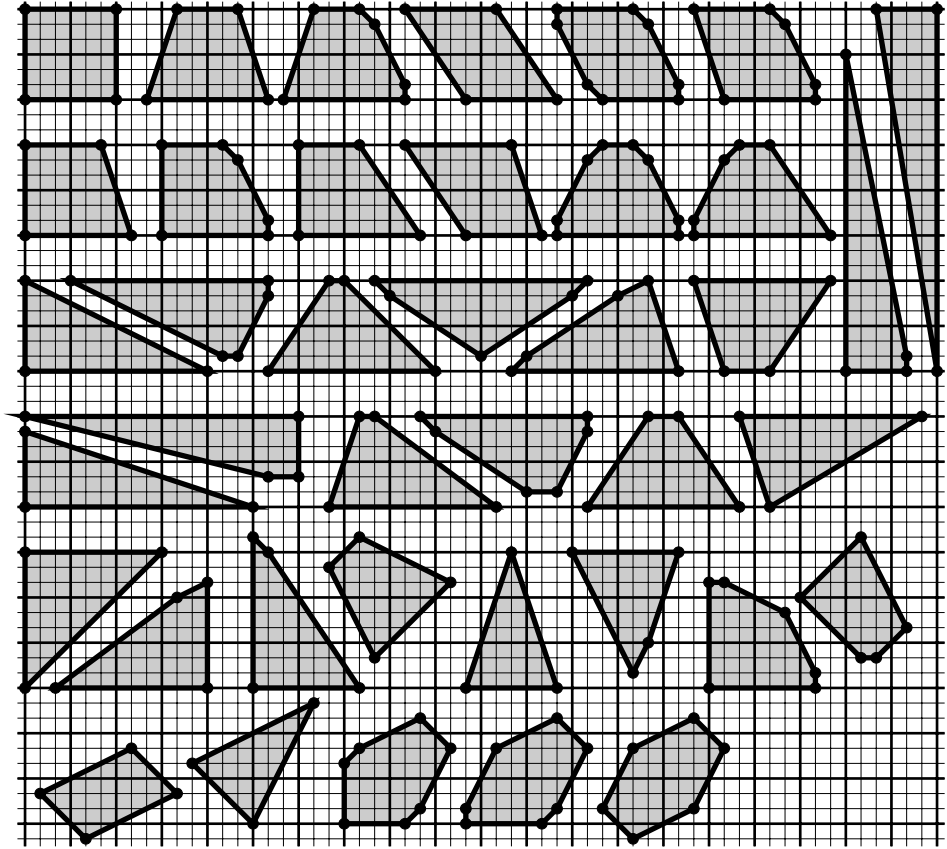

We turn to LDP polygons, which are of special interest to toric
geometry. Recall that a lattice polygon \( P \) is called \emph{LDP},
if \( 0 \in P^{\circ} \) and its vertices are primitive vectors of \(
\Z^2 \). By taking the fan generated by its faces, an LDP polygon \(
P \) defines a toric log del Pezzo surface \( X_P \) and this
assignment is a bijection on isomorphy classes. We refer
to~\cite{KKN10} for more information on LDP polygons as well as their
classification up to index~17.

In~\cite{HHS23}, the authors describe a classification algorithm for
\emph{almost \( k \)-hollow} LDP polygons, i.e. LDP polygons with \(
P \cap k\Z^2 = \{0\} \). These correspond to toric \( \frac{1}{k}
\)-log canonical del Pezzo surfaces, which means the discrepancies of
the exceptional divisors in the canonical resolution are all greater
or equal to \( \frac{1}{k} - 1 \). Dividing an almost \( k \)-hollow
LDP polygon by \( k \) gives a \( k \)-rational polygon having only
the origin as an interior lattice point. Hence we can also classify
\( k \)-hollow LDP polygons by taking our \( k \)-maximal polygons
with one interior lattice point and computing all subpolygons having
primitive vertices. We have done this up to \( k = 6 \), in
particular reproducing the classification of the almost \( 3
\)-hollow LDP polygons from~\cite{HHS23}:

\begin{class}
	\label{class:ldp_polygons}

	Below are the numbers \( \#_{\ldp}(k) \) of almost \( k \)-hollow LDP
	polygons, their maximimally attained number of vertices \(
	N_{\ldp}(k) \) and the number of vertex maximizers \( M_{\ldp}(k) \).
	Equivalently, \( \#_{\ldp}(k) \) is the number of toric \(
	\frac{1}{k} \)-log canonical del Pezzo surfaces and \( N_{\ldp}(k) -
	2 \) is their maximally attained Picard number. For comparison, we
	include the analogous numbers for all \( k \)-rational polygons with
	exactly one interior lattice points.
	\begin{center}
		{\small
			\begin{tabular}{c|cccccc}
				\( k \)                 & 1                & 2      & 3        & 4           & 5  & 6  \\\hline
				\( \#_{\mathrm{ldp}} \) & 16               & 505    & 48\,032  & 1\,741\,603 &
				154\,233\,886           & 2\,444\,400\,116                                             \\
				\( N_{\mathrm{ldp}} \)  & 6                & 8      & 12       & 12          & 16 & 18 \\
				\( M_{\mathrm{ldp}} \)  & 1                & 1      & 1        & 20          & 1  & 1  \\\hline
				\( \#_{\mathrm{all}} \) & 16               & 5\,145 & 924\,042 &
				101\,267\,212           & 8\,544\,548\,186 &                                           \\
				\( N_{\mathrm{all}} \)  & 6                & 9      & 12       & 14          & 18 &    \\
				\( M_{\mathrm{all}} \)  & 1                & 1      & 2        & 23          & 1  &    \\
			\end{tabular}
		}
	\end{center}

\end{class}

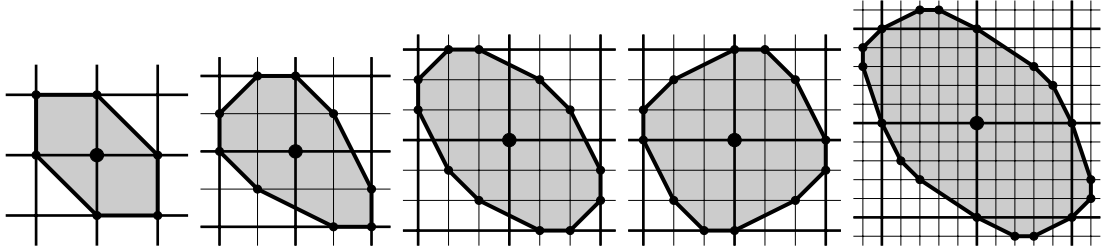
\begin{figure}
	\caption{The \( k \)-rational polygons with exactly one interior
		lattice point and maximal number of vertices for \( k \in
		\{1,2,3,5\} \).}
	\label{fig:maximal_number_of_vertices}
	\begin{tikzpicture}[x=0.8cm,y=0.8cm]

		\draw[step=1,black,line width=1pt] (-1.5,-1.5) grid (1.5,1.5);
		\draw[fill=black] (0,0) circle (2.5pt);

		\fill[opacity=0.2] (1,0) -- (0,1) -- (-1,1) -- (-1,0) --
		(0,-1) -- (1,-1) -- cycle;

		\draw[line width=1.5pt,color=black] (1,0) -- (0,1) -- (-1,1) -- (-1,0) -- (0,-1) -- (1,-1) -- cycle;

		\draw [fill=black] (1,0) circle (1.5pt);
		\draw [fill=black] (0,1) circle (1.5pt);
		\draw [fill=black] (-1,1) circle (1.5pt);
		\draw [fill=black] (-1,0) circle (1.5pt);
		\draw [fill=black] (0,-1) circle (1.5pt);
		\draw [fill=black] (1,-1) circle (1.5pt);

	\end{tikzpicture}
	\begin{tikzpicture}[x=0.5cm,y=0.5cm]
		\draw[step=2,black,line width=1pt] (-2.5,-2.5) grid (2.5,2.5);
		\draw[help lines,step=1,black,line width=0.2pt] (-2.5,-2.5) grid (2.5,2.5);
		\draw [fill=black] (0,0) circle (2.5pt);

		\fill[line width=2pt,color=black,fill=black,fill,opacity=0.2]
		(1,1)--(0,2)--(-1,2)--(-2,1)--(-2,0)--(-1,-1)--(1,-2)--(2,-2)--(2,-1)--cycle;

		\draw[line width=1.5pt, color=black]
		(1,1)--(0,2)--(-1,2)--(-2,1)--(-2,0)--(-1,-1)--(1,-2)--(2,-2)--(2,-1)--cycle;

		\draw [fill=black] (1,1) circle (1.5pt);
		\draw [fill=black] (0,2) circle (1.5pt);
		\draw [fill=black] (-1,2) circle (1.5pt);
		\draw [fill=black] (-2,1) circle (1.5pt);
		\draw [fill=black] (-2,0) circle (1.5pt);
		\draw [fill=black] (-1,-1) circle (1.5pt);
		\draw [fill=black] (1,-2) circle (1.5pt);
		\draw [fill=black] (2,-2) circle (1.5pt);
		\draw [fill=black] (2,-1) circle (1.5pt);

	\end{tikzpicture}
	\begin{tikzpicture}[x=0.4cm,y=0.4cm]
		\draw[step=3,black,line width=1pt] (-3.5,-3.5) grid (3.5,3.5);
		\draw[help lines,step=1,black,line width=0.2pt] (-3.5,-3.5) grid (3.5,3.5);
		\draw [fill=black] (0,0) circle (2.5pt);

		\fill[line width=2pt,color=black,fill=black,fill,opacity=0.2]
		(1,-3)--(2,-3)--(3,-2)--(3,-1)--(2,1)--(1,2)--(-1,3)--(-2,3)--(-3,2)--(-3,1)--(-2,-1)--(-1,-2)--cycle;
		\draw[line width=1.5pt, color=black]
		(1,-3)--(2,-3)--(3,-2)--(3,-1)--(2,1)--(1,2)--(-1,3)--(-2,3)--(-3,2)--(-3,1)--(-2,-1)--(-1,-2)--cycle;\draw [fill=black] (1,-3) circle (1.5pt);
		\draw [fill=black] (2,-3) circle (1.5pt);
		\draw [fill=black] (3,-2) circle (1.5pt);
		\draw [fill=black] (3,-1) circle (1.5pt);
		\draw [fill=black] (2,1) circle (1.5pt);
		\draw [fill=black] (1,2) circle (1.5pt);
		\draw [fill=black] (-1,3) circle (1.5pt);
		\draw [fill=black] (-2,3) circle (1.5pt);
		\draw [fill=black] (-3,2) circle (1.5pt);
		\draw [fill=black] (-3,1) circle (1.5pt);
		\draw [fill=black] (-2,-1) circle (1.5pt);
		\draw [fill=black] (-1,-2) circle (1.5pt);

	\end{tikzpicture}
	\begin{tikzpicture}[x=0.4cm,y=0.4cm]
		\draw[step=3,black,line width=1pt] (-3.5,-3.5) grid (3.5,3.5);
		\draw[help lines,step=1,black,line width=0.2pt] (-3.5,-3.5) grid (3.5,3.5);
		\draw [fill=black] (0,0) circle (2.5pt);
		\fill[line width=2pt,color=black,fill=black,fill,opacity=0.2]
		(1,3)--(0,3)--(-2,2)--(-3,1)--(-3,0)--(-2,-2)--(-1,-3)--(0,-3)--(2,-2)--(3,-1)--(3,0)--(2,2)--cycle;
		\draw[line width=1.5pt, color=black]
		(1,3)--(0,3)--(-2,2)--(-3,1)--(-3,0)--(-2,-2)--(-1,-3)--(0,-3)--(2,-2)--(3,-1)--(3,0)--(2,2)--cycle;\draw [fill=black] (1,3) circle (1.5pt);
		\draw [fill=black] (0,3) circle (1.5pt);
		\draw [fill=black] (-2,2) circle (1.5pt);
		\draw [fill=black] (-3,1) circle (1.5pt);
		\draw [fill=black] (-3,0) circle (1.5pt);
		\draw [fill=black] (-2,-2) circle (1.5pt);
		\draw [fill=black] (-1,-3) circle (1.5pt);
		\draw [fill=black] (0,-3) circle (1.5pt);
		\draw [fill=black] (2,-2) circle (1.5pt);
		\draw [fill=black] (3,-1) circle (1.5pt);
		\draw [fill=black] (3,0) circle (1.5pt);
		\draw [fill=black] (2,2) circle (1.5pt);
	\end{tikzpicture}
	\begin{tikzpicture}[x=0.25cm,y=0.25cm]
		\draw[step=5,black,line width=1pt] (-6.5,-6.5) grid (6.5,6.5);
		\draw[help lines,step=1,black,line width=0.2pt] (-6.5,-6.5) grid (6.5,6.5);
		\draw [fill=black] (0,0) circle (2.5pt);

		\fill[line width=2pt,color=black,fill=black,fill,opacity=0.2]
		(4,2)--(3,3)--(0,5)--(-2,6)--(-3,6)--(-5,5)--(-6,4)--(-6,3)--(-5,0)--(-4,-2)--(-3,-3)--(0,-5)--(2,-6)--(3,-6)--(5,-5)--(6,-4)--(6,-3)--(5,0)--cycle;
		\draw[line width=1.5pt, color=black]
		(4,2)--(3,3)--(0,5)--(-2,6)--(-3,6)--(-5,5)--(-6,4)--(-6,3)--(-5,0)--(-4,-2)--(-3,-3)--(0,-5)--(2,-6)--(3,-6)--(5,-5)--(6,-4)--(6,-3)--(5,0)--cycle;\draw [fill=black] (4,2) circle (1.5pt);
		\draw [fill=black] (3,3) circle (1.5pt);
		\draw [fill=black] (0,5) circle (1.5pt);
		\draw [fill=black] (-2,6) circle (1.5pt);
		\draw [fill=black] (-3,6) circle (1.5pt);
		\draw [fill=black] (-5,5) circle (1.5pt);
		\draw [fill=black] (-6,4) circle (1.5pt);
		\draw [fill=black] (-6,3) circle (1.5pt);
		\draw [fill=black] (-5,0) circle (1.5pt);
		\draw [fill=black] (-4,-2) circle (1.5pt);
		\draw [fill=black] (-3,-3) circle (1.5pt);
		\draw [fill=black] (0,-5) circle (1.5pt);
		\draw [fill=black] (2,-6) circle (1.5pt);
		\draw [fill=black] (3,-6) circle (1.5pt);
		\draw [fill=black] (5,-5) circle (1.5pt);
		\draw [fill=black] (6,-4) circle (1.5pt);
		\draw [fill=black] (6,-3) circle (1.5pt);
		\draw [fill=black] (5,0) circle (1.5pt);
	\end{tikzpicture}
\end{figure}

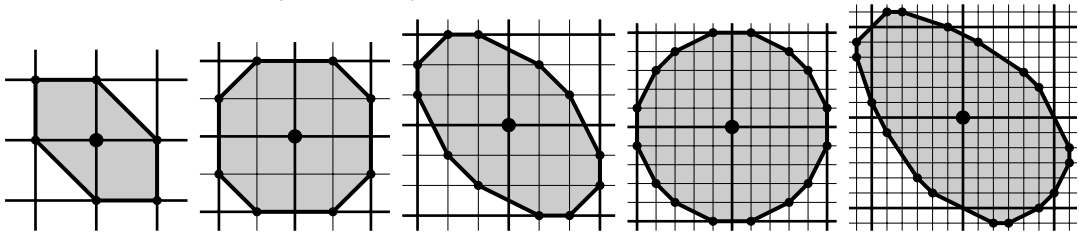
\begin{figure}
	\caption{The almost \( k \)-hollow LDP polygons with maximal number of vertices for \( k \in
		\{1,2,3,5,6\} \).}
	\label{fig:maximal_number_of_vertices_ldp}
	\begin{tikzpicture}[x=0.8cm,y=0.8cm]

		\draw[step=1,black,line width=1pt] (-1.5,-1.5) grid (1.5,1.5);
		\draw [fill=black] (0,0) circle (2.5pt);

		\fill[line width=2pt,color=black,fill=black,fill
			opacity=0.2] (1,0) -- (0,1) -- (-1,1) -- (-1,0) --
		(0,-1) -- (1,-1) -- cycle;

		\draw[line width=1.5pt,color=black] (1,0) -- (0,1) -- (-1,1) -- (-1,0) -- (0,-1) -- (1,-1) -- cycle;

		\draw [fill=black] (1,0) circle (1.5pt);
		\draw [fill=black] (0,1) circle (1.5pt);
		\draw [fill=black] (-1,1) circle (1.5pt);
		\draw [fill=black] (-1,0) circle (1.5pt);
		\draw [fill=black] (0,-1) circle (1.5pt);
		\draw [fill=black] (1,-1) circle (1.5pt);

	\end{tikzpicture}
	\begin{tikzpicture}[x=0.5cm,y=0.5cm]
		\draw[step=2,black,line width=1pt] (-2.5,-2.5) grid (2.5,2.5);
		\draw[help lines,step=1,black,line width=0.2pt] (-2.5,-2.5) grid (2.5,2.5);
		\draw [fill=black] (0,0) circle (2.5pt);

		\fill[line width=2pt,color=black,fill=black,fill,opacity=0.2]
		(1,-2)--(2,-1)--(2,1)--(1,2)--(-1,2)--(-2,1)--(-2,-1)--(-1,-2)--cycle;
		\draw[line width=1.5pt, color=black]
		(1,-2)--(2,-1)--(2,1)--(1,2)--(-1,2)--(-2,1)--(-2,-1)--(-1,-2)--cycle;\draw [fill=black] (1,-2) circle (1.5pt);
		\draw [fill=black] (2,-1) circle (1.5pt);
		\draw [fill=black] (2,1) circle (1.5pt);
		\draw [fill=black] (1,2) circle (1.5pt);
		\draw [fill=black] (-1,2) circle (1.5pt);
		\draw [fill=black] (-2,1) circle (1.5pt);
		\draw [fill=black] (-2,-1) circle (1.5pt);
		\draw [fill=black] (-1,-2) circle (1.5pt);

	\end{tikzpicture}
	\begin{tikzpicture}[x=0.4cm,y=0.4cm]
		\draw[step=3,black,line width=1pt] (-3.5,-3.5) grid (3.5,3.5);
		\draw[help lines,step=1,black,line width=0.2pt] (-3.5,-3.5) grid (3.5,3.5);
		\draw [fill=black] (0,0) circle (2.5pt);

		\fill[line width=2pt,color=black,fill=black,fill,opacity=0.2]
		(1,-3)--(2,-3)--(3,-2)--(3,-1)--(2,1)--(1,2)--(-1,3)--(-2,3)--(-3,2)--(-3,1)--(-2,-1)--(-1,-2)--cycle;
		\draw[line width=1.5pt, color=black]
		(1,-3)--(2,-3)--(3,-2)--(3,-1)--(2,1)--(1,2)--(-1,3)--(-2,3)--(-3,2)--(-3,1)--(-2,-1)--(-1,-2)--cycle;\draw [fill=black] (1,-3) circle (1.5pt);
		\draw [fill=black] (2,-3) circle (1.5pt);
		\draw [fill=black] (3,-2) circle (1.5pt);
		\draw [fill=black] (3,-1) circle (1.5pt);
		\draw [fill=black] (2,1) circle (1.5pt);
		\draw [fill=black] (1,2) circle (1.5pt);
		\draw [fill=black] (-1,3) circle (1.5pt);
		\draw [fill=black] (-2,3) circle (1.5pt);
		\draw [fill=black] (-3,2) circle (1.5pt);
		\draw [fill=black] (-3,1) circle (1.5pt);
		\draw [fill=black] (-2,-1) circle (1.5pt);
		\draw [fill=black] (-1,-2) circle (1.5pt);

	\end{tikzpicture}
	\begin{tikzpicture}[x=0.25cm,y=0.25cm]
		\draw[step=5,black,line width=1pt] (-5.5,-5.5) grid (5.5,5.5);
		\draw[help lines,step=1,black,line width=0.2pt] (-5.5,-5.5) grid (5.5,5.5);
		\draw [fill=black] (0,0) circle (2.5pt);

		\fill[line width=2pt,color=black,fill=black,fill,opacity=0.2]
		(1,-5)--(3,-4)--(4,-3)--(5,-1)--(5,1)--(4,3)--(3,4)--(1,5)--(-1,5)--(-3,4)--(-4,3)--(-5,1)--(-5,-1)--(-4,-3)--(-3,-4)--(-1,-5)--cycle;
		\draw[line width=1.5pt, color=black]
		(1,-5)--(3,-4)--(4,-3)--(5,-1)--(5,1)--(4,3)--(3,4)--(1,5)--(-1,5)--(-3,4)--(-4,3)--(-5,1)--(-5,-1)--(-4,-3)--(-3,-4)--(-1,-5)--cycle;\draw [fill=black] (1,-5) circle (1.5pt);
		\draw [fill=black] (3,-4) circle (1.5pt);
		\draw [fill=black] (4,-3) circle (1.5pt);
		\draw [fill=black] (5,-1) circle (1.5pt);
		\draw [fill=black] (5,1) circle (1.5pt);
		\draw [fill=black] (4,3) circle (1.5pt);
		\draw [fill=black] (3,4) circle (1.5pt);
		\draw [fill=black] (1,5) circle (1.5pt);
		\draw [fill=black] (-1,5) circle (1.5pt);
		\draw [fill=black] (-3,4) circle (1.5pt);
		\draw [fill=black] (-4,3) circle (1.5pt);
		\draw [fill=black] (-5,1) circle (1.5pt);
		\draw [fill=black] (-5,-1) circle (1.5pt);
		\draw [fill=black] (-4,-3) circle (1.5pt);
		\draw [fill=black] (-3,-4) circle (1.5pt);
		\draw [fill=black] (-1,-5) circle (1.5pt);
	\end{tikzpicture}
	\begin{tikzpicture}[x=0.2cm,y=0.2cm]
		\draw[step=6,black,line width=1pt] (-7.5,-7.5) grid (7.5,7.5);
		\draw[help lines,step=1,black,line width=0.2pt] (-7.5,-7.5) grid (7.5,7.5);
		\draw [fill=black] (0,0) circle (2.5pt);

		\fill[line width=2pt,color=black,fill=black,fill,opacity=0.2]
		(7,-3)--(7,-2)--(5,2)--(4,3)--(1,5)--(-1,6)--(-4,7)--(-5,7)--(-7,5)--(-7,4)--(-6,1)--(-5,-1)--(-3,-4)--(-2,-5)--(2,-7)--(3,-7)--(5,-6)--(6,-5)--cycle;
		\draw[line width=1.5pt, color=black]
		(7,-3)--(7,-2)--(5,2)--(4,3)--(1,5)--(-1,6)--(-4,7)--(-5,7)--(-7,5)--(-7,4)--(-6,1)--(-5,-1)--(-3,-4)--(-2,-5)--(2,-7)--(3,-7)--(5,-6)--(6,-5)--cycle;\draw [fill=black] (7,-3) circle (1.5pt);
		\draw [fill=black] (7,-2) circle (1.5pt);
		\draw [fill=black] (5,2) circle (1.5pt);
		\draw [fill=black] (4,3) circle (1.5pt);
		\draw [fill=black] (1,5) circle (1.5pt);
		\draw [fill=black] (-1,6) circle (1.5pt);
		\draw [fill=black] (-4,7) circle (1.5pt);
		\draw [fill=black] (-5,7) circle (1.5pt);
		\draw [fill=black] (-7,5) circle (1.5pt);
		\draw [fill=black] (-7,4) circle (1.5pt);
		\draw [fill=black] (-6,1) circle (1.5pt);
		\draw [fill=black] (-5,-1) circle (1.5pt);
		\draw [fill=black] (-3,-4) circle (1.5pt);
		\draw [fill=black] (-2,-5) circle (1.5pt);
		\draw [fill=black] (2,-7) circle (1.5pt);
		\draw [fill=black] (3,-7) circle (1.5pt);
		\draw [fill=black] (5,-6) circle (1.5pt);
		\draw [fill=black] (6,-5) circle (1.5pt);
	\end{tikzpicture}
\end{figure}

\section{Half-integral polygons}
\label{sec:half_integral_polygons}

In this section, we classify half-integral polygons with few interior
lattice points. Our method is inspired by Castryck's algorithm for
classifying of lattice polygons \cite{Cas12}, where the notion of an
\emph{internal polygon} (i.e. a polygon of the form \( P^{(1)} \)) is
replaced by the \emph{Fine interior} as introduced in
\cite[§4.2.]{Fin83} (see also \cite[II, 4 Appendix]{Rei85}). For more
information on the Fine interior and its recent use in combinatorial
mirror symmetry and combinatorial birational geometry, see
\cite{Bat17,Bat23}.

Recall that for a lattice polytope \( P \subseteq \Z^d \), we set \(
P^{(1)} := \convhull(P^{\circ} \cap \Z^d) \). On the other hand \(
P^{(-1)} \) is the rational polytope obtained by "moving out" all
facets of \( P \) by one, see section~\ref{subsec:maximal_polygons}.

\begin{defi}

	The \emph{Fine interior} of a rational polytope $P\subseteq \R^d$ is
	\[
		F(P)\ :=\ \{x\in \R^d \mid \left<x,n\right>\geq \min \{ \left<y,n\right> \mid y\in P\} +1 \  \forall n\in \Z^d\setminus \{0\}\}.
	\]
\end{defi}

\begin{rem}
	\( F(P) \) is a rational polytope. Moreover, if \( P \) is a
	lattice polytope, we have \( P^{(1)} \subseteq
	F(P) \) by \cite[2.6 f.]{Bat17}. For lattice polygons, we even have
	equality by \cite[2.9]{Bat17}. In this sense, a rational polytope
	that is a Fine interior of another one can be seen as a
	generalization of an internal polygon.
\end{rem}

We have seen a criterion for the maximality of lattice polygons using
internal polygons in \ref{prp:lattice_polygon_maximal_criterion}.
Here we focus first on the polytopes that are maximal with respect to
inclusion among those sharing same Fine interior. The following is a
direct consequence of the definition of the Fine interior.

\begin{prop}
	A full-dimensional rational polytope $P$ is the Fine interior of a rational polytope if and only if $F(P^{(-1)})=P$. In this case $P^{(-1)}$ contains every rational polytope, which has $P$ as its Fine interior.
\end{prop}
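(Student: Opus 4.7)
The plan is to first establish the ``maximum'' half of the statement, namely that any rational polytope $Q$ with $F(Q)=P$ satisfies $Q\subseteq P^{(-1)}$. Both directions of the equivalence then drop out from a short monotonicity argument and a direct computation with the facet inequalities of $P$.

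For preparation I would record two easy ingredients. The first is that $F$ is monotone with respect to inclusion, since $Q_1\subseteq Q_2$ implies $\min_{y\in Q_1}\langle y,n\rangle\geq \min_{y\in Q_2}\langle y,n\rangle$ for every $n$, and hence $F(Q_1)\subseteq F(Q_2)$. The second is that if $P$ has facet description $\{\langle n_e,x\rangle\geq c_e\}_e$ with primitive inner normals $n_e$, then by the very definition of $P^{(-1)}$ we have $\min_{y\in P^{(-1)}}\langle y,n_e\rangle=c_e-1$ for each $e$.

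The heart of the argument is the following tightness claim: if $P=F(Q)$ and $n_e$ is a primitive inner facet normal of $P$, then $c_e=\min_{y\in Q}\langle y,n_e\rangle+1$. The point is that the Fine interior is cut out as the intersection of the halfspaces $\{\langle x,n\rangle\geq\min_{y\in Q}\langle y,n\rangle+1\}$ over all $n\in\Z^d\setminus\{0\}$, and among the positive integer multiples $\lambda n_e$ of $n_e$ the primitive choice $\lambda=1$ yields the strongest constraint, since dividing the constraint for $\lambda n_e$ by $\lambda$ gives $\langle x,n_e\rangle\geq \min_{y\in Q}\langle y,n_e\rangle+\tfrac{1}{\lambda}$. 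Since the facet of $P$ with primitive inner normal $n_e$ must be cut out by the tightest of the Fine interior constraints in the ray of $n_e$, the claim follows. Intersecting the resulting inclusions $Q\subseteq\{\langle n_e,x\rangle\geq c_e-1\}$ over all facets of $P$ yields $Q\subseteq P^{(-1)}$, which is the maximality statement.

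With these pieces, the equivalence follows. If $P=F(Q)$ for some rational polytope $Q$, then $Q\subseteq P^{(-1)}$ by the tightness step, and monotonicity of $F$ gives $P=F(Q)\subseteq F(P^{(-1)})$. The reverse inclusion $F(P^{(-1)})\subseteq P$ is a direct check: for $x\in F(P^{(-1)})$ and any primitive facet normal $n_e$ of $P$, the Fine interior inequality combined with the second preparatory observation gives $\langle n_e,x\rangle\geq (c_e-1)+1=c_e$, so $x\in P$. The backward direction of the equivalence is immediate since $F(P^{(-1)})=P$ already exhibits $P$ as a Fine interior. The main obstacle in this plan is the tightness claim; the assumption that $P$ is full-dimensional enters here to guarantee that $P$ has a genuine facet description with primitive inner normals, and everything else is bookkeeping.
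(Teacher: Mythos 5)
Your argument is correct, and it is worth noting that the paper itself gives no proof here -- it merely declares the proposition ``a direct consequence of the definition of the Fine interior'' -- so your write-up supplies the missing unwinding, and the route you take (tightness of the primitive facet constraints, hence $Q\subseteq P^{(-1)}$ for every $Q$ with $F(Q)=P$, then monotonicity of $F$ plus a direct check of $F(P^{(-1)})\subseteq P$) is exactly the natural one. Two small points deserve attention. First, your tightness claim tacitly assumes that every facet of $P=F(Q)$ lies on the boundary hyperplane of one of the defining halfspaces $\{\langle x,n\rangle\geq \min_{y\in Q}\langle y,n\rangle+1\}$; since the intersection is over infinitely many $n$, this is not automatic from convexity alone, but it does follow from the standard structure theory of the Fine interior (only the Hilbert basis elements of the maximal cones of the normal fan of $Q$ give essential constraints, so $F(Q)$ is a \emph{finite} intersection of these halfspaces), and once that is granted your ``$\lambda=1$ is the strongest constraint on the ray'' argument pins down $c_e=\min_{y\in Q}\langle y,n_e\rangle+1$. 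Second, your preparatory claim that $\min_{y\in P^{(-1)}}\langle y,n_e\rangle=c_e-1$ is actually false in general: moving all facets out by lattice distance one can render a facet inequality non-tight (for instance a polygon with facet normals $(1,0)$, $(3,1)$, $(3,-1)$ and suitable offsets loses the $(1,0)$-facet). Fortunately only the trivial inequality $\min_{y\in P^{(-1)}}\langle y,n_e\rangle\geq c_e-1$, which really is immediate from the definition of $P^{(-1)}$, is used in your verification of $F(P^{(-1)})\subseteq P$, so the proof stands; you should just weaken that observation to an inequality.
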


This result implies that we can classify all lattice polytopes with
given number of interior lattice points by first classifying their
Fine interiors, then getting all maximal ones by moving out the
facets and finally computing all subpolytopes. Recently \cite{Boh24}
gave a classification of two-dimensional Fine interiors of lattice
$3$-polytopes, which allows us to follow this path with a small
detour in dimension $3$. We give the essential results from
\cite{Boh24} for our classification in three steps. The first one
gives us a connection between dimension $2$ and $3$.

\begin{prop}

	The Fine interior of a lattice 3-polytope $P\subseteq \R^2\times
		[-1,1]$ with at least two interior lattice points and $\lw(P)=2$ has
	vertices in $\frac{1}{2}\Z^2 \times \{0\}$ and depends only on the
	half-integral polygon $P\cap (\R^2\times \{0\})$. Moreover, the Fine
	interior of such a lattice 3-polytope is of dimension $1$ if and only
	if $P\cap (\R^2\times \{0\})$ is affine unimodular equivalent to a
	polygon in $\R\times [-1,1] \times \{0\}$.

\end{prop}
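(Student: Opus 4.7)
The plan is to establish the three claims in sequence: that $F(P) \subseteq \R^2 \times \{0\}$, that $F(P)$ is determined by the half-integral polygon $Q := P \cap (\R^2 \times \{0\})$ with vertices in $\tfrac{1}{2}\Z^2 \times \{0\}$, and that $\dim F(P) = 1$ is equivalent to $Q$ being affinely unimodularly equivalent to a polygon in $\R \times [-1,1]$. For the first claim, the hypothesis $\lw(P) = 2$ paired with $P \subseteq \R^2 \times [-1,1]$ forces the lattice width in direction $(0,0,1)$ to equal $2$, so $\min_{y \in P} y_3 = -1$ and $\max_{y \in P} y_3 = 1$. Feeding $n = (0,0,\pm 1)$ into the Fine-interior inequalities then yields $x_3 = 0$ throughout $F(P)$.

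To prove dependence on $Q$, I would first verify that $Q$ is half-integral, since its vertices are either integer vertices of $P$ on the plane $y_3 = 0$ or midpoints of lattice edges of $P$ joining a vertex at $y_3 = 1$ with one at $y_3 = -1$. Writing $P_t := P \cap (\R^2 \times \{t\})$ and identifying $P_t$ with its image in $\R^2$, convex splitting gives $Q = \conv\bigl(P_0 \cup \tfrac{1}{2}(P_+ + P_-)\bigr)$. For a primitive $m \in \Z^2 \setminus \{0\}$, I then optimize the inequality $\langle x', m \rangle \geq \min_{y \in P}(\langle y', m \rangle + n_3 y_3) + 1$ over $n_3 \in \Z$. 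Setting $\alpha_0 := \min\{\langle v', m\rangle : v_3 = 0\}$ and $\alpha_\pm := \min\{\langle v', m\rangle : v_3 = \pm 1\}$ over vertices $v$ of $P$, the inner minimum is the concave piecewise-linear function $\min(\alpha_0, \alpha_+ + n_3, \alpha_- - n_3)$, with real supremum $\min(\alpha_0, \tfrac{\alpha_+ + \alpha_-}{2}) = \beta_m := \min_{q \in Q}\langle q, m \rangle$ by the decomposition above. A parity check shows the integer supremum equals $\beta_m$ when $\beta_m \in \Z$ and $\beta_m - \tfrac{1}{2}$ otherwise, the noninteger case occurring exactly when the minimizer of $\langle \cdot, m \rangle$ over $Q$ is a half-integer midpoint vertex. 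The effective inequality thus reads $\langle x', m \rangle \geq \mu(m)$ with $\mu(m) \in \tfrac{1}{2}\Z$ computed from $Q$ alone. Scaling by $2$ gives a defining inequality system for $2F(P)$ with integer right-hand sides, and a case analysis of which primitive normals actually bound $F(P)$ then yields vertices in $\tfrac{1}{2}\Z^2 \times \{0\}$.

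The dimension statement reduces to the existence of a primitive $m$ with $\mu(m) + \mu(-m) = 0$, equivalently $\lw_m(Q) = \epsilon_m + \epsilon_{-m} \in \{1, \tfrac{3}{2}, 2\}$ where $\epsilon_m \in \{\tfrac{1}{2},1\}$ is the shift $\mu(m) - \beta_m$ determined by the parity of $\beta_m$. In each such case an integer translation places $Q$ into $\R \times [-1,1]$ after a unimodular map sending $m$ to $(0,1)$; the converse is immediate from any realization $Q \subseteq \R \times [-1,1]$, which directly yields the direction $m = (0,1)$. The main obstacle I expect is justifying the half-integrality of the vertices of $F(P)$: a pair of facet inequalities with right-hand sides in $\tfrac{1}{2}\Z$ and integer normals generally intersects in a rational point rather than a half-integer one, so one must show that consecutive facet normals of $F(P)$ form unimodular pairs. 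This is the technical core of the argument and requires the case-by-case combinatorial bookkeeping developed in~\cite{Boh24}.
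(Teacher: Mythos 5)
The paper never proves this proposition: it is quoted as one of ``the essential results from \cite{Boh24}'' and used as a black box, so there is no in-paper argument to compare yours against. Judged on its own, most of your outline is correct. The width argument is right: $\lw(P)=2$ together with $P\subseteq\R^2\times[-1,1]$ forces $\width_{(0,0,1)}(P)=2$, and the normals $(0,0,\pm1)$ then pin $x_3=0$ on $F(P)$. The elimination of $n_3$ is also sound, with one notational fix: your $P_0$ is by definition all of $Q$, so the convex splitting should be stated as $Q=\convhull\bigl(\convhull(V_0)\cup\tfrac{1}{2}(\convhull(V_1)+\convhull(V_{-1}))\bigr)$ where $V_t$ denotes the set of vertices of $P$ at height $t$ (both $V_{\pm1}$ are nonempty by the width computation). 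With that, the real supremum of the inner minimum is indeed $\beta_m$, the integer supremum is $\beta_m$ or $\beta_m-\tfrac12$ according to whether $\beta_m\in\Z$, non-primitive $m$ contribute nothing stronger, and since $\mu(m)$ is read off from $Q$ alone, $F(P)$ depends only on $Q$. This is a genuinely useful self-contained argument for a claim the paper leaves entirely to the reference.

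Two points in the dimension statement need to be made explicit. First, the hypothesis of at least two interior lattice points is used (but never invoked by you) to get $P^{(1)}\subseteq F(P)$ and hence $\dim F(P)\geq 1$; without it you cannot upgrade ``$F(P)$ lies in a line'' to ``$\dim F(P)=1$'', nor turn $\mu(m)+\mu(-m)\geq 0$ into equality. Second, the converse is not quite ``immediate'': for $Q'\subseteq\R\times[-1,1]$ with $\width_{(0,1)}(Q')=2$ (resp.\ $\tfrac32$) you must check that the extremal values of the second coordinate are forced to be both integral (resp.\ one integral, one half-integral), so that $\epsilon_m+\epsilon_{-m}$ actually matches the width. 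The check is a three-line parity case distinction, but it is where the claim could fail if the endpoints behaved differently, so it should be written out.

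The genuine gap is the one you name yourself: half-integrality of the vertices of $F(P)$. Your reduction yields $2F(P)=\{x\mid\langle x,m\rangle\geq 2\mu(m)\ \forall m\}$ with integral right-hand sides, but a polygon cut out by integral normals and integral constants need not be a lattice polygon, and there is no shortcut through the two-dimensional theory: $2F(P)$ is contained in $F(2Q)=(2Q)^{(1)}$ but the inequality for $2F(P)$ is strictly stronger whenever $\beta_m\in\Z$, so the two generally differ. As written, this part of the statement is outsourced to \cite{Boh24} --- which is exactly what the paper does for the entire proposition, so you have proven strictly more than the paper presents, but not the full statement.
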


The second result gives a suitable notion of maximality in this
context.

\begin{prop}
	\label{prp:martin_third_step}
	A half-integral polygon $Q$ is the Fine interior of a lattice 3-tope if and only if there is a half-integral polygon $P_0\subseteq \R^2$ such that $Q$ is the Fine interior of the lattice 3-tope $P:=\conv{2P_0\times \{-1\}, (0,0,1)}$. Furthermore, there is a unique half-integral polygon $Q^-$ with $F(\conv{2Q^-\times \{-1\}, (0,0,1)})=Q$, which contains every half-integral polygon $P_0'$ for which
	\begin{align*}
		F(\conv{2P_0'\times \{-1\}, (0,0,1)})=Q.
	\end{align*}
\end{prop}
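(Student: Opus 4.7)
My plan is to handle the equivalence and the maximality statement in turn, relying on the preceding proposition and on an explicit computation of the halfspaces defining $F(P(P_0))$ on the slice $z=0$.

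The direction of the equivalence that assumes the existence of $P_0$ is immediate: whenever $P_0$ is half-integral, $2P_0$ is a lattice polygon, so $P(P_0) := \conv{2P_0 \times \{-1\},\ (0,0,1)}$ is a lattice 3-polytope with Fine interior $Q$. For the converse, I would start with a lattice 3-polytope $R$ satisfying $F(R) = Q$. After a unimodular transformation, the preceding proposition allows me to assume $R \subseteq \R^2 \times [-1,1]$, $\lw(R) = 2$, and $R$ has at least two interior lattice points. Setting $P_0 := R \cap (\R^2 \times \{0\})$ then yields a half-integral polygon. The pyramid $P(P_0)$ is itself a lattice 3-polytope in $\R^2 \times [-1,1]$ with $z=0$ slice $P_0$ and satisfies the same hypotheses, so by the preceding proposition $F(P(P_0))$ depends only on $P_0$ and hence equals $F(R) = Q$.

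For the construction of $Q^-$, I would first work out the halfspaces defining $F(P(P_0))$ on the slice $z = 0$. Using $\min_{x \in P(P_0)}\langle x,(n,n_3)\rangle = \min(n_3,\ 2\min_{p\in P_0}\langle p,n\rangle - n_3)$ and optimizing over $n_3 \in \Z$ for fixed primitive $n = (n_1, n_2) \in \Z^2 \setminus \{0\}$, one obtains
\[
    F(P(P_0))\ =\ \bigcap_{n\ \mathrm{primitive}} \{y \in \R^2 : \langle y, n\rangle \geq \lfloor \min_{p \in P_0}\langle p, n\rangle\rfloor + 1\}.
\]
Writing $c_n^Q := \min_{y \in Q}\langle y, n\rangle$, the equality $F(P(P_0')) = Q$ translates into $\lfloor \min_{p \in P_0'}\langle p, n\rangle\rfloor + 1 = c_n^Q$ on facet normals of $Q$ and $\leq c_n^Q$ on all other primitive $n$. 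I would then define $c_n^-$ as the largest $x \in \tfrac{1}{2}\Z$ satisfying $\lfloor x\rfloor + 1 \leq c_n^Q$, which is $c_n^Q - \tfrac{1}{2}$ when $c_n^Q \in \Z$ and $c_n^Q - 1$ otherwise, and set
\[
    Q^-\ :=\ \bigcap_{n\ \mathrm{primitive}} \{y \in \R^2 : \langle y, n\rangle \geq c_n^-\}.
\]

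To finish, I would check that $Q^-$ is a bounded half-integral polygon (since $Q$'s facets are moved out by at most $1$ in lattice distance and the offsets $c_n^-$ lie in $\tfrac{1}{2}\Z$), that $F(P(Q^-)) = Q$ by plugging $Q^-$ back into the displayed formula and verifying that each primitive-$n$ constraint recovers exactly $c_n^Q$ on facet normals and is weaker elsewhere, and that $P_0' \subseteq Q^-$ for every $P_0'$ with $F(P(P_0')) = Q$, since the necessary condition $\lfloor \min_{p \in P_0'}\langle p, n\rangle\rfloor + 1 \leq c_n^Q$ forces $\min_{p \in P_0'}\langle p, n\rangle \leq c_n^-$ by the very choice of $c_n^-$. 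The main obstacle is the verification $F(P(Q^-)) = Q$: one has to show that no non-facet primitive direction $n$ gives a constraint strictly tighter than $Q$'s own facet inequality in that direction after the outward redefinition of offsets, which requires controlling $\min_{q \in Q^-}\langle q, n\rangle$ in terms of the facet data of $Q$. A secondary subtlety lies in the converse of the equivalence, where the two-dimensionality of $Q$ must be used to ensure that both $R$ and $P(P_0)$ genuinely satisfy the lattice-width-$2$ and two-interior-point hypotheses of the preceding proposition.
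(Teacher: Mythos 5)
The paper offers no proof of this proposition at all --- it is quoted verbatim from \cite{Boh24} as one of three imported results --- so there is nothing to compare your argument against except its own internal logic. On that score, your opening moves are sound: the slice computation giving
\( F(\convhull(2P_0\times\{-1\},(0,0,1)))=\bigcap_{n}\{y\,:\,\langle y,n\rangle\geq\lfloor\min_{p\in P_0}\langle p,n\rangle\rfloor+1\} \)
is correct, and so is the translation of \( F(P(P_0'))=Q \) into ``\( \lfloor \min_{P_0'}\langle\cdot,n\rangle\rfloor+1\leq c_n^Q \) for all primitive \( n \), with equality at facet normals of \( Q \).''

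The maximality argument, however, contains a direction error that breaks the construction. The condition \( \lfloor\min_{P_0'}\langle\cdot,n\rangle\rfloor+1\leq c_n^Q \), equivalently \( \min_{P_0'}\langle\cdot,n\rangle\leq c_n^- \), says that \( P_0' \) must \emph{reach down to} the level \( c_n^- \) in direction \( n \); it is a lower bound on the size of \( P_0' \). Containment \( P_0'\subseteq\{y:\langle y,n\rangle\geq c_n^-\} \) would require the \emph{reverse} inequality \( \min_{P_0'}\langle\cdot,n\rangle\geq c_n^- \), so the final deduction \( P_0'\subseteq Q^- \) does not follow --- and is in fact false for your \( Q^- \). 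The only upper bounds on \( P_0' \) come from the tightness condition at the facet normals \( n_0 \) of \( Q \): it forces \( c_{n_0}^Q\in\Z \) and \( \min_{P_0'}\langle\cdot,n_0\rangle\geq c_{n_0}^Q-1 \), so every admissible \( P_0' \) lies in the polygon obtained by moving each facet of \( Q \) outward by lattice distance exactly \( 1 \) (not \( \tfrac{1}{2} \), as your \( c_{n_0}^-=c_{n_0}^Q-\tfrac{1}{2} \) would give), and is not constrained at all in non-facet directions. An admissible \( P_0' \) attaining \( \min\langle\cdot,n_0\rangle=c_{n_0}^Q-1 \) already escapes your halfspace \( \{\langle y,n_0\rangle\geq c_{n_0}^Q-\tfrac{1}{2}\} \). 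Symmetrically, your \( Q^- \) generically fails \( F(P(Q^-))=Q \): for a non-facet direction \( n \) one typically has \( \min_{Q^-}\langle\cdot,n\rangle>c_n^- \), so the corresponding constraint cuts into \( Q \). The correct candidate is the facet move-out by one (or the convex hull of its half-integral points), and the substantive work --- which you flag but do not carry out --- is to show that this polygon is half-integral and satisfies the containment condition for every non-facet direction; that verification is also where the ``only if'' content of the first sentence lives. A secondary gap: reducing an arbitrary lattice \( 3 \)-tope \( R \) with \( F(R)=Q \) to the hypotheses of the preceding proposition (realizability in \( \R^2\times[-1,1] \), lattice width \( 2 \), at least two interior lattice points) is not supplied by that proposition as stated and would itself need proof.
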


And the third gives us the classification which was done there.

\begin{class}
	\label{class:fine_interiors}
	There are exactly 24\,324\,158 half-integral polygons with at most $40$ lattice points, which are Fine interiors of a lattice 3-polytope.
\end{class}

For each polygon from Classification~\ref{class:fine_interiors}, we
can compute the polygon \( Q^{-} \) from
Proposition~\ref{prp:martin_third_step} and check for \( 2
\)-maximality. This gives us all \( 2 \)-maximal polygons with up to
\( 40 \) interior lattice points that cannot be realized in \( \R
\times [-1,1] \). The latter ones can be obtained with
Algorithm~\ref{algo:classify_maximal_polygons_m1p1}, hence we arrive
at the following classification.

\begin{class}[Data available at~\cite{springer_2024_13928298}]
	\label{class:half_integral_polygons}
	Below are the numbers \( \#_{\max}(i) \) of \( 2 \)-maximal polygons with \( i
	\) interior lattice points.
	\begin{center}
		{\scriptsize
			\begin{tabular}{c|cccccccccc}
				\( i \)            & 1           & 2           & 3           & 4        & 5        & 6      & 7   & 8   & 9  & 10 \\
				\( \#_{\max}(i) \) & 10          & 25          & 33          & 63       & 106      & 178    & 274 & 476 &
				693                & 1\,058
				\\\hline
				\( i \)            & 11          & 12          & 13          & 14       & 15       & 16     & 17  & 18  & 19 & 20 \\
				\( \#_{\max}(i) \) & 1\,561      & 2\,345      & 3\,431      & 4\,970   & 6\,712   & 9\,345 &
				12\,883            & 17\,667     & 23\,809
				                   & 31\,806
				\\\hline
				\( i \)            & 21          & 22          & 23          & 24       & 25       & 26     & 27  & 28  & 29 & 30 \\
				\( \#_{\max}(i) \) & 41\,394     & 54\,071     & 70\,088     & 90\,805  & 116\,613 &
				148\,877           & 187\,818    &
				236\,793           & 296\,708    & 370\,444
				\\\hline
				\( i \)            & 31          & 32          & 33          & 34       & 35       & 36     & 37  & 38  & 39 & 40 \\
				\( \#_{\max}(i) \) & 459\,667    & 569\,098    & 698\,986    & 857\,540 &
				1\,047\,830        & 1\,277\,349 & 1\,549\,049 & 1\,875\,058 &
				2\,257\,565        & 2\,714\,004                                                                                  \\
			\end{tabular}
		}
	\end{center}
    Moreover, we obtain the numbers \( \#_{\all}(i) \) of all \( 2
    \)-rational polygons as well as the numbers \( \#_{\ehr}(i) \) of
    distinct Ehrhart quasipolynomials:
	\begin{center}
		{\scriptsize
			\begin{tabular}{c|cccccc}
				\( i \)            & 1                & 2                & 3                & 4             & 5           & 6  \\
				\( \#_{\all}(i) \) & 5\,145           & 48\,639          & 249\,540         & 893\,402      & 2\,798\,638 &
				7\,299\,589                                                                                                    \\
                \( \#_{\ehr}(i) \) & 270 & 586 & 1\,060 & 1\,701 & 2\,525 &
                3\,577 \\\hline

				\( i \)            & 7                & 8                & 9                & 10            & 11          & 12 \\
				\( \#_{\all}(i) \) & 17\,556\,059     & 41\,005\,529     & 83\,848\,960     & 170\,081\,198 &
				339\,219\,561      & 617\,457\,338                                                                             \\
                \( \#_{\ehr}(i) \) & 4\,875 & 6\,459 & 8\,343 & 10\,565 &
                13\,138 & 16\,111 \\\hline
				\( i \)            & 13               & 14               & 15               & 16                               \\
				\( \#_{\all}(i) \) & 1\,140\,740\,735 & 2\,040\,032\,893 & 3\,454\,142\,981 &
				5\,957\,874\,274                                                                                               \\
                \( \#_{\ehr}(i) \) & 19\,488 & 23\,307 & 27\,596 &
                32\,381
			\end{tabular}
		}
	\end{center}
\end{class}

We end this section with a conjecture on the number of Ehrhart
quasipolynomials of half-integral polygons. Recall that for a lattice
polygon \( P \), the Ehrhart polynomial is of the form \(
At^2+\frac{b}{2}t+1 \), where \( A \) is the euclidian area and \( b
\) the number of boundary lattice points. By Pick's formula
\cite{Pic99}, fixing \( i \) and \( b \) uniquely determines \( A \).
Moreover, Scott's inequality \cite{Sco76} states that for lattice
polygons, \( b \leq 2i+6 \) holds for \( i \geq 2 \). Lattice
polygons clearly satisfy \( b \geq 3 \) and one can show that every
intermediate value of \( b \) is also attained, see e.g. \cite[Section
1]{HS09}.
This implies that the number of distinct Ehrhart polynomials of
lattice polygons is \( 2i+4 \) for \( i \geq 2 \). Looking at our
data for \( k=2 \), we found that there is also is a polynomial
expression for the number of distinct Ehrhart quasipolynomials,
provided \( b \geq 2 \). Hence we raise the following conjecture,
which we have verified up to \( i = 16 \).

\begin{conj}
	There are exactly
	\begin{align*}
		\frac{9}{2}i^3+36i^2+\frac{175}{2}i+53
	\end{align*}
	Ehrhart quasipolynomials of half-integral polygons with $i\in
		\Z_{\geq 2}$ interior lattice points and at least $2$ boundary lattice points.
\end{conj}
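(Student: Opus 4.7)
The plan is to identify a small set of integer invariants that determine the Ehrhart quasipolynomial of a half-integral polygon, characterise the realised values of those invariants, and then count. For a half-integral polygon $P$, write $\ehr_P(t) = At^2 + c_1(t)t + c_2(t)$; since the period divides $2$, there are only four unknown values $c_j^{\mathrm{odd}}, c_j^{\mathrm{even}}$ for $j \in \{1,2\}$. The identity $\ehr_P(2t) = \ehr_{2P}(t)$ applied to the lattice polygon $2P$, together with Pick's formula for $2P$, yields $c_1^{\mathrm{even}} = b_{1/2}/4$ and $c_2^{\mathrm{even}} = 1$, where $b_{1/2}(P) := b(2P)$. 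Combining $\ehr_P(1) = i+b$ with Ehrhart reciprocity $\ehr_P(-1)=i$ and the period-$2$ identity $c_j(-1)=c_j^{\mathrm{odd}}$ yields $c_1^{\mathrm{odd}} = b/2$ and $c_2^{\mathrm{odd}} = i + b/2 - A$. Hence, for fixed $i$, the Ehrhart quasipolynomial is completely determined by the triple $(A, b, b_{1/2})$, and the task reduces to counting the set $S_i$ of triples realised by some half-integral polygon with $i$ interior lattice points and $b\geq 2$ boundary lattice points.

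The next step is to describe $S_i$ by inequalities. Pick's formula for $2P$ reads $8A = 2 i_{1/2} + b_{1/2} - 2$, so $A$ is determined by $i_{1/2}$ and $b_{1/2}$; it is thus equivalent to classify tuples $(b, b_{1/2}, i_{1/2})$ with $i$ fixed. Scott's inequality applied to the lattice polygon $2P$ gives the sharp bound $b_{1/2} \leq 2i_{1/2} + 6$, whose hypothesis $i_{1/2}\geq 2$ is automatic since $i_{1/2}\geq i \geq 2$. A companion Scott-type bound on $b$ in terms of $i$, together with the monotonicity constraints $b \leq b_{1/2}$ and $i\leq i_{1/2}$ and any surviving parity constraints coming from the distribution of half-lattice points along each edge of $P$, should completely cut out $S_i$. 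Here the lattice-width estimates from~\cite{Boh23} used in sections~\ref{sec:rational_polygons_in_m1p1}--\ref{sec:rational_polygons_with_exactly_one_interior_lattice_points} provide the natural technical toolkit.

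The final step is realisation: show that every triple satisfying the inequalities of Step~2 actually occurs. For each target triple I would construct an explicit half-integral polygon with the prescribed invariants by starting from a trapezoid in the spirit of Proposition~\ref{polygons_in_m11} and perturbing it along three independent directions: extending an edge by a primitive lattice vector to raise $b$ and $b_{1/2}$ in tandem, extending an edge by a primitive non-lattice half-lattice vector to raise only $b_{1/2}$, and sliding a single vertex by a half-lattice step to modify only $A$. Once each admissible triple is produced this way, a direct enumeration of $|S_i|$ can be compared against $\frac{9}{2}i^3 + 36i^2 + \frac{175}{2}i + 53$, the cubic degree matching the three independent degrees of freedom in $(b, b_{1/2}, A)$.

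The main obstacle is the realisation step. Even for lattice polygons the analogous Scott-type surjectivity---that every $b \in \{3,\dots,2i+6\}$ is attained---requires an explicit construction, see~\cite{HS09}; with two additional parameters there is ample room for small-$i$ or parity obstructions which, if present, would introduce correction terms not visible in the conjectured polynomial. A careful accounting, cross-checked against the maximal polygon classification of section~\ref{sec:half_integral_polygons} and the shaving algorithm of section~\ref{subsec:subpolygons} at small $i$, would be needed to confirm the polynomial exactly and to isolate the arithmetic source of the constants $\frac{175}{2}$ and $53$.
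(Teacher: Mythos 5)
This statement is a \emph{conjecture} in the paper: the authors do not prove it, but only report that it matches their computational classification for all $i \leq 16$. So there is no proof in the paper to compare against, and your proposal should be judged on whether it closes the gap on its own. It does not. Your first step is correct and genuinely useful: writing $\ehr_P(t)=At^2+c_1(t)t+c_2(t)$ with period dividing $2$, the identities $\ehr_P(2t)=\ehr_{2P}(t)$, $\ehr_P(1)=i+b$ and reciprocity $\ehr_P(-1)=i$ do pin down all four coefficient values in terms of $\bigl(A,\,b(P),\,b(2P)\bigr)$, and distinct triples give distinct quasipolynomials, so the problem correctly reduces to counting realised triples. This is a real reduction that goes beyond anything stated in the paper.

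However, the remaining two steps are only a program, and they are exactly where the difficulty lies. You never determine the set $S_i$ of realisable triples: the ``companion Scott-type bound on $b$'' and the ``surviving parity constraints'' are placeholders, not established inequalities, and without an exact description of $S_i$ no count can be performed, let alone one producing the specific polynomial $\tfrac{9}{2}i^3+36i^2+\tfrac{175}{2}i+53$ (whose non-integer coefficients already signal nontrivial parity/congruence structure in $S_i$). The realisation step is likewise unexecuted; as you note yourself, even the one-parameter analogue for lattice polygons requires an explicit construction, and here the three perturbation moves you describe are not shown to be independent, to preserve $i$, or to reach every admissible triple. Finally, the hypothesis ``at least $2$ boundary lattice points'' must enter the description of $S_i$ somewhere, and your outline never uses it. So the proposal is a plausible and well-motivated strategy toward proving the conjecture, but it is not a proof, and the statement remains open.
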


\section{The general case}
\label{sec:general_case}

In this section, we show how a classification of \( k \)-maximal
polygons with \( i \in \Z_{\geq 0} \) interior lattice points can be
derived from a sufficiently large classification of lattice polygons.
First, we need a suitable upper volume bound of rational polygons in
terms of their number of interior lattice points.

\begin{lemma}
	\label{lem:volume_bound_not_m1p1}
	Let \( P \) be a rational polygon with \( k = \denom(P) \) and \( i
	:= i(P) \) which cannot be realized in \( \R \times [-1,1] \). Then we have
	\[
		\Vol_{k}(P) \leq
		\max\left(k^2(4i+5),\ \frac{1}{2}k(k+2)^2(i+1)\right).
	\]
\end{lemma}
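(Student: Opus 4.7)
The plan is to estimate $\Vol_k(P)$ via an enclosing rectangle after aligning a lattice width direction with the $x$-axis. After an affine unimodular transformation I may assume that $(1,0)$ is a lattice width direction of $P$, so that $P \subseteq [x_\ell, x_r] \times [y_\ell, y_r]$ with $w := x_r - x_\ell = \lw(P)$ and $h := y_r - y_\ell$, and I will use the crude estimate $\Vol_k(P) \leq 2 k^2 w h$. The task then reduces to bounding the product $w h$ from above.

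Since $P$ cannot be realized in $\R \times [-1,1]$ and $kP$ is a lattice polygon, the width satisfies $k w \geq 2k+1$, so $w > 2$. In particular, $P$ contains at least one interior integer vertical line $\{m\}\times\R$. The key input is \cite[Corollary 2.13]{Boh23}, which is also the main tool behind Lemmas \ref{lem:convex_body_without_interior_lattice_points_width} and \ref{convex_body_one_interior_lattice_points_width}: it provides lower bounds for $\lvert P^\circ \cap (\{m\}\times \Z)\rvert$ in terms of $w$ and the vertical extent of the corresponding slice of $P$. Summing these lower bounds over all interior integer vertical lines and comparing with the total count $i$ yields inequalities linking $w$, $h$, and $i$.

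I expect the argument to split naturally into two regimes. In the \emph{wide} regime $w \geq 3$, there are several interior integer vertical lines, each contributing nontrivially many interior lattice points; this forces $h$ to be controlled linearly in $i$ and produces an estimate of the shape $2k^2 w h \leq k^2(4i+5)$. In the \emph{narrow} regime $2 + \tfrac{1}{k} \leq w < 3$, only a bounded number of interior integer vertical lines may exist, so the restriction provided by \cite[Corollary 2.13]{Boh23} constrains $h$ less tightly; here sharpness comes from $kw \in \{2k+1,\dots,3k-1\}$ together with a direct accounting of the single interior slice, producing the second term $\tfrac{1}{2} k (k+2)^2 (i+1)$.

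The main obstacle will be calibrating the numerical constants in the two regimes precisely, and in particular verifying that the narrow-width analysis genuinely yields the factor $(k+2)^2$ rather than a weaker bound; this boils down to a careful geometric counting of interior lattice points along the unique (or few) interior integer vertical slice(s), using convexity of $P$ together with the integrality of $kP$.
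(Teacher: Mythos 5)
There is a genuine gap, and it sits in your very first reduction: the bounding-box estimate \( \Vol_k(P) \leq 2k^2wh \) is too lossy to recover the stated constants. In your wide regime you would need \( 2wh \leq 4i+5 \), but this is false in general: for the lattice triangle \( P = \convhull((0,0),(0,N),(3,0)) \) (denominator \( k=1 \), lattice width \( 3 \) attained in direction \( (1,0) \), hence not realizable in \( \R \times [-1,1] \)) one has \( w = 3 \), \( h = N \), so \( 2wh = 6N \), while \( i(P) \approx N-1 \) gives \( 4i+5 \approx 4N \). The lemma itself holds here, since \( \Vol_1(P) = 3N \leq 4N \), but your intermediate inequality does not: the factor of up to \( 2 \) lost in passing from the polygon to its bounding rectangle (attained exactly by triangles) is fatal. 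The paper avoids this by citing area bounds from \cite{Boh23} directly rather than slice-count lower bounds: Proposition~3.5 there bounds \( \Vol_k(P) \) itself by \( 2k^2(2i+\tfrac{5}{2}) \) once \( P \) meets at least three interior integral lines orthogonal to the width direction, and Proposition~3.3 handles the narrow case. These are different statements from Corollary~2.13 (the one you cite), which the paper uses only for Lemmas~\ref{lem:convex_body_without_interior_lattice_points_width} and~\ref{convex_body_one_interior_lattice_points_width}.

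A second, related problem is the narrow regime. The constant \( \frac{1}{2}k(k+2)^2(i+1) \) does not arise from a direct accounting of interior slices; it arises by writing \( P \subseteq \R \times [1-a,2+b] \) with \( a,b \in (0,1] \) minimal (after putting the width direction vertical), applying \cite[Proposition 3.3]{Boh23} to get \( \Vol_k(P) \leq \frac{(a+b+1)^2}{a+b}(i+1)k^2 \) when \( a+b<1 \), and then minimizing \( a+b \) subject to \( a,b \geq \frac{1}{k} \), which holds because \( P \) is \( k \)-rational and must protrude past both extreme integral lines; the extremal case \( a=b=\frac{1}{k} \) produces \( (k+2)^2/(2k) \). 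Your parametrization by \( kw \in \{2k+1,\dots,3k-1\} \) only records \( a+b \) implicitly and, combined again with the rectangle bound, will not produce the functional form \( \frac{(a+b+1)^2}{a+b} \). So while the case split by width and the appeal to \cite{Boh23} are in the right spirit, the quantitative core of the argument is missing in both regimes.
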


\begin{proof}
	After an affine unimodular tranformation, we can assume that \( (0,1)
	\) is a lattice width direction of \( P \) and \( P \subseteq \R
	\times [0,\infty) \). Let \( m \in \Z \) be minimal such that \( P
	\subseteq \R \times [0,m+1] \). By assumption, we have \( m \geq 2 \). If
	\( m \geq 3 \), \cite[Proposition 3.5]{Boh23} implies
	\[
		\Vol_k(P)
		\leq 2k^2\left(2i+2+\frac{1}{2}\right) = k^2(4i+5)
	\]
	and we are done. Consider \( m = 2 \) and let \( 0 < a,b \leq 1 \) be
	minimal such that \( P \subseteq \R \times [1-a,2+b] \). If \( a+b
	\geq 1 \), \cite[Proposition 3.3]{Boh23} implies again \( \Vol_k(P)
	\leq 4k^2(i+1) \). Otherwise, it gives
	\[
		\Vol_k(P) \leq \frac{(a+b+1)^2}{a+b}(i+1)k^2,
	\]
	which attains its maximum for \( a = b = \frac{1}{k} \), hence we get
	\( \Vol_k(P) \leq \frac{1}{2}k(k+2)^2(i+1) \).
\end{proof}

\begin{prop}
	\label{prp:k_maximal_polygons_from_koelman}
	Let \( P \) be a \( k \)-maximal polygon with \( \denom(P) = k
	\geq 2 \) and \( i := i(P) \) that cannot be realized in \( \R \times [-1,1] \). Then
	there exists a lattice polygon \( Q \) such that \( P =
	\frac{1}{k}Q^{(-1)} \), where
	\[
		l(Q)\ \leq\ \frac{1}{2}\Vol_k(P) - \frac{1}{2}\ \leq\
		\max\left(k^2(2i+\frac{5}{2}),\
		\frac{1}{4}k(k+2)^2(i+1)\right) - \frac{1}{2}.
	\]
\end{prop}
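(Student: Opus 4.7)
The strategy is to take $Q := (kP)^{(1)} = \convhull((kP)^{\circ} \cap \Z^2)$, the interior hull of the lattice polygon $kP$, and apply Proposition~\ref{prp:lattice_polygon_maximal_criterion} to deduce $kP = Q^{(-1)}$, which then yields $P = \frac{1}{k}Q^{(-1)}$. This reduces the existence of $Q$ to two claims: (a) $kP$ is $1$-maximal as a lattice polygon, and (b) $(kP)^{(1)}$ is two-dimensional.

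Claim (a) follows from $k$-maximality: if some lattice polygon $S$ satisfies $kP \subsetneq S$ and $i(S) = i(kP)$, the containment $kP \subseteq S$ together with equality of interior lattice-point counts forces $(kP)^{\circ} \cap \Z^2 = S^{\circ} \cap \Z^2$; intersecting with $k\Z^2 \subseteq \Z^2$ gives $(kP)^{\circ} \cap k\Z^2 = S^{\circ} \cap k\Z^2$, which under $v \mapsto v/k$ translates to $i(P) = i(\frac{1}{k}S)$, and then $\frac{1}{k}S$ is a $k$-rational polygon strictly containing $P$ with the same number of interior lattice points, contradicting $k$-maximality. For claim (b), first observe that the hypothesis forces $\lw(kP) \geq 3$: otherwise, after a unimodular transformation and translation, $kP$ would lie in $\R \times [-1,1]$, and hence $P \subseteq \R \times [-1/k, 1/k] \subseteq \R \times [-1,1]$. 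A case analysis then rules out degenerate interior hulls. If $i(kP) = 0$, then by (a) and the uniqueness statement in the introduction, $kP$ would be the twofold standard triangle, which has $\lw = 2$. If $i(kP) = 1$, then $kP$ is one of the three maximal lattice polygons with exactly one interior lattice point; among these only the threefold standard triangle $T_3$ has $\lw = 3$, but a direct computation shows that for $k \geq 2$ the $\frac{1}{k}$-rational point $(4/k, -2/k) \in \partial P^{(-1)}$ extends $P = \frac{1}{k}T_3$ without introducing new interior lattice points, so $P$ fails to be $k$-maximal by Proposition~\ref{prp:k_maximal_criterion}(ii). If $i(kP) \geq 2$ with all interior lattice points collinear, Koelman's classification~\cite[sec. 4.3]{Koe91} places $kP$ inside $\R \times [-1,1]$, again contradicting $\lw(kP) \geq 3$.

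With (a) and (b) established, Proposition~\ref{prp:lattice_polygon_maximal_criterion} gives $kP = Q^{(-1)}$, so $P = \frac{1}{k}Q^{(-1)}$. For the bound on $l(Q)$, convexity of $(kP)^{\circ}$ gives $Q \subseteq (kP)^{\circ}$, so every lattice point of $Q$ is an interior lattice point of $kP$ and $l(Q) \leq i(kP)$. Applying Pick's formula to $kP$ together with $b(kP) \geq 3$ yields $i(kP) = \frac{1}{2}(\Vol(kP) + 2 - b(kP)) \leq \frac{1}{2}\Vol_k(P) - \frac{1}{2}$, which is the first inequality; the second follows by substituting the volume bound from Lemma~\ref{lem:volume_bound_not_m1p1}. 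The main difficulty lies in the case analysis in (b), particularly the verification that the threefold standard triangle cannot arise as $kP$ for a $k$-maximal $P$ with $k \geq 2$.
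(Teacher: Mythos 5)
Your proposal is correct and follows essentially the same route as the paper: set \( Q := (kP)^{(1)} \), establish that \( kP \) is \( 1 \)-maximal and that \( Q \) is two-dimensional, invoke Proposition~\ref{prp:lattice_polygon_maximal_criterion} to get \( P = \frac{1}{k}Q^{(-1)} \), and combine \( l(Q) = i(kP) \) with Pick's formula and Lemma~\ref{lem:volume_bound_not_m1p1}. The only difference is that you supply explicit arguments for the two steps the paper merely asserts (the \( 1 \)-maximality of \( kP \) and the two-dimensionality of \( Q \)); your case analysis for the latter is correct but could be shortened, since the hypothesis actually gives \( \lw(kP) > 2k \geq 4 \), which rules out the twofold standard triangle, the threefold standard triangle and the collinear case at once without the explicit non-maximality computation for \( \frac{1}{k}(3\Delta) \).
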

\begin{proof}
	Since \( P \) is \( k \)-maximal, the lattice polygon \( kP \)
	is \( 1 \)-maximal by
	Proposition~\ref{prp:k_maximal_criterion}. Set \( Q :=
	(kP)^{(1)} \). Since \( P\) cannot be realized in \( \R \times [-1,1] \) and \( k \geq
	2 \), it follows that \( Q \) is two-dimensional. Thus
	Proposition~\ref{prp:lattice_polygon_maximal_criterion}
	implies \( P = \frac{1}{k}Q^{(-1)} \). Since \( l(Q) = i(kP)
	\), the bound for \( l(Q) \) follows with Pick's formula and
	then Lemma~\ref{lem:volume_bound_not_m1p1}.
\end{proof}

This proposition tells us that if we know all lattice polygons with
sufficiently many lattice points, we can use them to classify \( k
\)-maximal polygons with \( i \in \Z_{\geq 0} \) interior lattice
points (those in \( \R \times [-1,1] \) can be classified directly
using Algorithm~\ref{algo:classify_maximal_polygons_m1p1}). A
classification algorithm for lattice polygons by number of lattice
points was given by Koelman \cite[Section 4.4]{Koe91}. How far we
would have to know this classification for is given by the bound in
Proposition~\ref{prp:k_maximal_polygons_from_koelman}, i.e.
\[
	l_{\max}(k,i)\ :=\
	\left\lfloor
	\max\left(k^2(2i+\frac{5}{2}),\
	\frac{1}{4}k(k+2)^2(i+1)\right) - \frac{1}{2}
	\right\rfloor.
\]
Below are some values for \( l_{\max}(k,i) \), where \( k \) runs
vertically and \( i \) horizontally. Note however that this bound is
not sharp and can likely be improved.
\begin{center}
	\begin{tabular}{c|ccccccccc}
		\( l_{\max}(k,i) \) & 0   & 1   & 2   & 3   & 4   & 5   & 6   & 7    & 8
		\\\hline
		2                   & 9   & 17  & 25  & 33  & 41  & 49  & 57  & 65   & 73   \\
		3                   & 22  & 40  & 58  & 76  & 94  & 112 & 130 & 149  & 168  \\
		4                   & 39  & 71  & 107 & 143 & 179 & 215 & 251 & 287  & 323  \\
		5                   & 62  & 122 & 183 & 244 & 305 & 367 & 428 & 489  & 550  \\
		6                   & 95  & 191 & 287 & 383 & 479 & 575 & 671 & 767  & 863  \\
		7                   & 141 & 283 & 424 & 566 & 708 & 850 & 991 & 1133 & 1275
	\end{tabular}
\end{center}

We have implemented Koelman's classification algorithm in
\texttt{RationalPolygons.jl} \cite{RationalPolygons_jl}, in particular reproducing his results
up to \( l = 42 \) and extending it up to \( l = 112 \). In total, we
have found 115\,449\,011\,813 lattice polygons with at most \( 112 \)
lattice points, 79\,140\,459 of which are internal, i.e. of the form
\( P^{(1)} \). We made the internal ones as well as all polygons up to
70 lattice points available at~\cite{springer_2024_13959996}, see
also~\cite[A371917]{oeis} for their numbers. Using
Proposition~\ref{prp:k_maximal_polygons_from_koelman}, we then
obtained the classification of \( 3 \)-maximal polygons with up to
five and \( 4 \)-maximal polygons with up to two interior lattice
points, see Theorem~\ref{thm:main_classification}. While this
approach works in principle for all pairs \( (k,i) \), it is very
inefficient compared to our direct classifications for \( i \in
\{0,1\} \) in
Sections~\ref{sec:rational_polygons_without_interior_lattice_points}
and~\ref{sec:rational_polygons_with_exactly_one_interior_lattice_points}
as well as \( k = 2 \) in Section~\ref{sec:half_integral_polygons}.
However, it did prove useful as an independent verification of our
results in these sections for low values of \( k \) and \( i \)
respectively. 

\printbibliography

\end{document}